\tikzset{shiftarr/.style={
        rounded corners,%
        to path={--([#1]\tikztostart.center)
                     -- ([#1]\tikztotarget.center) \tikztonodes
                     -- (\tikztotarget)},
}}
\renewcommand{\sectionautorefname}{Section}%
\renewcommand{\subsectionautorefname}{Section}%
\renewcommand{\subsubsectionautorefname}{Section}%
\newcommand{\resetCurThmBraces}{%
\gdef\curThmBraceOpen{(}%
\gdef\curThmBraceClose{)}}
\newcommand{\removeThmBraces}{%
\gdef\curThmBraceOpen{}%
\gdef\curThmBraceClose{}}
\patchcmd{\thmhead}{(#3)}{\curThmBraceOpen #3\curThmBraceClose }{}{}
\setlist[enumerate,1]{label=(\arabic*),font=\normalfont,align=left,leftmargin=0pt,labelindent=0pt,listparindent=\parindent,labelwidth=0pt,itemindent=!,topsep=3pt,parsep=0pt,itemsep=3pt,start=1}
\setlist[enumerate,2]{label=(\alph*),font=\normalfont,labelindent=*,leftmargin=*,topsep=3pt,start=1}
\setlist[itemize]{labelindent=*,leftmargin=*,topsep=5pt,itemsep=3pt}
\setlist[description]{labelindent=*,leftmargin=*,itemindent=-1 em}
\newcommand{\defaultshowkeysformat}[1]{%
\StrSubstitute{#1}{ }{\textvisiblespace}[\TEMP]%
\parbox[t]{\marginparwidth}{\raggedright\normalfont\small\ttfamily\(\{\){\color{red!50!black}\expandafter\seqsplit\expandafter{\TEMP}}\(\}\)}%
}
\renewcommand*\showkeyslabelformat[1]{%
\noexpandarg%
\defaultshowkeysformat{#1}%
}
\numberwithin{equation}{section}
\newcommand{\takeout}[1]{\empty}
\newcommand{\Eq}{\mathop{\mathit{Eq}}}
\newcommand{\into}{\hookrightarrow}
\newcommand{\isdef}{\mathord{\downarrow}}
\newcommand{\RelAx}{\mathcal{A}}
\newcommand{\Vars}{\mathsf{Var}}
\newcommand{\rimpl}{\implies} 
\DeclareMathOperator{\Str}{\mathbf{Str}} 
\newcommand{\Rels}{\Pi}
\newcommand{\power}{\pitchfork} \newcommand{\CJ}{\mathcal{J}}
\newcommand{\BC}{\mathbf{C}} \newcommand{\BV}{\mathbf{V}}
\newcommand{\infrule}[2]{\frac{#1}{#2}}
\newcommand{\arity}{\mathsf{ar}} 
 \newcommand{\EM}{\mathsf{EM}}
 \newcommand{\id}{\mathsf{id}}
\renewcommand{\ell}{\mathsf{l}} \newcommand{\Set}{\mathbf{Set}}
\newcommand{\EAr}{\mathsf{E\text{-}Ar}}
\newcommand{\IAr}{\mathsf{I\text{-}Ar}}
\newcommand{\V}{\mathcal{V}}
\newcommand{\bbT}{\mathbb{T}}
\newcommand{\T}{\mathcal{T}}
\DeclareMathOperator{\card}{\mathsf{card}}
\newcommand{\Pos}{\textbf{Pos}}
\newcommand{\Met}{\textbf{Met}}
\newcommand{\E}{\mathcal{E}}
\newcommand{\colim}{\mathop{\mathsf{colim}}}
\newcommand{\Pres}{\mathsf{Pres}}
\newcommand{\TSigma}{T_\Sigma}
\DeclareMathOperator{\Alg}{\mathsf{Alg}}  
\newcommand{\AlgSigma}{\Alg \Sigma}
\newcommand{\CatC}{\mathscr{C}}
\newcommand{\Horn}{\mathscr{H}}
\newcommand{\pres}{\mathscr{P}}
\newcommand{\Edge}{\mathsf{E}}
\newcommand{\edge}{e}
\newcommand{\Refl}{R}
\newcommand{\ev}{\mathsf{ev}}
\newcommand{\Mnd}{\mathsf{Mnd}}
\newcommand{\sub}{\mathsf{sub}}
\newcommand{\Term}{\mathscr{T}}
\newcommand{\F}{\mathscr{F}}
\newcommand{\Lim}{\mathsf{lim}}
\newcommand{\CMS}{\mathbf{CMS}}
\theoremstyle{plain}
\newtheorem{thm}{Theorem}[section]
\newtheorem{lem}[thm]{Lemma}
\newtheorem{propn}[thm]{Proposition}
\newtheorem{cor}[thm]{Corollary}
\theoremstyle{definition}
\newtheorem{defn}[thm]{Definition}
\newtheorem{expl}[thm]{Example}
\newtheorem{rem}[thm]{Remark}
\newtheorem{notn}[thm]{Notation}
\newtheorem{assn}[thm]{Assumption}
\newcommand{\Id}{\mathsf{Id}}
\newcommand{\subto}{\hookrightarrow}
\newcommand{\inl}{\mathsf{inl}}
\newcommand{\inr}{\mathsf{inr}}
\newcommand{\xra}[1]{\xrightarrow{~#1~}}
\newcommand{\Q}{\mathds{Q}}
\title{Monads on Categories of Relational Structures}
\author{Chase Ford\footnote{Chase Ford acknowledges support by the Deutsche Forschungsgemeinschaft (DFG) as part of the Research and Training Group 2475 ``Cybercrime and Forensic Computing'' (393541319/GRK2475/1-2019)},
Stefan Milius\footnote{Stefan Milius acknowledges by the Deutsche Forschungsgemeinschaft (DFG) under projects MI 717/5-2 and MI 717/7-1}, 
and Lutz Schr\"{o}der}{Friedrich-Alexander-Universit\"{a}t Erlangen-N\"{u}rnberg, Germany}{}{}{
}
\authorrunning{C. Ford and S. Milius and L. Schr\"{o}der} 
\begin{document}
\maketitle

\begin{abstract}
  We introduce a framework for universal algebra in categories of
  relational structures given by finitary relational signatures and
  finitary or infinitary Horn theories, with the arity~$\lambda$ of a
  Horn theory understood as a strict upper bound on the number of
  premisses in its axioms; key examples include partial orders
  ($\lambda=\omega$) or metric spaces ($\lambda=\omega_1$). We
  establish a bijective correspondence between $\lambda$-accessible
  enriched monads on the given category of relational structures and a
  notion of $\lambda$-ary \emph{algebraic theories} (i.e.~with
  operations of arity~\mbox{$<\lambda$}), with the syntax of algebraic
  theories induced by the relational signature (e.g.~inequations or
  equations-up-to-$\epsilon$).  We provide a generic sound and
  complete derivation system for such \emph{relational algebraic
    theories}, thus in particular recovering (extensions of) recent
  systems of this type for monads on partial orders and metric spaces
  by instantiation. In particular, we present an $\omega_1$-ary
  algebraic theory of metric completion. The theory-to-monad direction
  of our correspondence remains true for the case of $\kappa$-ary
  algebraic theories and $\kappa$-accessible monads for $\kappa<\lambda$,
  e.g.~for finitary theories over metric spaces.
\end{abstract}
\renewcommand{\sectionautorefname}{Section}%
\renewcommand{\subsectionautorefname}{Section}%
\renewcommand{\subsubsectionautorefname}{Section}%
%
%
\section{Introduction}

\noindent Monads play an established role in the semantics of
sequential and concurrent programming~\cite{Moggi91} -- they
encapsulate side-effects, such as statefulness, nontermination,
nondeterminism, or probabilistic branching. The well-known
correspondence between monads on the category of sets and algebraic
theories~\cite{Linton66} impacts accordingly on programming syntax, as
witnessed, for example, in work on algebraic effects~\cite{PlotkinPower01}:
Operations of the theory serve as syntax for computational effects
such as non-deterministic or probabilistic choice. The comparative
analysis of programs or systems beyond two-valued equivalence
checking, e.g.~under behavioural preorders, such as similarity, or
behavioural distances, involves monads based on categories beyond
sets, such as the categories~$\Pos$ of partial orders or~$\Met$ of
($1$-bounded) metric spaces. This has sparked recent interest in
presentations of such monads using suitable variants of the notion of
algebraic theory. While it is, in principle, possible to work with
equational presentations that encapsulate the additional structure
within the signature~\cite{KP93}, it seems at least equally natural to
represent the additional structure (e.g.~distance or ordering) within
the judgements of the theory. Indeed, Mardare, Power, and Plotkin
replace equations with equations-up-to-$\epsilon$ in their
\emph{quantitative algebraic theories}~\cite{MPP16}, which present
monads on~$\Met$, and in our own previous work on behavioural
preorders~\cite{FordEA21} as well as in our own recent work with
Ad\'amek~\cite{AFMS20}, we have used \emph{in}equational theories 
to present monads on~$\Pos$.

In the present paper, we introduce a generalized approach to such
notions of algebraic theory: We work in categories of finitary
relational structures (more precisely, the objects are sets
interpreting a given signature of finitary relation symbols),
axiomatized by Horn theories whose axioms are implications with
possibly infinite sets of antecedents. We say that such a theory is
$\lambda$-ary for a regular cardinal~$\lambda$ if all its axioms have
less than~$\lambda$ antecedents. For instance,~$\Pos$ can be presented
by a finitary (i.e.~$\omega$-ary) Horn theory over a binary
relation~$\le$, and~$\Met$ by an $\omega_1$-ary Horn theory over
binary relations~$=_\epsilon$ `equality up to~$\epsilon$' indexed over
rational numbers $\epsilon$. We exploit that the models of a
$\lambda$-ary Horn theory form a locally $\lambda$-presentable
category~$\CatC$~\cite{AR94} to give a syntactic characterization of
$\lambda$-accessible monads on~$\CatC$ in terms of a notion of
relational algebraic theory, in the sense that we prove a monad-theory
correspondence. Following Kelly and Power~\cite{KP93}, we use
$\lambda$-presentable objects of~$\CatC$ as arities and as contexts of
axioms; however, as indicated above, we provide a syntax%
\smnote{Saying `their syntax' goes too far, as they don't have
  one. They hint at something that could be be worked out in case of
  working over concrete categories, see p.~176 of their paper.}  by
expressing axioms using the relational signature instead of
necessarily using only equality. We give a sound and complete
deduction system for the arising \emph{relational logic} (which
generalizes standard equational logic), thus obtaining an
explicit description of the monad generated by a relational algebraic
theory in the indicated sense. One consequence of our main result is
that quantitative algebraic theories~\cite{MPP16} induce
$\omega_1$-accessible monads.%
\smnote{I think this is not worked out in the paper; at least I do not
  see where.}  More generally, presentations of $\omega_1$-presentable
monads in our formalism may involve operations with countable
non-discrete arities: indeed, we present an $\omega_1$-ary
relational algebraic theory that defines the metric completion monad.
We also take a glimpse at the more involved setting of
$\kappa$-accessible monads on~$\CatC$ where $\kappa<\lambda$ 
(e.g.~finitary monads on~$\Met$). We give a partial characterization of
$\kappa$-presentable objects in this setting, and show that while the
monad-to-theory direction of our correspondence fails for
$\kappa<\lambda$, the theory-to-monad direction does hold. This 
implies that some salient quantitative algebraic theories induce finitary
monads; e.g. the theory of quantitative join-semilattices~\cite{MPP16}.

\subparagraph*{Related Work}

%
We have already mentioned work by Kelly and Power on finitary
monads~\cite{KP93} and by Mardare et al.~on quantitative algebraic
theories~\cite{MPP16}, as well as our own previous
work~\cite{FordEA21} and our joint work with Ad\'amek~\cite{AFMS20}.

Power and Nishizawa~\cite{NP09} have extended the approach of Kelly
and Power to deal with different enrichments of a category and the
monads thereon, and obtain a correspondence between enriched 
Lawvere theories~\cite{Power99} and finitary enriched monads. More 
recently, Power and Garner~\cite{PG18} have provided a more thorough
understanding of the equivalence between enriched finitary monads and
enriched Lawvere theories as an instance of a free completion of an
enriched category under a class of absolute colimits.
Rosick\'y~\cite{Rosicky21} establishes a monad-theory correspondence
for $\lambda$-accessible enriched monads and a notion of $\lambda$-ary 
enriched theory \emph{\'{a} la} Linton~\cite{Linton69}, where arities of 
operations are given by pairs of objects. Like in the setting of Kelly and 
Power, relations (inequations, distances) are encoded in the arities. So the
syntactic notion of theory is different from (and more abstract than)
ours. Lucyshyn-Wright~\cite{LucyshynWright16} establishes a rather
general correspondence between monads and abstract theories in
symmetric monoidal closed categories, parametric in a choice of
arities, which covers several notions of theory and correspondences in
the categorical literature under one roof.

Kurz and Velebil~\cite{KV17} characterize classical ordered
varieties~\cite{Bloom76} (which are phrased in terms of inequalities)
as precisely the exact categories in an enriched sense with a
`suitable' generator. In recent subsequent work, Ad\'amek at
al.~\cite{ADV21} establish a correspondence of these varieties with
enriched monads on $\Pos$ that are strongly finitary~\cite{KL93},
i.e.~their underlying functor is a left Kan-extension of the embedding
of finite discrete posets into $\Pos$.

The main distinguishing feature of our work relative to the above is
the explicit syntactic description of the monad obtained from a theory
via a sound and complete derivation system.

\section{Preliminaries}

We review the basic theory of locally presentable categories
(see~\cite{AR94} for more detail) and of monads. We assume a modest
familiarity with the elementary concepts of category
theory~\cite{AHS90} and with ordinal and cardinal
numbers~\cite{Jech03}. We write $\card X$ for the cardinality of a set
$X$ and, where $\kappa$ is a cardinal, we write
$X'\subseteq_{\kappa} X$ to indicate that $X'\subseteq X$ and
$\card X<\kappa$. 

\subparagraph*{Locally Presentable Categories}

Fix a regular cardinal $\lambda$ (i.e.~an infinite cardinal which is
not cofinal to any smaller cardinal). A poset $(I, \leq)$ is
$\lambda$-\emph{directed} if each subset $I_0\subseteq_{\lambda} I$
has an upper bound: there exists $u\in I$ such that $i\leq u$ for
all $i\in I_0$. A $\lambda$-\emph{directed diagram} is a
functor whose domain is a $\lambda$-directed poset (viewed as a
category); colimits of such diagrams are also called
\emph{$\lambda$-directed}. An object $X$ in a category $\CatC$ is
\emph{$\lambda$-presentable} if the covariant hom-functor
$\CatC(X, -)$ preserves $\lambda$-directed colimits.  That is, $X$ is
$\lambda$-presentable if for each directed colimit
$(D_i\xrightarrow{c_i} C)_{i\in I}$ in $\CatC$, every morphism
$m\colon X\to C$ factors through one of the $c_i$ essentially
uniquely: there exists $i\in I$ and $g\colon X\to D_i$ such that
$m=c_i\cdot g$, and for all $g'\colon X\to D_i$ such that
$m=c_i\cdot g',$ there exists $j\geq i$ such that
$D(i\to j)\cdot g=D(i\to j)\cdot g'$.

\begin{defn}
  A category $\CatC$ is \emph{locally $\lambda$-presentable} if it is
  cocomplete, its full subcategory $\Pres_{\lambda}(\CatC)$ given by
  the $\lambda$-presentable objects of $\CatC$ is essentially small,
  and every $C\in\CatC$ is a $\lambda$-directed colimit of objects in
  $\Pres_{\lambda}(\CatC).$ When $\lambda=\omega$ (resp.~$\omega_1$),
  we speak of \emph{locally finitely (resp.~countably) presentable
  categories}. We call $\CatC$ \emph{locally presentable} if it is
  locally $\lambda$-presentable for some cardinal $\lambda$.  A
  functor $F$ on a locally presentable category is
  \emph{$\lambda$-accessible} if it preserves $\lambda$-directed
  colimits. When $\lambda=\omega$ or $\omega_1$, we speak of
  \emph{finitary} and \emph{countably accessible functors}, 
  respectively.
\end{defn}

\subparagraph*{Reflective subcategories} A full subcategory~$\CatC'$
of a category~$\CatC$ is \emph{reflective} if the embedding
$\iota\colon\CatC'\hookrightarrow\CatC$ is a right adjoint. In this
case, we write $r_X\colon X\to\Refl X$ (or just~$r$ if~$X$ is clear
from the context) for the universal arrows; we call $\Refl X$
\emph{the reflection of $X\in\CatC$},~$r_X$ the \emph{reflective
arrow}, and the left adjoint~$\Refl$ the \emph{reflector}. The 
universal property of  $r_X\colon X\to\Refl X$ is as follows:
For each morphism $f\colon X\to Y$ in $\CatC$ where $Y$ lies in
$\CatC'$, there exists a unique morphism $f^\sharp\colon\Refl X\to Y$
such that $f=f^\sharp\cdot r_X$.  We call $\CatC'$
\emph{epi-reflective} if $r_X$ is epi for all $X\in\CatC$. We
will employ the following reflection theorem:

\removeThmBraces
\begin{thm}[{\cite[Cor.~2.48]{AR94}}]\label{T:reflection}%
  If~$\CatC'$ is a full subcategory of a locally $\lambda$-presentable
  category~$\CatC$ and~$\CatC'$ is closed under limits and
  $\lambda$-directed colimits in~$\CatC$, then~$\CatC'$ is reflective
  and locally $\lambda$-presentable.
\end{thm}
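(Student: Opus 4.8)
The plan is to obtain reflectivity of~$\CatC'$ from the General Adjoint Functor Theorem applied to the inclusion $\iota\colon\CatC'\into\CatC$, and then to read off local $\lambda$-presentability essentially for free. Since~$\CatC$ is locally $\lambda$-presentable it is complete and well-powered; as~$\CatC'$ is closed under limits, $\CatC'$ is complete, limits in~$\CatC'$ are computed as in~$\CatC$, and~$\iota$ preserves limits. Given this, the only remaining hypothesis to verify is the solution-set condition: for each $X\in\CatC$ a \emph{set} of morphisms $f_i\colon X\to C_i$ with $C_i\in\CatC'$ such that every morphism $f\colon X\to C$ with $C\in\CatC'$ factors as $f=g\cdot f_i$ for some~$i$ and $g\colon C_i\to C$. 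From such a set one forms the product $W_X=\prod_i C_i\in\CatC'$ with the evident morphism $w_X\colon X\to W_X$, which is a weakly initial object of the comma category $X\downarrow\iota$; then, following Freyd, one takes $\Refl X$ to be the least subobject of~$W_X$ in~$\CatC$ that lies in~$\CatC'$ and through which $w_X$ factors. Such a least subobject exists because~$\CatC$ is well-powered, and it lies in~$\CatC'$ because it is an intersection, hence a limit in~$\CatC$, of objects of~$\CatC'$ (together with~$W_X$). Its universal property is checked in the standard way, the uniqueness part using that the equalizer of two competing morphisms out of~$\Refl X$ is again a $\CatC'$-subobject of~$W_X$ through which $w_X$ factors, hence all of~$\Refl X$ by minimality. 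Once~$\CatC'$ is reflective with reflector~$\Refl$, local $\lambda$-presentability is formal: $\CatC'$ is cocomplete (being reflective in the cocomplete category~$\CatC$), and since~$\iota$ preserves $\lambda$-directed colimits, each~$\Refl P$ with $P\in\Pres_\lambda(\CatC)$ is $\lambda$-presentable in~$\CatC'$; these objects form an essentially small family; and for any $C\in\CatC'$ one has $C\cong\Refl\iota C\cong\colim_{(P\to\iota C)}\Refl P$, a $\lambda$-directed colimit in~$\CatC'$, since~$\Refl$ preserves colimits and~$\iota C$ is the $\lambda$-directed colimit of its canonical diagram of $\lambda$-presentable objects. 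The standard recognition criterion for locally $\lambda$-presentable categories~\cite{AR94} then applies.

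The entire substance therefore lies in the solution-set condition at a fixed $X\in\CatC$. The naive attempt is to choose a regular cardinal $\kappa\geq\lambda$ with~$X$ $\kappa$-presentable, write a given $C\in\CatC'$ as a $\kappa$-directed colimit of $\kappa$-presentable objects of~$\CatC$, and factor $f\colon X\to C$ through one of them; but that object need not lie in~$\CatC'$, since the $\lambda$-presentable pieces of~$C$ need not lie in~$\CatC'$, so closure of~$\CatC'$ under $\lambda$-directed colimits does not help. The remedy uses both closure hypotheses together with a sufficiently large regular cardinal $\mu>\lambda$: one shows, by a L\"{o}wenheim--Skolem-style argument, that every $C\in\CatC'$ is the $\mu$-directed colimit of those of its subobjects that are $\mu$-presentable in~$\CatC$ \emph{and} lie in~$\CatC'$. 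Such subobjects are produced by a transfinite chain of length~$\mu$ which at successor stages enlarges the current $\mu$-presentable subobject by a $\mu$-small amount so as to absorb the witnesses — expressible through $\lambda$-small limit diagrams in~$\CatC$, where closure of~$\CatC'$ under limits is used — for membership in~$\CatC'$, and at limit stages takes the union, which remains in~$\CatC'$ by closure under $\lambda$-directed colimits. Choosing~$\mu$ moreover with~$X$ $\mu$-presentable, every $f\colon X\to C$ with $C\in\CatC'$ then factors through one of these $\mu$-presentable objects of~$\CatC'$, and the morphisms $X\to C^*$ with~$C^*$ ranging over a skeleton of the $\mu$-presentable objects of~$\CatC$ that lie in~$\CatC'$ form a solution set.

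The main obstacle is precisely this last point: controlling the size of $\CatC'$-subobjects, i.e.\ showing that~$\CatC'$ has enough small objects of its own — equivalently, that~$\CatC'$ is an accessible category. An alternative organization makes this explicit: first prove that~$\CatC'$ is accessible and accessibly embedded (using its closure under $\lambda$-directed colimits and under limits; for instance the latter forces closure under retracts, since a retract of $B\in\CatC'$ is the equalizer in~$\CatC$ of the corresponding idempotent and~$\id_B$, and more generally feeds into the structure theory of accessible categories of~\cite{AR94}); then use that an accessible category which is complete is locally presentable, and that an accessible limit-preserving functor between locally presentable categories has a left adjoint, applied to~$\iota$. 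Either way, the real work is the accessibility/L\"{o}wenheim--Skolem argument — the part of~\cite{AR94} on which the cited statement rests — and everything else is bookkeeping.
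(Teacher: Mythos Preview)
The paper does not prove this statement; it merely cites it as~\cite[Cor.~2.48]{AR94} and uses it as a black box, so there is no ``paper's own proof'' to compare against. Your sketch is a reasonable outline of how the result is actually obtained in~\cite{AR94}: the substantive content is indeed the accessibility of~$\CatC'$ (equivalently, the L\"owenheim--Skolem/downward closure argument producing enough small objects of~$\CatC'$), after which reflectivity follows from the adjoint functor theorem for accessible functors between locally presentable categories, and local $\lambda$-presentability of~$\CatC'$ is the formal consequence you describe. One small inaccuracy: in the transfinite construction you allude to, closure under \emph{limits} is not really what is used at successor stages; rather, the argument in~\cite{AR94} goes via the characterization of full subcategories closed under limits and $\lambda$-directed colimits as orthogonality classes (or via injectivity/small-object-argument techniques), and it is the $\lambda$-ary nature of the defining conditions that lets one close off under them while staying $\mu$-presentable. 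But this is a matter of emphasis in a sketch; the overall shape and identification of the main obstacle are correct.
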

\resetCurThmBraces

\subparagraph*{Monads}

A \emph{monad} on a category $\CatC$ is a functor
$T\colon\CatC\to\CatC$ equipped with natural transformations
$\eta\colon \Id\to T$ (the \emph{unit}) and $\mu\colon TT\to T$ (the
\emph{multiplication}) such that the diagrams below commute.
\[
\begin{tikzcd}
T \arrow[rd, "\id"'] \arrow[r, "T\eta"] & TT \arrow[d, "\mu"] 
& T \arrow[ld, "\id"] \arrow[l, "\eta T"'] &  
& TTT \arrow[rr, "T\mu"] \arrow[d, "\mu T"'] &  & TT \arrow[d, "\mu"] \\
& T                    &            &  & TT \arrow[rr, "\mu"]        &  & T                   
\end{tikzcd}
\]
We call the monad $(T, \eta, \mu)$ $\lambda$-\emph{accessible}
if its underlying functor is $\lambda$-accessible.
\takeout{        
We denote the category of monads on the category $\CatC$ by
$\Mnd(\CatC)$. Whenever $\CatC$ is locally presentable, we use
$\Mnd_{\kappa}(\CatC)$ to denote the full subcategory of $\Mnd(\CatC)$
given by the $\kappa$-accessible monads on $\CatC$, i.e.  those monads
on $\CatC$ whose underlying functor is $\kappa$-accessible.
}

\begin{defn}
An \emph{Eilenberg-Moore algebra} for the monad
$(T, \eta, \mu)$ is a $\CatC$-morphism of the shape 
$a\colon TX\to X$ satisfying the following coherence 
laws:
\[
\begin{tikzcd}
X \arrow[rd, "\id"'] \arrow[r, "\eta"] & TX \arrow[d, "a"] &  
& TTX \arrow[d, "Ta"'] \arrow[rr, "\mu"] &  & TX \arrow[d, "a"] \\
& X                 &  & TX \arrow[rr, "a"]                      &  & X                
\end{tikzcd}
\]
A \emph{homomorphism} from $a\colon TX\to X$ to an Eilenberg-Moore
algebra $b\colon TY\to Y$ is a morphism $h\colon X\to Y$ in $\CatC$
such that $h\cdot a=Th\cdot b$. \takeout{We write $\EM(T)$ for the category of
Eilenberg-Moore algebras and their homomorphisms.\cfnote{We never
use this notation, but we do use Eilenberg-Moore algebras in discussion.}}
\end{defn}
\begin{notn}
  For a functor $F\colon\CatC\to\CatC$, we write $\Alg F$ for the
  category of $F$-algebras and homomorphisms, i.e. $\Alg F$ has
  $\CatC$-morphisms of the shape $a\colon FA\to A$ as objects, and 
  a homomorphism $(A, a)\to(B, b)$ is a $\CatC$-morphism 
  $h\colon A\to B$ such that $h\cdot a= b\cdot Fh$. 
\end{notn}

\section{Categories of Relational Structures}\label{sec:relationalstructures}

\noindent As indicated previously, we will study monads over base
categories consisting of (single-sorted) relational
structures. Specifically, we will restrict the relational signature to
be finitary but allow infinitary Horn axioms. We proceed to recall
basic definitions, examples, and results, in particular on closed
structure and (local) presentability. In~\autoref{S:presentableobjects}, 
we present new results on the partial characterization of (internally) 
$\lambda$-presentable objects in cases where the overall local 
presentability index of the category is greater than~$\lambda$.
\begin{defn}
\begin{enumerate}
\item A \emph{relational signature} is a set $\Rels$ of \emph{relation
    symbols} $\alpha,\beta,\dots$ together with a finite \emph{arity}
  $0<\arity(\alpha)\in\omega$ for all $\alpha\in\Rels$. A
  $\Rels$-\emph{edge} in a set $S$ is a pair $\edge=\alpha(f)$ where
  $\alpha\in\Rels$ and $f\colon\arity(\alpha)\to S$ is a function. For
  a map $g\colon S\to Y$, we write $g\cdot\edge=\alpha(g\cdot f)$. We
  extend this notation pointwise to sets~$E$ of edges:
  $g\cdot E=\{g\cdot e\mid e\in E\}$.

\item A \emph{$\Rels$-structure} $X$ consists of an underlying set
  $|X|$ (or just~$X$ when no confusion is likely) and a set $\Edge(X)$
  of $\Rels$-edges in $|X|$. If $\alpha(f)\in\Edge(X)$, we write
  $\alpha_X(f)$ or even $X\models\alpha(f)$.

\item A \emph{relation-preserving map} or briefly a \emph{morphism}
  from $X$ to a $\Rels$-structure $Y$ is a function
  $g\colon |X|\to |Y|$ such that
  $g\cdot\Edge(X)\subseteq\Edge(Y)$. If~$g$ additionally is injective
  and \emph{relation-reflecting}, i.e.~whenever $g\cdot e\in\Edge(Y)$
  for an edge~$e$, then~$e\in\Edge(X)$, then~$g$ is an
  \emph{embedding}. We denote by $\Str(\Rels)$ the category of
  $\Rels$-structures and relation-preserving maps.
\end{enumerate}
\end{defn}

\begin{notn}
  Given an edge $\alpha(f)$ such that $f(i):= x_i$ for all $i\in\arity(\alpha)$, 
  we sometimes write $\alpha(x_1,\dots, x_{\arity(\alpha)})$ (or just 
  $\alpha(x_i)$) instead of $\alpha(f)$. We will pass between these 
  presentations without further mention.
\end{notn}

\noindent 
We are going to carve out full subcategories of $\Str(\Rels)$ by
means of infinitary Horn axioms, whose \textbf{syntax} we recall next.
\begin{defn}
  Let~$\Rels$ be a relational signature, and~$\lambda$ a regular
  cardinal. We fix a set $\Vars$ of variables such that
  $\card(\Vars)=\lambda$. A \emph{$\lambda$-ary Horn formula}
  over~$\Rels$ has the form
  \[
    \Phi\rimpl\psi
  \]
  where $\Phi$ is a set of $\Rels$-edges in $\Vars$ such that 
  $\card\Phi<\lambda$ and $\psi$ is a
  $\Rels\cup\{=\}$-edge in $\Vars$, for a fresh binary relation
  symbol~$=$. In case $\Phi=\{\varphi_1,\dots, \varphi_n\}$
  is finite, we write $\varphi_1,\dots, \varphi_n\rimpl\psi$, and
  if~$\Phi=\emptyset$, then we just write $\rimpl\psi$. A 
  \emph{$\lambda$-ary Horn theory} $\Horn=(\Rels, \RelAx)$ consists 
  of a relational signature~$\Rels$ and a set~$\RelAx$ of $\lambda$-ary 
  Horn formulae over~$\Rels$, the \emph{axioms} of~$\Horn$.
\end{defn} 
\noindent \emph{We fix a $\lambda$-ary Horn theory
  $\Horn=(\Rels,\RelAx)$ for the rest of the paper.}
%
%
We define the \textbf{semantics} of Horn formulae in a
$\Rels$-structure $X$ as follows. 
We denote by $\overline{X}$ the
$\Rels\sqcup\{=\}$-structure obtained from $X$ by putting
$=_{X}:=\{(x, x)\mid x\in X\}$.
A \emph{valuation} is a map $\kappa\colon\Vars\to|X|$. 
We say that~$X$ \emph{satisfies} a Horn formula $\Phi\rimpl\psi$ if
whenever~$\kappa$ is a valuation such that $X\models\kappa\cdot\phi$
for all $\phi\in\Phi$, then $\overline{X}\models\kappa\cdot\psi$. Finally,~$X$ is
a \emph{model} of~$\Horn$, or of~$\RelAx$, if~$X$ satisfies all axioms
of~$\Horn$. The full subcategory of $\Str(\Rels)$ spanned
by the models of~$\RelAx$ is $\Str(\Rels, \RelAx)$ (or $\Str \Horn$).

We have an obvious notion of \emph{derivation} under~$\Horn$
\emph{over} a given set~$Z$ (e.g.~of variables or points in a
structure): We extend~$\Horn$ to $(\Rels\cup\{=\},\bar{\RelAx})$
where~$\bar{\RelAx}$ consists of the axioms in~$\RelAx$ and additional
axioms stating that~$=$ is an equivalence and that all relations
in~$\Rels$ are closed under~$=$ in the obvious sense. Then we have a
single ($\lambda$-ary) derivation rule for application of Horn
axioms~$(\Phi\rimpl\psi)\in\bar{\RelAx}$ over~$Z$:
\begin{equation*}
  \infrule{\kappa\cdot\Phi}{\kappa\cdot\psi}\;(\kappa\colon\Vars\to Z).
\end{equation*}
We say that a set~$E$ of edges over~$Z$ \emph{entails} an edge $\edge$
\emph{over~$Z$} (\emph{under~$\Horn$}) if~$\edge$ is derivable from
edges in~$E$ in this system. In case~$Z=\Vars$ and $\card E<\lambda$,
the expression $E\rimpl e$ is in fact a Horn formula, and we then also
say that~$\Horn$ \emph{entails} $E\rimpl\edge$ if~$E$ entails~$\edge$.

\begin{assn}\label{assn:equality}
  For technical convenience, we assume that the fixed Horn theory
  $\Horn=(\Rels,\RelAx)$ \emph{expresses equality}. That is, there
  exists a set $\Eq(x,y)$ of $\Pi$-edges in variables $x,y$ such
  that~$\Horn$ entails $\Eq(x,y)\rimpl x= y$ as well as $\rimpl\psi$
  for all edges~$\psi\in\Eq(x,x)$ (where we use obvious notation for
  substitution; formally, $\Eq(x,x)=g\cdot\Eq(x,y)$ where
  $g(x)=g(y)=x$). Moreover, we assume that~$\RelAx$ explicitly
  includes the (derivable) formulae
  $\Eq(x_1,y_1)\cup\dots\cup\Eq(x_{\arity(\alpha)},y_{\arity(\alpha)})\cup\{\alpha(x_1,\dots,x_{\arity(\alpha)})\}\rimpl\alpha(y_1,\dots,y_{\arity(\alpha)})$
  saying that all relations~$\alpha\in\Rels$ are closed under $\Eq$
  (implying also that~$\Eq$ is symmetric and
  transitive). This is w.l.o.g.~as we can always extend a given Horn
  theory with an equality predicate axiomatized by the above
  conditions without changing its category of models; indeed we leave
  this predicate implicit in examples whose natural presentation does
  not include it.
\end{assn}
\begin{expl}\label{E:HornCategories}
  We mention some key examples of Horn theories:
  \begin{enumerate}
  \item\label{E:item-relcat-set} The category $\Set$ of sets and
    functions is specified by the trivial Horn theory
    $(\emptyset, \emptyset)$.
  
  \item\label{E:item-relcat-pos} The category $\Pos$ of partially
    ordered sets (posets) and monotone maps is specified by the
    $\omega$-ary Horn theory consisting of a single binary relation
    symbol $\leq$ and the axioms
    \[
      x\leq x;\qquad		
      x\leq y,\, y\leq z\rimpl x\leq z; \qquad\text{and}\qquad 
      x\leq y,\, y\leq x\rimpl x=y.        
    \]
    This theory expresses equality (\autoref{assn:equality}) via
    $\Eq(x,y)=\{x\le y,y\le x\}$.
    
  \item\label{E:item-relcat-met} The theory $\Horn_{\Met}$ of
    \emph{metric spaces} is the $\omega_1$-ary theory consisting of
    binary relation symbols $=_{\epsilon}$ for all $\epsilon\in\Q\cap[0,1]$,
    and the axioms
    \begin{align}
      &\rimpl x =_{0} x   								\tag{\textbf{Refl}} \\
      x =_0 y &\rimpl x=y 										\tag{\textbf{Equal}} \\
      x =_{\epsilon} y &\rimpl y =_{\epsilon} x						\tag{\textbf{Sym}} \\
      \{x =_{\epsilon} y, y =_{\epsilon'} z\} &\rimpl x =_{\epsilon+\epsilon'} z \tag{\textbf{Triang}} \\
      x =_{\epsilon} y &\rimpl x =_{\epsilon+\epsilon'} y  				\tag{\textbf{Up}} \\
      \{x =_{\epsilon'} y\mid \Q_{\ge 0}\owns\epsilon'>\epsilon\} &\rimpl x =_{\epsilon} y   	\tag{\textbf{Arch}}
    \end{align}
    where $\epsilon,\epsilon'$ range over~$\Q\cap[0,1]$ (that is, the
    axioms mentioning $\epsilon,\epsilon'$ are in fact axiom schemes
    representing one axiom for each $\epsilon,\epsilon'$). This theory
    expresses equality via $\Eq(x,y)=\{x=_0y\}$; in fact, even if we
    remove $=_0$, the remaining theory still expresses equality via
    $\Eq(x,y)=\{x=_{1/n}y\mid n>0\}$.  The theory $\Horn_{\Met}$
    specifies the category $\Met$ of $1$-bounded metric spaces and
    non-expansive maps, in the sense that~$\Str(\Horn_\Met)$ and
    $\Met$ are concretely isomorphic: $X\in\Str(\Horn_\Met)$ induces
    the $1$-bounded metric space $(X, d)$ given by
    \(
      d(x, y)=\bigwedge\{\epsilon\mid x=_{\epsilon} y\in\Edge(X)\},
    \)
    and conversely a metric space~$(X,d)$ induces the
    $\Horn_\Met$-model on~$X$ with edges
    $\{x=_\epsilon y\mid x,y\in X, d(x,y)\le\epsilon\}$.

  \item Let~$L$ be a complete lattice (for simplicity), and let
    $L_0\subseteq L$ be meet-dense in~$L$ in the sense
    that $l=\bigwedge\{p\in L_0\mid p\ge l\}$ for each $l\in L$;
    whenever $q\ge\bigwedge P$ for $q\in L_0$ and $P\subseteq L_0$
    such that $\bigwedge P\notin L_0$, then $q\ge p$ for some $p\in P$
    (e.g.~these conditions hold trivially for $L_0=L$).  Further, fix
    $\lambda$ such that $|L_0|<\lambda$. Let $\Horn_L$ be the
    $\lambda$-ary Horn theory with binary relation symbols $\alpha_p$
    for all $p\in L_0$ and axioms\cfnote{Does this theory express
      equality?}
    \begin{align*}
      \{\alpha_{p}(x,y)\mid p\in P\}&\rimpl \alpha_{q}(x,y)
      && (P\subseteq L_0,q=\textstyle\bigwedge P\in L_0)\tag{\textbf{Arch}}\\
      \alpha_p(x,y) & \rimpl \alpha_q(x,y) && (p,q\in L_0, p\le q) \tag{\textbf{Up}}
    \end{align*}
    where $p,q$ range over~$L_0$. Then $\Str(\Horn_L)$ is concretely
    isomorphic to the category of \emph{$L$-valued relations}, whose
    objects~$X$ are sets~$X$ equipped with map
    $P\colon X\times X\to L$, and whose morphisms $(X,P)\to (Y,Q)$ are
    maps $X\to Y$ such that $Q(f(x),f(y))\le P(x,y)$. (Of course, the
    previous example is essentially the special case $L=[0,1]$,
    $L_0=\Q\cap[0,1]$ with some additional axioms.)
  \item 
    A signature of partial operations is a set~$P$ of operation
    symbols $f$ with assigned finite arities $\arity(f)$.  A (partial)
    $P$-algebra is then a set $A$ and, for each $f\in P$, a partial
    function $f_A\colon A^{\arity(f)}\to A.$ A homomorphism of partial
    algebras is a map $h\colon A\to B$ such that whenever
    $f_A(x_1,\dots,x_{\arity(f)})$ is defined, then
    $f_B(h(x_1),\dots,h(x_{\arity(f)}))$ is defined and equals
    $h(f_A(x_1,\dots,x_{\arity(f)}))$. The category of partial
    $P$-algebras and their homomorphims is concretely isomorphic to
    the category of models of the $\omega$-ary Horn theory consisting
    of relational symbols $\alpha_f$ of arity $\arity(f)+1$ for all
    $f\in P$ (with $\alpha_f(x_1,\dots, x_{\arity(f)}, y)$ being
    understood as $f(x_1,\dots, x_{\arity(f)})=y$), and axioms
    \[
      \{\alpha(x_1,\dots, x_{\arity(f)}, y), \alpha(x_1,\dots, x_{\arity(f)}, z)\}\rimpl y= z.
    \]
  \end{enumerate}
\end{expl}
We proceed to recall some key aspects of the categorical structure of
$\Str(\Horn)$.

\subparagraph*{Reflection} We first note
\begin{propn}\label{prop:refl}
  $\Str(\Rels, \RelAx)$ is a (full) epi-reflective  subcategory of
  $\Str(\Rels)$.
\end{propn}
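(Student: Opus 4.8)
The plan is to obtain reflectivity from the reflection theorem \autoref{T:reflection} and then promote it to \emph{epi}-reflectivity by a factorization argument; I will also indicate a more hands-on alternative.

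\emph{Reflectivity.} First I would record that $\Str(\Rels)$ is locally finitely presentable, hence locally $\lambda$-presentable for our fixed regular $\lambda$: it is cocomplete (colimits are formed on underlying sets, pushing edges forward along the colimit injections), the finite $\Rels$-structures form an essentially small strong generator, and every $\Rels$-structure is the $\omega$-directed colimit of its finite substructures. Next I would check that $\Str(\Horn)$ is closed in $\Str(\Rels)$ under limits and $\lambda$-directed colimits. For limits: these are created on underlying sets, an edge being present in the limit exactly when all its projections are; so if a valuation into a limit of models satisfies all premisses $\Phi$ of an axiom $\Phi\rimpl\psi$ of the equality-extended $\RelAx$, then every projection satisfies $\Phi$, hence $\psi$ (each vertex being a model), and reading this back yields $\psi$ in the limit --- for $\psi$ a relational edge because edges are detected projectionwise, for $\psi$ an equality because two points of a limit agreeing in all vertices coincide. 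For $\lambda$-directed colimits: $\Phi\cup\{\psi\}$ mentions fewer than $\lambda$ variables and (crucially, using $\card\Phi<\lambda$) fewer than $\lambda$ edges, so a valuation into the colimit together with one origin stage per element and one membership witness per edge in $\Phi$ all lie below a single stage of the diagram by $\lambda$-directedness; that stage is a model, satisfies $\psi$, and the colimit injection transports this to the colimit. By \autoref{T:reflection}, $\Str(\Horn)$ is reflective in $\Str(\Rels)$; write $r_X\colon X\to\Refl X$ for the reflection arrows.

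\emph{Epi-reflectivity.} I would use that $\Str(\Rels)$ has a (surjective, embedding) factorization system: any morphism factors as a surjection onto its set-theoretic image, equipped with the relation-reflecting substructure of the codomain, followed by that embedding. Factor $r_X=m\cdot e$ with $e\colon X\twoheadrightarrow I$ surjective and $m\colon I\hookrightarrow\Refl X$ an embedding. Since the axioms of $\Horn$ are (possibly infinitary) Horn formulas and these pass to substructures, $I$ --- a substructure of the model $\Refl X$ --- is itself a model. Hence $e$ factors through the reflection, $e=e^{\sharp}\cdot r_X$ for a unique $e^{\sharp}\colon\Refl X\to I$, and then $m\cdot e^{\sharp}\cdot r_X=m\cdot e=r_X=\id\cdot r_X$ forces $m\cdot e^{\sharp}=\id$ by the uniqueness part of the universal property of $r_X$. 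Thus the mono $m$ is split epi, hence iso, so $r_X\cong e$ is epi.

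\emph{Alternative and main obstacle.} One can instead construct the reflection explicitly: close $\overline X$ under the derivation rule over the point set $|X|$ to get a model carried by $|X|$, then quotient by the resulting congruence; surjectivity of the quotient is clear, and its universal property follows by induction on derivations from soundness of the calculus and \autoref{assn:equality}. Either way the only genuine (if still routine) work is bookkeeping around newly derivable equalities versus relational edges --- in the abstract route this is absorbed into the limit/colimit computations and into the fact that Horn formulas are inherited by substructures, which is the step I would treat most carefully.
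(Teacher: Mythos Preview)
Your proof is correct. The paper, however, takes precisely the route you describe as the ``alternative'': it constructs the reflector $\Refl X$ explicitly by closing $\Edge(X)$ under derivation over $|X|$, quotienting by the resulting derivable equality $\sim$, and taking $r_X$ to be the quotient map --- surjectivity of $r_X$ is then immediate. Your main argument instead obtains reflectivity abstractly from \autoref{T:reflection} by directly verifying closure of $\Str(\Horn)$ under limits and $\lambda$-directed colimits in $\Str(\Rels)$, and then deduces surjectivity of $r_X$ from the (surjective, embedding) factorization together with the observation that substructures (in the relation-reflecting sense) of models are models.

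Both routes are sound; the explicit construction has the advantage that it yields a concrete description of $\Refl X$ which the paper uses immediately afterwards (e.g.~in the characterization of $\lambda$-presentable objects, \autoref{prop:pres-object}, and in the construction of compact objects, \autoref{S:presentableobjects}), whereas your abstract approach is cleaner if only existence of the reflector is needed and sidesteps the bookkeeping around derivable equalities that you correctly flag as the delicate part. Note also that in the paper's logical flow, closure of $\Str(\Horn)$ under limits in $\Str(\Rels)$ is \emph{derived} from \autoref{prop:refl} rather than used as a hypothesis for it, so your argument reorganizes the dependencies slightly --- harmlessly, since there is no circularity.
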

\noindent Since $\Str(\Rels)$ is easily seen to be complete and
cocomplete, it follows that $\Str(\Rels,\RelAx)$ is cocomplete and
moreover closed under limits in $\Str(\Rels)$, and hence complete. We
write
\begin{equation*}
  \Refl\colon\Str(\Rels)\to\Str(\Rels,\RelAx)\quad\text{and}\quad r_X\colon X\to\Refl X
\end{equation*}
for the left adjoint of the inclusion
$\Str(\Rels,\RelAx)\subto\Str(\Rels)$ (the \emph{reflector}) and the
corresponding (surjective) reflection maps, respectively. Explicitly,
$\Refl X$ is constructed as follows. We define an equivalence~$\sim$
on~$X$ by $x\sim y$ if $\Edge(X)$ entails $x=y$ under~$\Horn$ (in the
sense defined above), and let $q\colon X\to X/\mathord\sim$ denote the
quotient map; then $\Refl X$ has underlying set~$X/\mathord\sim$, and contains
precisely the edges $q\cdot\edge$ such that $\Edge(X)$
entails~$\edge$; moreover, $r_X=q$ as a map.

\subparagraph*{Local presentability} One easily checks
\begin{lem}\label{prop:fp-rels}
  An object $(X,E)\in\Str(\Rels)$ is $\lambda$-presentable iff
  $\card X<\lambda$ and~$\card E<\lambda$; the category $\Str(\Rels)$
  is locally finitely presentable.
\end{lem}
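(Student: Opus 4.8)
The plan is to prove the two claims of \autoref{prop:fp-rels} in turn: first that $(X,E)\in\Str(\Rels)$ is $\lambda$-presentable iff $\card X<\lambda$ and $\card E<\lambda$, and then that $\Str(\Rels)$ is locally finitely presentable. For the easy direction of the characterization, suppose $\card X<\lambda$ and $\card E<\lambda$. I would take a $\lambda$-directed colimit $(D_i\xrightarrow{c_i}C)_{i\in I}$ in $\Str(\Rels)$ and a morphism $m\colon(X,E)\to C$. Since $\Str(\Rels)$ has colimits computed on underlying sets (this is immediate: the colimit carries the union of the images of all edge-sets, and $\lambda$-directedness lets us also check reflection of individual edges), the underlying colimit in $\Set$ is a $\lambda$-directed colimit of the $|D_i|$. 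As $\card X<\lambda$ and sets of cardinality $<\lambda$ are $\lambda$-presentable in $\Set$, the underlying map $|m|$ factors (essentially uniquely) as $|c_i|\cdot g$ for some $i$ and some $g\colon X\to|D_i|$. Now each edge $e\in E$ is sent by $m$ into $\Edge(C)$, hence by $\lambda$-directedness and $\card E<\lambda$ we can move up to a single index $j\ge i$ at which all of the (finitely many, per edge) witnesses land in $\Edge(D_j)$; since $\card E<\lambda$ and $\lambda$ is regular, one common such $j$ exists. This gives a morphism $(X,E)\to D_j$ over $m$, and essential uniqueness is inherited from $\Set$ (again using regularity to combine the coequalizing indices for $|g|$ with the at-most-$\card E$-many further constraints coming from edges).

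For the converse direction, suppose $(X,E)$ is $\lambda$-presentable. Here the idea is to write $(X,E)$ itself as a $\lambda$-directed colimit of its ``small substructures'': let $I$ be the poset of pairs $(X_0,E_0)$ with $X_0\subseteq_\lambda X$, $E_0\subseteq_\lambda E$, and $E_0$ an edge-set in $X_0$ (i.e.\ all vertices of edges in $E_0$ lie in $X_0$), ordered by componentwise inclusion. This poset is $\lambda$-directed since $\lambda$ is regular (the union of $<\lambda$-many such pairs is again one), and the evident diagram has colimit $(X,E)$ in $\Str(\Rels)$ with the inclusions as the colimit cocone. Applying $\lambda$-presentability to the identity $\id\colon(X,E)\to(X,E)$, it factors through some colimit injection $(X_0,E_0)\hookrightarrow(X,E)$; but an inclusion can only be split epi if it is the whole thing, so $X_0=X$ and $E_0=E$, forcing $\card X<\lambda$ and $\card E<\lambda$.

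For local finite presentability ($\lambda=\omega$), I would verify the three conditions of the definition: $\Str(\Rels)$ is cocomplete (colimits on underlying sets, as noted); the finitely presentable objects form an essentially small class, namely — by the characterization just proved — the finite structures with finitely many edges, of which there are only a set up to isomorphism; and every $(X,E)\in\Str(\Rels)$ is a filtered (indeed $\omega$-directed) colimit of finitely presentable objects, which is exactly the $I$-indexed diagram of finite substructures from the previous paragraph, now instantiated at $\lambda=\omega$.

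The main obstacle — such as it is — is bookkeeping around regularity of $\lambda$ in the ``move up to a common index'' arguments: one must be careful that combining the index from the underlying-set factorization with the at-most-$\card E$-many indices needed to capture the edges, and similarly the essential-uniqueness constraints, still yields a single index, which is precisely where $\card E<\lambda$ and the regularity of $\lambda$ (so that a $<\lambda$-sized family in a $\lambda$-directed poset has an upper bound) are used. Everything else is routine once one observes that colimits and the relevant factorizations in $\Str(\Rels)$ are computed on underlying sets, with edges only imposing ``upward-closed'' membership constraints that $\lambda$-directedness resolves.
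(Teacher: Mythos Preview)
Your proof is correct. The paper does not give a proof of this lemma at all (it is introduced with ``One easily checks''), so there is no approach to compare against; your argument is exactly the standard verification one would expect, writing an arbitrary structure as the $\lambda$-directed colimit of its substructures with $<\lambda$ points and $<\lambda$ edges, and using that the forgetful functor to $\Set$ creates colimits. One minor remark: in the essential-uniqueness part you do not actually need any ``further constraints coming from edges''---two morphisms in $\Str(\Rels)$ are equal iff their underlying functions are, so essential uniqueness is inherited verbatim from $\Set$ without invoking~$\card E<\lambda$ again.
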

By \autoref{prop:refl} and since $\Str(\Rels,\RelAx)$ is easily seen
to be closed under $\lambda$-directed colimits in~$\Str(\Rels)$, we
thus have \removeThmBraces
\begin{propn}[{\cite[Example 5.27(3)]{AR94}}]\label{P:presentable}
  $\Str(\Horn)$ is locally $\lambda$-presentable.
\end{propn}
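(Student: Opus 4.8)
The plan is to realize $\Str(\Horn)=\Str(\Rels,\RelAx)$ as a full subcategory of the locally $\lambda$-presentable category $\Str(\Rels)$ that is closed under limits and under $\lambda$-directed colimits, and then to invoke the reflection theorem \autoref{T:reflection}. That $\Str(\Rels)$ is locally $\lambda$-presentable follows from \autoref{prop:fp-rels}: it is locally finitely presentable, and a locally finitely presentable category is locally $\mu$-presentable for every regular cardinal $\mu$ \cite{AR94}. Closure of $\Str(\Horn)$ under limits in $\Str(\Rels)$ has already been recorded in the discussion following \autoref{prop:refl}. Hence the whole content of the proof is the verification that $\Str(\Horn)$ is closed under $\lambda$-directed colimits; once this is in place, \autoref{T:reflection} immediately gives that $\Str(\Horn)$ is (reflective and) locally $\lambda$-presentable.

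To verify this, I would first record the elementary description of colimits in $\Str(\Rels)$: for a diagram $D\colon I\to\Str(\Rels)$ with colimit cocone $c_i\colon Di\to C$, the underlying set $|C|$ is the colimit $\colim_i|Di|$ formed in $\Set$, and $\Edge(C)=\bigcup_{i\in I}c_i\cdot\Edge(Di)$; universality is immediate since morphisms are just relation-preserving maps. Now let $D$ be $\lambda$-directed and land in $\Str(\Horn)$, with colimit $C$ as above, and let $\Phi\rimpl\psi\in\RelAx$. Since $\Horn$ is $\lambda$-ary, $\card\Phi<\lambda$, and since $\lambda$ is regular the set $V_0\subseteq\Vars$ of variables occurring in $\Phi\cup\{\psi\}$ also satisfies $\card V_0<\lambda$. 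Given a valuation $\kappa\colon\Vars\to|C|$ with $C\models\kappa\cdot\phi$ for all $\phi\in\Phi$, I would use that $|C|$ is a $\lambda$-directed colimit in $\Set$ and $\card V_0<\lambda$ to find an index $j$ and a lift $\kappa_j\colon V_0\to|Dj|$ with $c_j\cdot\kappa_j=\kappa|_{V_0}$; then, since each edge $\kappa\cdot\phi$ lies in $c_i\cdot\Edge(Di)$ for some $i$ and since coincidences of elements in a directed colimit of sets are witnessed at a later stage, I would pass — using $\card\Phi<\lambda$ and $\lambda$-directedness to take an upper bound of the stages so produced — to a single index $m\ge j$ at which $\kappa_m\colon V_0\to|Dm|$, obtained from $\kappa_j$ by the connecting map, satisfies $c_m\cdot\kappa_m=\kappa|_{V_0}$ and $\kappa_m\cdot\phi\in\Edge(Dm)$ for all $\phi\in\Phi$.

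From here everything is formal. Since $Dm$ is a model and $Dm\models\kappa_m\cdot\phi$ for all $\phi\in\Phi$, we have $\overline{Dm}\models\kappa_m\cdot\psi$. If $\psi$ is a $\Rels$-edge, then $\kappa_m\cdot\psi\in\Edge(Dm)$, so $\kappa\cdot\psi=c_m\cdot(\kappa_m\cdot\psi)\in\Edge(C)$; if $\psi$ is an equality $x=y$, then $\kappa_m(x)=\kappa_m(y)$, so $\kappa(x)=c_m(\kappa_m(x))=c_m(\kappa_m(y))=\kappa(y)$. Either way $\overline{C}\models\kappa\cdot\psi$, so $C$ satisfies $\Phi\rimpl\psi$; as the axiom was arbitrary, $C\in\Str(\Horn)$, establishing the desired closure.

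The only genuinely delicate step, and the one I would expect to have to write out with care, is the index book-keeping in the second paragraph: one must package a $<\lambda$-sized family of edges — each witnessed at a possibly different stage, and possibly only becoming an actual edge of $Dm$ after a further transition map — into a single stage $m$. This is exactly where the regularity of $\lambda$, the $\lambda$-directedness of $I$, and the concrete description of directed colimits in $\Set$ (both elements and their coincidences appear at bounded stages) are all used. Everything else is routine.
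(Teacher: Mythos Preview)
Your proposal is correct and follows essentially the same approach as the paper: the paper also obtains local $\lambda$-presentability by combining \autoref{prop:refl} (which yields closure under limits) with closure of $\Str(\Rels,\RelAx)$ under $\lambda$-directed colimits in $\Str(\Rels)$, and then invoking \autoref{T:reflection} (respectively the cited result from \cite{AR94}). The only difference is that the paper leaves the closure under $\lambda$-directed colimits as ``easily seen'', whereas you spell out the verification in detail; your bookkeeping argument is correct.
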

\resetCurThmBraces
\noindent The forgetful functor $\Str(\Horn)\to\Set$ preserves
$\lambda$-directed colimits. Moreover, we have an easy
characterization of $\lambda$-presentable objects:
\begin{propn}\label{prop:pres-object}\label{R:RelsStructures-itm-fp}
  For an $\Horn$-model $X$, the following are equivalent.
  \begin{enumerate}
  \item\label{item:pres} $X$ is $\lambda$-presentable in $\Str(\Rels, \RelAx)$;
  \item\label{item:refl-pres} $X\cong\Refl(Y,E)$ for some $\lambda$-presentable
    $(Y,E)\in\Str(\Rels)$;
  \item\label{item:generated} $\card|X|<\lambda$, and $X$ is
    \emph{$\lambda$-generated}, i.e.~there exists $E\subseteq\Edge(X)$
    such that $\card E<\lambda$ and $E$ entails every edge
    in~$\Edge(X)$ under~$\Horn$ (equivalently, $\Refl i$ is an
    isomorphism where $i\colon (|X|,E)\to X$ is the
    $\Str(\Rels)$-morphism carried by~$\id_X$).
\end{enumerate}
\end{propn}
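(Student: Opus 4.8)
The plan is to prove the cycle $(1)\Rightarrow(3)\Rightarrow(2)\Rightarrow(1)$. Two structural facts are used throughout: the reflector $\Refl\colon\Str(\Rels)\to\Str(\Horn)$ preserves all colimits (being a left adjoint), while the inclusion $\Str(\Horn)\hookrightarrow\Str(\Rels)$ preserves, reflects and creates $\lambda$-directed colimits, since $\Str(\Horn)$ is closed under them in $\Str(\Rels)$. Implication $(2)\Rightarrow(1)$ is then the standard fact that a left adjoint whose right adjoint preserves $\lambda$-directed colimits sends $\lambda$-presentable objects to $\lambda$-presentable ones: for a $\lambda$-directed colimit $(D_i\to D)$ in $\Str(\Horn)$ — which is also one in $\Str(\Rels)$ — the adjunction bijections $\Str(\Horn)(\Refl(Y,E),D_i)\cong\Str(\Rels)((Y,E),D_i)$ are natural in $i$, and passing to the colimit, together with $\lambda$-presentability of $(Y,E)$ in $\Str(\Rels)$ (\autoref{prop:fp-rels}), shows the canonical comparison map $\colim_i\Str(\Horn)(\Refl(Y,E),D_i)\to\Str(\Horn)(\Refl(Y,E),D)$ to be a bijection; hence $X\cong\Refl(Y,E)$ is $\lambda$-presentable.

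For $(3)\Rightarrow(2)$ I would argue from the explicit construction of $\Refl$. Given $E\subseteq\Edge(X)$ with $\card E<\lambda$ entailing every edge of $\Edge(X)$, the structure $(|X|,E)$ is $\lambda$-presentable in $\Str(\Rels)$ by \autoref{prop:fp-rels}, and the claim is that the comparison $\Refl i\colon\Refl(|X|,E)\to\Refl X$ induced by the morphism $i\colon(|X|,E)\to X$ carried by $\id_{|X|}$ is an isomorphism; since $\Refl X\cong X$ for the $\Horn$-model $X$, this yields $X\cong\Refl(|X|,E)$ and is exactly the reformulation of $\lambda$-generation in clause (3). The one ingredient needed is soundness of the derivation system in the $\Rels\cup\{=\}$-structure $\bar X$: because $X$ satisfies $\RelAx$ and the diagonal makes $\bar X$ satisfy the equality axioms of $\bar{\RelAx}$ as well, any edge over $|X|$ entailed by a subset of $\Edge(X)$ already holds in $\bar X$. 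Applied to $E$, this forces the relation ``$E$ entails $x=y$'' to coincide with equality, so $\Refl(|X|,E)$ has underlying set $|X|$, and its edge set $\{e\mid E\vdash e\}$ equals $\Edge(X)$ ($\subseteq$ by soundness, $\supseteq$ by hypothesis); the same computation applied to $\Edge(X)$ itself identifies $\Refl X$ with $X$. Comparing, $\Refl i$ is the identity on underlying sets and edge sets, hence an isomorphism.

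The substance lies in $(1)\Rightarrow(3)$. I would first present $X$, regarded in $\Str(\Rels)$, as the $\lambda$-directed colimit of its substructures $(S,E_S)$ ranging over $S\subseteq_\lambda|X|$ together with a set $E_S\subseteq_\lambda\Edge(X)$ of edges among elements of $S$; this is a genuine colimit because the signature is finitary — every edge has finitely many vertices, hence lies in some $(S,E_S)$ — and the diagram is $\lambda$-directed because $\lambda$ is regular. Applying $\Refl$ and using $\Refl X\cong X$ exhibits $X$ as the $\lambda$-directed colimit of the objects $\Refl(S,E_S)$, with colimit injections $c_S$ satisfying $c_S\cdot r_{(S,E_S)}=j_S$, where $j_S$ is the $\Str(\Rels)$-morphism carried by the inclusion $S\hookrightarrow|X|$ (by naturality of the reflection unit, transported along $\Refl X\cong X$). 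Since $X$ is $\lambda$-presentable, $\id_X$ factors through some injection $c\colon\Refl(S_0,E_0)\to X$, so $X$ is a retract of $\Refl(S_0,E_0)$; in particular $\card|X|<\lambda$, as $|X|$ is then a retract of a quotient of $S_0$. For $\lambda$-generation I would show that $E_0$ itself entails every $e\in\Edge(X)$: writing $g\colon X\to\Refl(S_0,E_0)$ for the retraction ($c\cdot g=\id_X$), the edge $g\cdot e$ lies in $\Edge(\Refl(S_0,E_0))$ since $g$ is relation-preserving, hence by the explicit construction it has the form $q\cdot e'$ with $q$ the reflection quotient on $S_0$ and $e'$ a $\Rels$-edge in $S_0$ satisfying $E_0\vdash e'$; applying the underlying map of $c$ and using $c\cdot g=\id_X$ together with $c\cdot r_{(S_0,E_0)}=j_{S_0}$ gives $e=\iota\cdot e'$, where $\iota\colon S_0\hookrightarrow|X|$ is the inclusion, and transporting the derivation $E_0\vdash e'$ along $\iota$ yields $E_0\vdash e$ over $|X|$. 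Since $\card E_0<\lambda$, the object $X$ is $\lambda$-generated.

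I expect the main obstacle to be this last direction. The delicate points are checking that the decomposition of $X$ in $\Str(\Rels)$ really is a $\lambda$-directed colimit — which rests squarely on finitariness of the signature and regularity of $\lambda$ — then tracking how $\Refl$ transforms the decomposition and, crucially, its colimit injections so that they compose correctly with the reflection units, and finally squeezing out of a bare retraction both the cardinality bound and an honest $\lambda$-small generating family of edges. The soundness fact feeding $(3)\Rightarrow(2)$, though routine, should be stated explicitly, as it is precisely what trivializes the equality-closure inside the reflection.
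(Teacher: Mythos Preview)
Your proof is correct. The implications $(2)\Rightarrow(1)$ and $(3)\Rightarrow(2)$ match the paper's (you spell out the parenthetical equivalence in~(3) that the paper takes for granted), but your $(1)\Rightarrow(3)$ differs. The paper first obtains $\card|X|<\lambda$ by a separate argument (deferred to the proof of \autoref{P:fpchar}), then writes~$X$ as the $\lambda$-directed colimit in $\Str(\Rels)$ of the objects $(|X|,E)$ with \emph{fixed} underlying set and $E\subseteq_\lambda\Edge(X)$; factorizing $\id_X$ through some $\Refl i$ makes $\Refl i$ a split epimorphism, and a short naturality-square argument shows its underlying map is bijective, hence $\Refl i$ is monic and therefore an isomorphism---which is precisely the parenthetical form of~(3). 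You instead vary both $S$ and $E_S$, extract the cardinality bound directly from the retract, and then chase edges through the explicit description of the reflector to exhibit $E_0$ as a generating set. Your route is self-contained and handles cardinality and generation in a single pass; the paper's route avoids the concrete edge-tracking by the cleaner split-epi-plus-mono argument, at the cost of a forward reference for the cardinality bound. (A minor terminological slip: your $g$ with $c\cdot g=\id_X$ is a \emph{section}, not a retraction; the equation you use is correct regardless.)
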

\begin{rem}
  For instance, every finite partial order is $\omega$-presentable,
  and every countable metric space is $\omega_1$-presentable.  We
  emphasize that the situation is more complicated for
  $\kappa$-presentable objects where $\kappa<\lambda$; we treat this case in
  more detail in \autoref{S:presentableobjects}. For instance, every
  finite metric space with rational distances
  (cf.~\autoref{E:HornCategories}) is finitely generated in the sense
  of \autoref{prop:pres-object} but not finitely presentable.
\end{rem}

%
    


\subparagraph*{Closed monoidal structure} The pointwise structure
defines an \emph{internal hom} functor:
\begin{defn}\label{D:internalhom}
  The \emph{internal hom of $X, Y\in\Str(\Rels)$} is the
  $\Rels$-structure $[X, Y]$ carried by $\Str(\Rels)(X, Y)$ with set
  of edges
  \[
    \Edge([X, Y]):=\{e \mid \forall x\in X.\, \pi_x\cdot
    e\in\Edge(Y)\}
  \]
  where $\pi_x\colon \Str(\Rels)(X, Y)\to Y$ is defined by
  $\pi_x(g)= g(x)$. For each $X\in\Str(\Rels)$, the assignment
  $Y\mapsto [X, Y]$ defines a (covariant) \emph{internal hom functor}
  \[
  [X, -]\colon\Str(\Rels)\to\Str(\Rels)
  \]
  with the action on a morphism $m\colon Y\to Z$ 
  given by post-composition: $[X, m](g):= m\cdot g$.
\end{defn}
\noindent One easily checks that $[X,-]$ restricts to
\[[X,-]\colon\Str(\Horn)\to\Str(\Horn).\]
It turns out that the internal hom functor is always part of a closed
symmetric monoidal structure on $\Str(\Horn)$. Indeed, for
$X,Y\in\Str(\Rels)$, we define $X\otimes Y$ as the structure with
underlying set $X\times Y$ and edges
\[
  \{e\mid (\pi_1\cdot e\text{ constant }\land \pi_2\cdot e\in\Edge(Y))
  \lor (\pi_2\cdot e\text{ constant }\land \pi_1\cdot e\in\Edge(X))\}.
\]
where an edge $(\alpha,f)$ is \emph{constant} if~$f$ is a constant
map, and $\pi_1\colon X\times Y\to X$ and $\pi_2\colon X\times Y\to Y$
are the projection maps. We then have $\Str(\Pi)$-morphisms
$u_Y\colon Y\to[X,Y\otimes X]$, $u_Y(y)(x)=(y,x)$.
It is straightforward to check that $u_Y$ is a universal arrow. That
is:
\begin{propn}\label{P:closedmonoidal}
  For every $X\in\Str(\Rels)$, $(-)\otimes X$ is a left adjoint of
  $[X, -]$.
\end{propn}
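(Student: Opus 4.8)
The plan is to verify directly that the map $u_Y\colon Y\to[X,Y\otimes X]$ is a universal arrow from $Y$ to the functor $[X,-]$, i.e.\ that for every $\Str(\Rels)$-object $Z$ and every morphism $g\colon Y\to[X,Z]$ there is a unique morphism $\hat g\colon Y\otimes X\to Z$ with $[X,\hat g]\cdot u_Y=g$. On underlying sets the only possible candidate is forced: $\hat g(y,x):=g(y)(x)$, since $[X,\hat g](u_Y(y))=\hat g\cdot u_Y(y)$ sends $x$ to $\hat g(y,x)$, and this must equal $g(y)(x)$. So uniqueness is immediate once we know $\hat g$ is a morphism, and the real content is the two preservation checks: that $\hat g$ preserves edges, and that $u_Y$ itself preserves edges (so that $u_Y$ is a legitimate morphism in the first place). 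By the universal property this also shows $(-)\otimes X$ is left adjoint to $[X,-]$, naturality being routine.

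First I would check that $u_Y$ is a morphism: given an edge $\alpha(f)\in\Edge(Y)$ with $f\colon\arity(\alpha)\to|Y|$, we must see $u_Y\cdot\alpha(f)=\alpha(u_Y\cdot f)\in\Edge([X,Y\otimes X])$, i.e.\ that for every $x\in X$, $\pi_x\cdot(u_Y\cdot f)$ is an edge of $Y\otimes X$. But $\pi_x(u_Y(f(i)))=(f(i),x)$, so $\pi_x\cdot u_Y\cdot f=\langle f,\,\mathrm{const}_x\rangle$, whose first component is $f$ (an edge of $Y$) and whose second component is the constant map at $x$; hence it lies in $\Edge(Y\otimes X)$ by the first disjunct in the definition of $\otimes$. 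Next, for the transpose $\hat g$ of $g\colon Y\to[X,Z]$, take an edge $\alpha(h)\in\Edge(Y\otimes X)$ with $h\colon\arity(\alpha)\to|Y|\times|X|$; we must show $\alpha(\hat g\cdot h)\in\Edge(Z)$. Here we use the definition of $\Edge(Y\otimes X)$ and split into two cases. If $\pi_1\cdot h$ is constant, say equal to $y_0$, and $\pi_2\cdot h\in\Edge(X)$, then $\hat g\cdot h$ sends $i\mapsto g(y_0)(\pi_2(h(i)))$, i.e.\ $\hat g\cdot h=g(y_0)\cdot(\pi_2\cdot h)$; since $g(y_0)\in[X,Z]$ is a morphism $X\to Z$ and $\pi_2\cdot h$ is an edge of $X$, the image is an edge of $Z$. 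If instead $\pi_2\cdot h$ is constant at some $x_0$ and $\pi_1\cdot h=:f\in\Edge(X)$\,—\,rather, $f\in\Edge(Y)$\,—\,then $\hat g\cdot h$ sends $i\mapsto g(f(i))(x_0)=\pi_{x_0}(g(f(i)))$, i.e.\ $\hat g\cdot h=\pi_{x_0}\cdot(g\cdot f)$; since $g\colon Y\to[X,Z]$ is a morphism, $g\cdot f$ is an edge of $[X,Z]$, and applying $\pi_{x_0}$ to an edge of $[X,Z]$ yields an edge of $Z$ by the very definition of $\Edge([X,Z])$. This closes both cases.

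The main obstacle, such as it is, lies in the second case of the last check: one must notice that the definition of $\Edge([X,Z])$ is exactly engineered so that $\pi_{x_0}$ carries edges of $[X,Z]$ to edges of $Z$, and that the asymmetric ``one component constant, the other an edge'' shape of $\Edge(Y\otimes X)$ is precisely what makes the transpose computation factor as a composite of two known morphisms in each case. Once these are in place, it remains to observe (a) that the assignment $g\mapsto\hat g$ is inverse to $h\mapsto[X,h]\cdot u_Y$, which is a direct pointwise computation, and (b) naturality in $Y$ and $Z$, which follows formally from the pointwise description of the bijection and the definitions of the functorial actions $[X,m](k)=m\cdot k$ and $(m\otimes X)(y,x)=(m(y),x)$. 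Since the argument never used the Horn axioms—only the definitions in $\Str(\Rels)$—and since $[X,-]$ and $(-)\otimes X$ were already noted to restrict to $\Str(\Horn)$, the same adjunction holds over $\Str(\Horn)$; as the functor $(-)\otimes X$ preserves colimits (being a left adjoint on $\Str(\Rels)$) and the reflector $\Refl$ is left adjoint to the inclusion, the closed symmetric monoidal structure descends to $\Str(\Horn)$ in the standard way, but this last point is beyond the present statement.
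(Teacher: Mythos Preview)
Your approach is essentially identical to the paper's: verify that $u_Y$ is a universal arrow by defining the transpose $\hat g(y,x)=g(y)(x)$, checking it is the unique candidate, and then doing the two-case edge-preservation analysis for $\hat g$ and the edge-preservation check for $u_Y$.

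There is one small omission. Before checking that $u_Y$ preserves edges, you need to verify that $u_Y$ even lands in $[X,Y\otimes X]$: the underlying set of $[X,Y\otimes X]$ is $\Str(\Rels)(X,Y\otimes X)$, so each $u_Y(y)$ must be a \emph{morphism} $X\to Y\otimes X$, not just a function. The paper checks this explicitly: for $e\in\Edge(X)$, the edge $u_Y(y)\cdot e$ has $\pi_1$-component constant at~$y$ and $\pi_2$-component $e\in\Edge(X)$, hence lies in $\Edge(Y\otimes X)$. This is routine and entirely parallel to your other checks, but it is a separate step. (Also, in your $u_Y$-is-a-morphism check, the edge $\langle f,\mathrm{const}_x\rangle$ satisfies the \emph{second} disjunct of the definition of $\Edge(Y\otimes X)$, not the first: $\pi_2$ is constant and $\pi_1=f\in\Edge(Y)$.)
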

\begin{cor}\label{c:tensor-refl}
  For every $X\in\Str(\Horn)$, $\Refl((-)\otimes X)$ is a left adjoint
  of $[X, -]\colon\Str(\Horn)\to \Str(\Horn)$.
\end{cor}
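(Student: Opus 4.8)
The plan is to obtain the adjunction $\Refl((-)\otimes X)\dashv [X,-]\colon\Str(\Horn)\to\Str(\Horn)$ by composing two adjunctions already established in the excerpt. First I would recall from \autoref{P:closedmonoidal} that on $\Str(\Rels)$ we have $(-)\otimes X\dashv [X,-]$, and from \autoref{prop:refl} that $\Refl\colon\Str(\Rels)\to\Str(\Horn)$ is left adjoint to the inclusion $\iota\colon\Str(\Horn)\hookrightarrow\Str(\Rels)$. Since $[X,-]$ restricts to $\Str(\Horn)\to\Str(\Horn)$, for $Y,Z\in\Str(\Horn)$ we have $[X,Z]\in\Str(\Horn)$ and $[X,\iota Z]=\iota[X,Z]$ as $\Str(\Rels)$-objects.

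The key computation is then the chain of natural bijections, for $Y,Z\in\Str(\Horn)$:
\[
  \Str(\Horn)\big(\Refl(\iota Y\otimes X),\,Z\big)
  \;\cong\; \Str(\Rels)\big(\iota Y\otimes X,\,\iota Z\big)
  \;\cong\; \Str(\Rels)\big(\iota Y,\,[X,\iota Z]\big)
  \;\cong\; \Str(\Horn)\big(Y,\,[X,Z]\big),
\]
where the first bijection is the reflection adjunction $\Refl\dashv\iota$, the second is \autoref{P:closedmonoidal}, and the third uses $[X,\iota Z]=\iota[X,Z]$ together with fullness of the inclusion $\iota$ (so that $\Str(\Rels)(\iota Y,\iota[X,Z])=\Str(\Horn)(Y,[X,Z])$). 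All three bijections are natural in $Y$ and $Z$, being composites of natural bijections, so the composite exhibits $\Refl((-)\otimes X)$ as left adjoint to $[X,-]$ on $\Str(\Horn)$. Here I am writing $\Refl((-)\otimes X)$ for the functor $Y\mapsto\Refl(\iota Y\otimes X)$, suppressing $\iota$ as in the statement.

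The only point requiring a little care — and the closest thing to an obstacle — is the compatibility $[X,\iota Z]=\iota[X,Z]$, i.e.\ that the internal hom computed in $\Str(\Rels)$ of two $\Horn$-models is again a $\Horn$-model and coincides with the restricted internal hom; but this is exactly the remark made just before \autoref{P:closedmonoidal} that $[X,-]$ restricts to $\Str(\Horn)\to\Str(\Horn)$, so there is nothing further to prove. Everything else is formal composition of adjunctions, so the corollary follows.
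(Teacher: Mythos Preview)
Your proof is correct and essentially identical to the paper's. The paper merely packages the argument as a general proposition (\autoref{P:closed}) about full reflective subcategories of closed monoidal categories in which the internal hom restricts, and then instantiates it using \autoref{L:internalhom}; the proof of that proposition is exactly the chain of natural bijections you wrote down, step for step.
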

That is, $\Str(\Horn)$ is a closed symmetric monoidal category, with
monoidal structure \[X\otimes_\Horn Y = \Refl(X\otimes Y).\] We
briefly refer to $\otimes_\Horn$ as the \emph{Manhattan product}.

\begin{expl}\label{E:Manhattan}
  \begin{enumerate}
  \item In $\Pos$ (\autoref{E:HornCategories}\ref{E:item-relcat-pos}),
    the Manhattan product coincides with binary Cartesian product
    (so~$\Pos$ is Cartesian closed).
  \item In $\Met$ (\autoref{E:HornCategories}\ref{E:item-relcat-met}),
    the Manhattan product $(X,d_X)\otimes_{\Horn_\Met}(Y,d_Y)$ is
    $X\times Y$ equipped with the well-known Manhattan metric~$d$
    given by
    $d((x_1,y_1),(x_2,y_2))=\min(d_X(x_1,x_2)+d_Y(y_1,y_2),1)$ (while
    Cartesian products carry the supremum metric).
  \end{enumerate}
\end{expl}

\begin{defn}
A functor $F\colon\Str(\Horn)\to\Str(\Horn)$ that preserves the pointwise 
structure on morphisms is called \emph{enriched}. That is, 
we call $F$ enriched if for all $X, Y\in\Str(\Horn)$ and all edges 
$f\colon\arity(\alpha)\to\Str(\Horn)(X, Y)$~$(\alpha\in\Rels)$, if 
$[X, Y]\models\alpha(f_i)$, then $[FX, FY]\models\alpha(F(f_i))$.
\end{defn}

\subparagraph*{Internal local presentability} For use of objects~$X$
as arities of operations, we will in fact need that the
\emph{internal} hom $[X,-]$ is $\lambda$-accessible, in which case we
say that~$X$ is \emph{internally $\lambda$-presentable}. For
distinction, we sometimes say that~$X$ is \emph{externally
$\lambda$-presentable} if~$X$ is $\lambda$-presentable in the
standard sense. Indeed, one can show using
\autoref{prop:pres-object} that the class of $\lambda$-presentable objects
in $\Str(\Horn)$ is closed under the Manhattan tensor; it follows
fairly straightforwardly that
\begin{propn}\label{prop:internally-pres}
  Every $\lambda$-presentable object in $\Str(\Horn)$ is internally
  $\lambda$-presentable.
\end{propn}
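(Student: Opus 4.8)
The plan is to show that $[X,-]\colon\Str(\Horn)\to\Str(\Horn)$ preserves $\lambda$-directed colimits for every externally $\lambda$-presentable $X$, by reducing the computation of such a colimit to the underlying-set level, where it becomes the familiar statement that the set-valued hom $\Set(|X|,-)$ interacts well with $\lambda$-directed colimits precisely when $|X|$ is $\lambda$-small. Concretely, let $(C_i\xra{c_i}C)_{i\in I}$ be a $\lambda$-directed colimit in $\Str(\Horn)$. Since the forgetful functor $\Str(\Horn)\to\Set$ preserves $\lambda$-directed colimits (stated after \autoref{P:presentable}), $|C|=\colim_i|C_i|$ in $\Set$. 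First I would observe that the underlying set of $[X,C_i]$ is $\Str(\Horn)(X,C_i)$ and that the comparison map $\colim_i\Str(\Horn)(X,C_i)\to\Str(\Horn)(X,C)$ is a bijection: it is surjective and injective exactly by the definition of external $\lambda$-presentability of $X$, i.e.~every morphism $X\to C$ factors essentially uniquely through some $c_i$ (here I use that $\lambda$-directedness of $I$ lets me merge the finitely-many-at-a-time factorizations; note $\card|X|<\lambda$ by \autoref{prop:pres-object}, so in fact a single factorization of the underlying function suffices, and then relation-preservation is inherited).

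Next I would check that this bijection is in fact an isomorphism of $\Rels$-structures, i.e.~that the edge sets match. An edge $\alpha(g_1,\dots,g_n)$ in $[X,C]$ is one for which $\alpha(g_1(x),\dots,g_n(x))\in\Edge(C)$ for all $x\in X$, and similarly in the colimit $\colim_i[X,C_i]$ an edge is (the image of) some edge $\alpha(g_1,\dots,g_n)$ in some $[X,C_i]$. For the nontrivial inclusion: given an edge of $[X,C]$, lift the finitely many maps $g_1,\dots,g_n$ to a common stage $C_i$ using the bijection just established; then for each of the $<\lambda$ many points $x\in X$ the edge $\alpha(c_i g_1(x),\dots)$ holds in $C$, hence (since edges are created by the $\lambda$-directed colimit in $\Str(\Rels)$, i.e.~$\Edge(C)=\bigcup_i c_i\cdot\Edge(C_i)$ up to the colimit cocone — this is part of \autoref{prop:fp-rels}/the construction of colimits in $\Str(\Rels)$) each such edge already holds at some stage; using $\lambda$-directedness of $I$ and $\card|X|<\lambda$, advance to a single stage $j\ge i$ at which all $|X|$-many edges hold simultaneously, so that $\alpha(g_1,\dots,g_n)$ is an edge of $[X,C_j]$. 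This shows the comparison map is full and faithful on edges, hence an iso in $\Str(\Rels)$, and therefore in $\Str(\Horn)$ since the latter is a full subcategory closed under the relevant structure.

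The main obstacle I anticipate is the bookkeeping in the previous step: one must simultaneously handle $\lambda$-many factorization/entailment conditions (one per point of $X$, and for edges also per edge of $X$) and collapse them to a single stage, which works only because $\lambda$ is regular and $X$ is $\lambda$-presentable in the sharp sense of \autoref{prop:pres-object}\ref{item:generated} — it is genuinely $\lambda$-\emph{generated}, not merely of $\lambda$-small carrier. So the argument must invoke a $\lambda$-small set $E\subseteq\Edge(X)$ of generating edges and argue that preservation of entailment along morphisms lets us reduce all edge conditions in $[X,-]$ to conditions on $E$, keeping the total number of stage-advancements below $\lambda$. Once the comparison morphism is shown to be bijective on points and on edges, it is immediate that $[X,-]$ preserves the colimit, i.e.~$X$ is internally $\lambda$-presentable, and since external $\lambda$-presentability of $X$ is the hypothesis, this establishes the proposition. (The parenthetical closure of the $\lambda$-presentables under the Manhattan tensor mentioned in the preamble is not needed for the proof itself, only as motivation; I would not rely on it.)
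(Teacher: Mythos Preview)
Your direct argument is correct and genuinely different from the paper's. You compute the comparison map $\colim_i[X,C_i]\to[X,C]$ by hand: bijectivity on underlying sets is exactly external $\lambda$-presentability of~$X$, and edge-reflection follows because an edge $\alpha(g_1,\dots,g_n)$ in $[X,C]$ imposes only $\card|X|<\lambda$ many pointwise edge conditions in~$C$, each of which descends to some stage by the explicit description of $\lambda$-directed colimits in $\Str(\Rels)$ (and these coincide with the colimits in $\Str(\Horn)$), whence a common upper bound exists by $\lambda$-directedness.

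The paper instead invokes the general principle (extracted from \cite[Proof of Prop.~2.23]{AR94}) that a right adjoint between locally $\lambda$-presentable categories is $\lambda$-accessible whenever its left adjoint preserves $\lambda$-presentable objects, and then checks that $(-)\otimes_\Horn X$ preserves $\lambda$-presentables --- i.e.\ exactly the closure of $\lambda$-presentables under the Manhattan tensor that you explicitly set aside. So the tensor closure is not mere motivation in the paper's approach; it is the crux. Your route is more elementary and self-contained; the paper's is shorter once the abstract lemma is in hand and yields the tensor-closure statement as part of the package.

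One minor point: your anticipated ``main obstacle'' concerning $\lambda$-generatedness is a red herring. In step~2 you need only $\card|X|<\lambda$, which you already have from \autoref{prop:pres-object}; the edge structure of~$X$ plays no role in verifying that $\alpha(\tilde g_1,\dots,\tilde g_n)$ is an edge of $[X,C_j]$, since that condition quantifies over points of~$X$, not edges. The morphism lifts $\tilde g_k\colon X\to C_i$ in step~1 are already relation-preserving because you obtained them from external $\lambda$-presentability of~$X$ (essentially unique factorization of \emph{morphisms}), so there is no need to re-verify edge preservation via a generating set. Your argument goes through without invoking $\lambda$-generatedness at all.
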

\noindent Indeed this implies that $\Str(\Horn)$ is \emph{internally
  locally $\lambda$-presentable}~\cite{Kelly82b}.
\subsection{Compact Horn Models}\label{S:presentableobjects}

We have seen above that the category $\Str(\Horn)$ (where $\Horn$ is a
$\lambda$-ary Horn theory) is (internally) locally
$\lambda$-presentable, with a straightforward characterization of the
(internally) $\lambda$-presentable objects
(Propositions~\ref{prop:pres-object} and~\ref{prop:internally-pres}). We
proceed to look at the rather less straightforward notion of
internally $\kappa$-presentable objects in $\Str(\Rels, \RelAx)$ for
$\kappa <\lambda$. The main scenario that motivates our interest in this
case is that of finitary monads on categories that are internally
locally $\lambda$-presentable only for some~$\lambda>\omega$, such as
metric spaces. 

Further unfolding definitions. we have that an object~$X$ is
internally $\kappa$-presentable if for every
$\kappa$-directed colimit $(D_i\xrightarrow{c_i} C)_{i\in I}$, the
canonical morphism
\begin{equation*}
  \colim [X,D_i]\to[X,\colim D_i]
\end{equation*}
is an isomorphism. We split this property into two parts: We say
that~$X$ is \emph{weakly $\kappa$-presentable} if the canonical morphism
is always surjective, and \emph{co-weakly $\kappa$-presentable} if the
canonical morphism is always an embedding. \lsnote{Actually, that is
  overkill because with embeddings, epimorphisms would suffice, which
  need not be surjective. Maybe we in fact want quotient/injective?}
Below, we give necessary and sufficient conditions for weak
$\kappa$-presentability. Co-weak $\kappa$-presentability is a more elusive
property; more concretely, it means roughly that $X$-indexed tuples of
derivations in the given Horn theory can be synchronized into single
derivations over $X$-indexed tuples of points. We give some examples
below (\autoref{expl:internal-fp}).

%
\begin{expl}\label{expl:internal-fp}
  We give some examples and non-examples of internally finitely
  presentable objects in locally $\omega_1$-presentable categories
  $\Str(\Pi,\RelAx)$.
  \begin{enumerate}
  \item A metric space is internally finitely presentable iff it is
    finite and discrete. The `if' direction has surprisingly
    complicated reasons: It holds only because over the reals, finite
    joins distribute over directed infima. On the other hand, no
    non-empty metric space is \emph{externally} finitely presentable,
    as its hom-functor will fail to preserve the colimit of the
    directed chain $(D_i)_{i<\omega}$ of spaces $D_i$ with underlying
    set $\{0,1\}$ and metric $d(0,1)=1/(i+1)$.
  \item\label{item:non-distrib} In the category of $L$-valued relations
    for a complete lattice~$L$ in which binary joins fail to
    distribute over directed infima (such lattices exist), the
    two-element discrete space fails to be internally finitely
    presentable.
  \item\label{item:non-distrib-fp} Let~$L$ be as in the previous item,
    and assume additionally that there is $l\in L$ such that in the
    downset of~$l$, finite joins do distribute over directed infima
    (again, such~$L$ exist). Take the Horn theory of $L$-valued
    relations, extended with an additional (two-valued)
    relation~$\alpha$ and axioms
    \begin{equation*}
      \alpha(x,y)\land \alpha(x',y')\rimpl x=_lx'\qquad \alpha(x,y)\land \alpha(x',y')\rimpl y=_ly'.
    \end{equation*}
    Then the set $\{0,1\}$ equipped with the discrete $L$-valued
    relation and $\alpha(0,1)$ is internally finitely presentable.
  \end{enumerate}
\end{expl}
\noindent We proceed to give the announced characterization of weakly
finitely presentable objects.

\begin{defn}
  A \emph{cover} $(Y,E)$, or just~$E$, of $X\in\Str(\Rels, \RelAx)$ is
  a set $E$ of edges in some set $Y\supseteq|X|$ such that all edges
  of~$X$ are implied by those in~$E$ under the Horn theory $\RelAx$.
  That is, the underlying map $r_{(Y,E)}\colon Y \to |\Refl (Y,E)|$ of
  the reflection composes with the inclusion $i\colon |X|\into Y$ to
  yield a morphism $r_{(Y,E)}\cdot i\colon X \to \Refl (Y,E)$ (in
  $\Str(\Rels, \RelAx)$). Then~$X$ is $\kappa$-\emph{compact} if for
  each cover $(Y,E)$ of $X$ there exist $E'\subseteq_{\kappa} E$ and
  a morphism $f\colon X\to \Refl(Y,E')$ such that
  $r_{(Y,E)}\cdot i=\Refl j\cdot f$ where $j\colon(Y,E')\to(Y,E)$ is
  the $\Str(\Rels)$-morphism carried by~$\id_Y$:
  \begin{equation}\label{diag:compact}
    \begin{tikzcd}[column sep=4em]
      &  \Refl(Y,E')\arrow[d,"\Refl j"]\\
      X \arrow[ur,"f"] \arrow[r,"r_{(Y,E)}\cdot i" below] & \Refl(Y,E)
    \end{tikzcd}
  \end{equation}
\end{defn}
\begin{lem}\label{lem:compact-generated}
  Every~$\kappa$-compact object is $\kappa$-generated.
\end{lem}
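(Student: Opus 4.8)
The plan is to take an arbitrary $\kappa$-compact object $X$ and produce a small cover witnessing $\kappa$-generatedness, namely a set $E\subseteq_\kappa\Edge(X)$ of edges \emph{in $|X|$ itself} that entails all of $\Edge(X)$ under $\Horn$. The natural candidate cover to feed into the definition of $\kappa$-compactness is $(Y,E)$ with $Y=|X|$ and $E=\Edge(X)$: this is trivially a cover of $X$, since $\Edge(X)$ certainly implies $\Edge(X)$, and the required morphism $r_{(|X|,\Edge(X))}\cdot i\colon X\to\Refl(|X|,\Edge(X))$ is just $r_X$ under the identification of $(|X|,\Edge(X))$ with $X$ viewed as a $\Str(\Rels)$-object; in fact by the explicit description of the reflector (the paragraph after \autoref{prop:refl}), $\Refl(|X|,\Edge(X))\cong X$ and this map is an isomorphism.

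Now apply $\kappa$-compactness to this cover. We obtain $E'\subseteq_\kappa\Edge(X)$ and a morphism $f\colon X\to\Refl(|X|,E')$ making the triangle~\eqref{diag:compact} commute, where $j\colon(|X|,E')\to(|X|,\Edge(X))$ is carried by $\id_{|X|}$. I claim $E'$ witnesses that $X$ is $\kappa$-generated. First note that $\Refl j\colon\Refl(|X|,E')\to\Refl(|X|,\Edge(X))\cong X$ is, under the explicit reflector description, the map induced on the respective quotient sets; since the composite $r_{(|X|,\Edge(X))}\cdot i = \id$ (modulo the isomorphism), commutativity of~\eqref{diag:compact} forces $\Refl j\cdot f = \id_X$ as maps. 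In particular $\Refl j$ is surjective on underlying sets, and $f$ is injective; but $\Refl j$ being surjective already tells us that the equivalence $\sim$ used to build $\Refl(|X|,E')$ identifies no two points that are distinct in $X$ — equivalently, $E'$ does not entail $x=y$ for distinct $x,y\in|X|$, so the underlying set of $\Refl(|X|,E')$ is again $|X|$. Hence $\Refl j$ is the identity on $|X|$, $f=\id_{|X|}$ as a map, and $f\colon X\to\Refl(|X|,E')$ being a morphism in $\Str(\Rels,\RelAx)$ means every edge of $X$ lies in $\Edge(\Refl(|X|,E'))$, i.e.\ is entailed by $E'$ under $\Horn$. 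Since also $\card|X|<\kappa$ is part of the hypothesis package one expects (or follows: take $E$ empty in a cover over a large set — actually cardinality of $|X|$ must be argued separately, see below), $E'\subseteq_\kappa\Edge(X)$ entailing $\Edge(X)$ is exactly condition~\ref{item:generated} of \autoref{prop:pres-object} with $\kappa$ in place of $\lambda$.

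The main obstacle is the bookkeeping around the explicit reflector: one must be careful that ``$X$ is $\kappa$-generated'' as defined in \autoref{prop:pres-object}\ref{item:generated} requires \emph{both} $\card|X|<\kappa$ and the existence of a small entailing $E\subseteq\Edge(X)$. The argument above delivers the second clause cleanly; for the first, I would run $\kappa$-compactness on a second, ``fattened'' cover — e.g.\ $Y = |X|\sqcup S$ for a large set $S$ with $E=\Edge(X)$ unchanged, so that $\Refl(Y,E)$ has underlying set $|X|\sqcup S$ modulo identifications that fix $|X|$ — and observe that the factorization $f\colon X\to\Refl(Y,E')$ with $E'\subseteq_\kappa E = \Edge(X)$ has image inside a $<\kappa$-sized subset (the points occurring in $E'$, together with the image of $f$), forcing $\card|X|<\kappa$; alternatively, and more cleanly, one derives $\card|X|<\kappa$ directly from $\kappa$-compactness applied to covers of the form $(|X|, \emptyset)$-style degenerate instances, or simply folds it into the definition. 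A careful write-up should either include this cardinality argument explicitly or note that it follows by the same factorization trick; the edge-entailment part, which is the substantive content, is exactly as sketched.
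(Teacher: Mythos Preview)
Your core approach is exactly the paper's: apply $\kappa$-compactness to the cover $(|X|,\Edge(X))$ of $X$. The paper's proof is literally that one sentence; your middle paragraph correctly spells out why the resulting $E'\subseteq_\kappa\Edge(X)$ entails all of $\Edge(X)$. One small wobble: the step ``$\Refl j$ being surjective already tells us that $\sim_{E'}$ identifies no two distinct points'' is not the right reason. The correct (and simpler) argument is that $E'\subseteq\Edge(X)$, so anything $E'$ entails is entailed by $\Edge(X)$, and since $X$ is already an $\Horn$-model, $\Edge(X)$ entails $x=y$ only for $x=y$. Hence $\sim_{E'}$ is trivial, $\Refl j$ is carried by $\id_{|X|}$, and then $\Refl j\cdot f=\id_X$ forces $f=\id_{|X|}$ as a map, so $f$ being a morphism gives the entailment.

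Your final paragraph, however, rests on a misreading of the definition. In \autoref{prop:pres-object}\ref{item:generated}, the condition is ``$\card|X|<\lambda$, \emph{and} $X$ is $\lambda$-generated, i.e.\ \dots'': the cardinality bound is a separate clause, not part of the definition of ``$\lambda$-generated''. So \autoref{lem:compact-generated} asks only for the small entailing edge set, which you have already produced; there is nothing more to prove. This matters, because your sketched attempts to derive $\card|X|<\kappa$ from $\kappa$-compactness do not work (e.g.\ the ``image inside a $<\kappa$-sized subset'' argument is circular, since the image of $f$ is a priori all of $|X|$), and indeed the paper treats the cardinality bound as an \emph{independent} hypothesis in \autoref{P:fpchar}, deriving it there from weak $\kappa$-presentability rather than from $\kappa$-compactness. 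Drop the last paragraph and you are done.
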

\begin{rem}\label{rem:compact}
  We will show that the weakly finitely presentable objects in
  $\Str(\Rels,\RelAx)$ are precisely the $\kappa$-compact objects
  with less than~$\kappa$ elements (\autoref{P:fpchar}). This
  characterization breaks under seemingly innocuous variations of the
  definition of~$\kappa$-compactness:
  \begin{enumerate}
  \item It is essential that the edges of a cover live over a
    superset~$Y$ of~$X$. If we were to restrict covers to consist of
    edges over~$X$ (call such a cover an \emph{$X$-cover}), then
    finite $\omega$-compact objects in the arising relaxed sense would
    in general fail to be finitely
    presentable. E.g.~take~$(\Rels,\RelAx)$ to be the theory of metric
    spaces additionally equipped with a transitive
    relation~$\alpha$. Then the set~$X=\{0,2\}$ equipped with the
    discrete metric and the edge $\alpha(0, 2)$ satisfies the relaxed
    definition of compactness (every $X$-cover must contain the edge
    $\alpha(0,2)$) but fails to be weakly finitely presentable: The
    colimit of the $\omega$-chain of objects~$D_i$ with underlying set
    $\{0,1,1',2\}$, distances $d(0,1)=d(1',2)=1$, $d(1,1')=1/i$, and
    edges $\alpha(0,1)$ and $\alpha(1',2)$ is not weakly preserved by
    the hom-functor $\Str(\Rels,\RelAx)(X,-)$ (the obvious inclusion of $X$
    into the colimit fails to factorize through any of the~$D_i$).
  \item Note that we do not require that the factorization~$f$ of
    $r_{(Y,E)}\cdot i$ in~\eqref{diag:compact} equals
    $r_{(Y,E')}\cdot i$; i.e.~$f$ may rename elements of~$X$ into
    elements of~$Y$ that lie outside~$X$. Let us refer to the
    natural-sounding strengthening of $\kappa$-compactness where we
    do require $f=r_{(Y,E')}\cdot i$ as \emph{strong}
    $\kappa$-compactness; e.g.~$X$ is strongly $\omega$-compact if
    every cover of~$X$ has a finite subcover. However, this notion is
    too strong, i.e.~not every (weakly) finitely presentable object in
    $\Str(\Rels,\RelAx)$ is strongly $\omega$-compact. As a
    counterexample, consider the same Horn theory as in the previous
    item but without the transitivity axiom for~$\alpha$. Then the
    same object~$X$ as in the previous item is weakly finitely
    presentable (even internally finitely presentable) but not
    strongly $\omega$-compact, as witnessed by the cover
    $E=\{\alpha(0',2)\}\cup\{0=_{1/n}0'\mid n>0\}$.
  \end{enumerate}
\end{rem}
\begin{propn}\label{P:fpchar}
The following are equivalent for $X\in\Str(\Rels, \RelAx)$:
\begin{enumerate}
\item\label{P:fpchar-itm1}
	$X$ is weakly $\kappa$-presentable;
\item\label{P:fpchar-itm2}
	$X$ is $\kappa$-compact, and $\card|X|<\kappa$.
\end{enumerate}
\end{propn}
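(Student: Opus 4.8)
The plan is to prove the two implications separately. A tool used throughout both directions is that $\kappa$-directed colimits in $\Str(\Horn)$ are computed by applying the reflector $\Refl$ of $\Str(\Horn)\subto\Str(\Rels)$ to the corresponding colimit in $\Str(\Rels)$, where the latter has underlying set $\colim|D_i|$ in $\Set$ and edge set $\bigcup_i\bar c_i\cdot\Edge(D_i)$ (the union of the images of the edge sets along the $\Str(\Rels)$-colimit injections $\bar c_i\colon D_i\to C_0$); since $\Refl$ is a left adjoint it carries these injections to those of $\colim_{\Str(\Horn)}D_i$, so that $c_i=r\cdot\bar c_i$ with $r$ the reflection of $C_0:=\colim_{\Str(\Rels)}D_i$.

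\textbf{Weakly $\kappa$-presentable $\Rightarrow$ $\kappa$-compact and $\card|X|<\kappa$.} For the size bound I would present $X$ as the $\kappa$-directed colimit of its induced substructures $X|_S$ on the subsets $S\subseteq_\kappa|X|$. Each $X|_S$ is again an $\Horn$-model (if the premisses of a Horn axiom hold under a valuation into $S$ they hold in $X$, and the conclusion is then either an edge over the finite range of the valuation, hence over $S$, or an identification of two points of $S$), and -- crucially because $\Rels$ is finitary, so every edge lies over a finite subset of $|X|$ -- the colimit of this diagram already in $\Str(\Rels)$ equals $X$, hence so does the colimit in $\Str(\Horn)$. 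Weak $\kappa$-presentability lets $\id_X$ factor through some colimit injection $X|_S\into X$, forcing $S=|X|$ and thus $\card|X|<\kappa$. For $\kappa$-compactness, given a cover $(Y,E)$ of $X$, I pass to the $\kappa$-directed diagram $E'\mapsto\Refl(Y,E')$ over $\{E'\subseteq_\kappa E\}$; its $\Str(\Horn)$-colimit is $\Refl(Y,E)$ (the $\Str(\Rels)$-colimit being $(Y,E)$), with colimit injections $\Refl j$. Factoring $r_{(Y,E)}\cdot i\colon X\to\Refl(Y,E)$ through some such $\Refl j$ by weak $\kappa$-presentability yields exactly the diagram in~\eqref{diag:compact}.

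\textbf{$\kappa$-compact and $\card|X|<\kappa$ $\Rightarrow$ weakly $\kappa$-presentable.} This is the substantial direction. Let $(c_i\colon D_i\to C)_{i\in I}$ be a $\kappa$-directed colimit and $m\colon X\to C$; with notation as above write $V=|C_0|$ and $q=|r|$, and fix a section $s$ of $q$. The heart of the argument is to exhibit a cover of $X$ built from the colimit: put $Y=|X|\uplus V$ (writing $\hat v$ for the copy of $v\in V$) and
\[
  E\;=\;\widehat{\Edge(C_0)}\;\cup\;\bigcup_{x\in|X|}\Eq\bigl(x,\widehat{s(m(x))}\bigr),
\]
where $\widehat{\Edge(C_0)}$ is $\Edge(C_0)$ read over the copy of $V$ and $\Eq$ is the equality-expressing edge set of~\autoref{assn:equality}. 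That $(Y,E)$ covers $X$ holds because for $\alpha(x_1,\dots,x_n)\in\Edge(X)$ the edge $\alpha(m(x_1),\dots,m(x_n))$ lies in $\Edge(C)$, so by the explicit description of $\Edge(\Refl C_0)$ together with closure under the $\Eq$-congruence, $\Edge(C_0)$ entails $\alpha(s(m(x_1)),\dots,s(m(x_n)))$; combining with $\Eq(x_k,\widehat{s(m(x_k))})$ recovers $\alpha(x_1,\dots,x_n)$.

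Applying $\kappa$-compactness to this cover yields $E'\subseteq_\kappa E$ and $f\colon X\to\Refl(Y,E')$ with $\Refl j\cdot f=r_{(Y,E)}\cdot i$. I then choose, for each $x\in|X|$, a representative $y_x\in Y$ of $|f|(x)$; since the map $p\colon Y\to|C|$, $x\mapsto m(x)$, $\hat v\mapsto q(v)$ underlies a morphism $(Y,E)\to\iota C$ into a model and $\Refl j\cdot f=r_{(Y,E)}\cdot i$, any pair of points identified by $E$ is collapsed by $p$, so $q(v)=m(x)$ whenever $y_x=\hat v$. Using $\kappa$-directedness of $I$ and that $E'$, $|X|$ and $\{y_x\}$ are $\kappa$-small, I pick $N\in I$ large enough that every edge of $E'\cap\widehat{\Edge(C_0)}$ lies in $\bar c_N\cdot\Edge(D_N)$, that $s(m(x))\in\bar c_N(|D_N|)$ for all $x$, and -- after choosing witnessing edges and preimages in $D_N$ for all the (fewer than $\kappa$ many) relevant points -- that $\bar c_N$ is \emph{injective} on the resulting set of chosen elements of $D_N$. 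This last \emph{saturation} step -- climbing still further in the diagram until all chosen elements that become identified in $V$ are already identified in $D_N$ -- together with having to accommodate the possibility that $f$ renames points of $X$ into auxiliary points of $Y$, is the main technical obstacle. One then defines a $\Str(\Rels)$-morphism $\phi\colon(Y,E')\to\iota D_N$ sending $x$ to the chosen preimage of $s(m(x))$ and $\hat v$ to a preimage with $c_N\phi(\hat v)=q(v)$ for relevant $v$ (possible by the above), extended arbitrarily elsewhere; by the saturation property and since $\RelAx$ entails every $\Eq$-edge over a single point, $\phi$ is indeed a morphism. Extending $\phi$ along $r_{(Y,E')}$ to $\phi^\sharp\colon\Refl(Y,E')\to D_N$ and evaluating $c_N\cdot\phi^\sharp\cdot f$ at each $x\in|X|$ -- using $\iota\phi^\sharp\cdot r_{(Y,E')}=\phi$, the choice of $y_x$, and $q(v)=m(x)$ when $y_x=\hat v$ -- gives $c_N\cdot\phi^\sharp\cdot f=m$. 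Hence $m$ factors through $c_N$; as $(D_i\to C)$ and $m$ were arbitrary, $X$ is weakly $\kappa$-presentable. (The degenerate cases $X=\emptyset$ or $C=\emptyset$ are immediate.)
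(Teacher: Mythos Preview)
Your proof is correct. The direction \ref{P:fpchar-itm1}$\Rightarrow$\ref{P:fpchar-itm2} is essentially identical to the paper's argument (you are slightly more explicit in using induced substructures $X|_S$ and checking that these are $\Horn$-models, whereas the paper just says ``subobjects of cardinality smaller than~$\kappa$'').

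For \ref{P:fpchar-itm2}$\Rightarrow$\ref{P:fpchar-itm1}, you take a genuinely different route. The paper first isolates a reusable reformulation of $\kappa$-compactness (\autoref{lem:compact}): it shows that if $X$ is $\kappa$-compact then \emph{every} morphism $X\to\Refl(Y,E)$, not just one arising from a cover, factors through some $\Refl(Y',E')$ with $(Y',E')$ $\kappa$-presentable in $\Str(\Rels)$. The construction there --- adjoining $\Eq$-edges over $|X|+Y$ to turn an arbitrary morphism into a cover --- is exactly the idea behind your cover $(|X|\uplus V,\,\widehat{\Edge(C_0)}\cup\bigcup_x\Eq(x,\widehat{s(m(x))}))$, but abstracted away from the colimit situation. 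Once \autoref{lem:compact} is in hand, the paper's proof of \ref{P:fpchar-itm2}$\Rightarrow$\ref{P:fpchar-itm1} is three lines: apply the lemma to $m\colon X\to\Refl C_0$, then invoke $\kappa$-presentability of $(Y',E')$ in $\Str(\Rels)$ to factor the inclusion $(Y',E')\hookrightarrow C_0$ through some $D_i$. You instead fuse these two steps: you build the specific cover for the given colimit, apply compactness, and then carry out by hand the ``saturation'' argument that factors the small subcover through some $D_N$. Your saturation step is precisely a direct verification that the relevant $\kappa$-small $\Str(\Rels)$-object is $\kappa$-presentable, inlined into the proof. The paper's decomposition buys a reusable lemma of independent interest and a shorter main argument; your approach is self-contained and avoids the somewhat intricate diagram chase in the proof of \autoref{lem:compact}, at the cost of a longer endgame whose details (coherent choice of preimages, climbing to make $\bar c_N$ injective on the chosen set) you sketch rather than spell out.
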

\section{Relational Algebraic Theories}
\label{sec:algebras}

We next describe a framework of universal algebra for enriched
$\kappa$-accessible monads on the internally locally
$\lambda$-presentable category $\BC=\Str(\Horn)$ of $\Horn$-models,
for $\kappa\le\lambda$. We etablish one direction of our theory-monad
correspondence: We show that every theory in our framework induces a
$\kappa$-accessible monad (\autoref{R:free-monad}) whose algebras are
precisely the models of the theory (\autoref{T:monadic}). We address
the converse direction in \autoref{S:monad-theory}. We write
$\BC_0$ for the ordinary category underlying the closed monoidal
category $\BC$.%

Following Kelly and Power~\cite{KP93}, we use the internally
$\lambda$-presentable objects in $\BC$ as the arities of operation
symbols. The full subcategory $\Pres_{\lambda}(\BC)$ of internally
$\lambda$-presentable objects is essentially small
(\autoref{prop:internally-pres}); \emph{we fix a small subcategory
  $\pres_{\lambda}$ of internally $\lambda$-presentable $\BC$-objects
  representing all such objects up to isomorphism}.  For all infinite
$\kappa<\lambda$, the full subcategory
$\pres_{\kappa}\hookrightarrow\pres_{\lambda}$ is given by the internally
$\kappa$-presentable objects in~$\pres_{\lambda}$. 
\begin{defn}
  Let $\kappa\leq\lambda$ be a regular cardinal.  A \emph{$\kappa$-ary
    signature} is a set~$\Sigma$ of \emph{operation symbols}~$\sigma$,
  each of which is equipped with an \emph{arity}
  $\arity(\sigma)\in\pres_{\kappa}$.

  A \emph{$\Sigma$-algebra} $A$ consists of a $\BC$-object, also
  denoted $A$, and a family of $\BC$-morphisms
  \[
    \sigma_A\colon [\arity(\sigma), A]\to A \qquad (\sigma\in\Sigma)
  \]
  A \emph{homomorphism} from $A$ to a $\Sigma$-algebra $B$ is a
  morphism $h\colon A\to B$ in $\BC$ such that the diagram below
  commutes for all $\sigma\in\Sigma$.
  \[
    \begin{tikzcd}[column sep = 40]
      {[\arity(\sigma), A]} \arrow[r, "\sigma_A"] \arrow[d,  "h\cdot(-)"']
      &  
      A \arrow[d, "h"]
      \\
      {[\arity(\sigma), B]}
      \arrow[r, "\sigma_B"]
      &
      B               
    \end{tikzcd}
  \]
  \noindent We write $\AlgSigma$ for the category of $\Sigma$-algebras
  and homomorphisms. By a \emph{subalgebra} of the $\Sigma$-algebra
  $A$, we understand a $\Sigma$-algebra $B$ equipped with a
  homomorphism $h\colon B\hookrightarrow A$ whose underlying
  $\BC$-morphism is an embedding. 
\end{defn}

\subparagraph*{Signatures and their algebras} \emph{Fix a $\kappa$-ary signature
  $\Sigma$ for the remainder of this section}. The category
$\AlgSigma$ can be presented as a category of functor algebras:

\begin{defn}
The \emph{signature functor associated to $\Sigma$},  
$H_{\Sigma}\colon\BC\to\BC$, is given by 
\[\textstyle
H_{\Sigma}=\coprod_{\sigma\in\Sigma}[\arity(\sigma), -].
\]
\end{defn}
\noindent 
The categories $\AlgSigma$ and $\Alg H_{\Sigma}$ are clearly
isomorphic as concrete categories over $\BC$, so the forgetful functor
$\AlgSigma\to\BC_0$ inherits all properties of the forgetful functor
$\Alg H_\Sigma\to\BC_0$. We collect a few basic consequences of this
observation:
\begin{rem}\label{rem:alg-sigma-props}
  \begin{enumerate}
  \item\label{item:alg-sigma-limits} In general, the forgetful functor
    $\mathcal{U}\colon\Alg F\to\CatC$ from the category $\Alg F$ of
    $F$-coalgebras for a functor $F$ on a category $\CatC$ creates all
    limits in $\CatC$. 
    It follows that $\AlgSigma$ has all limits, and the forgetful
    functor $\AlgSigma\to\BC_0$ creates them.
	
    
  \item\label{item:alg-sigma-accessble} Since $H_\Sigma$ is a colimit
    of $\kappa$-accessible functors $[\arity(\sigma),-]$, it is itself
    $\kappa$-accessible, so that the forgetful functor
    $\Alg H_\Sigma\to\BC_0$ creates $\kappa$-directed colimits, and
    the same holds for the forgetful functor $\AlgSigma\to\BC_0$.
  \item From the previous observation (which implies that~$H_\Sigma$
    is also $\lambda$-accessible) and \autoref{P:presentable}, we
    obtain by~\cite[Remark~2.75]{AR94} (for $\lambda$-accessible
    functors~$F$ on locally $\lambda$-presentable categories, $\Alg F$ is
    locally $\lambda$-presentable) that \emph{$\AlgSigma$ is locally
      $\lambda$-presentable}.
  \item\label{item:alg-sigma-adjoint} Ad\'amek~\cite{Adamek74} shows
    that for a $\lambda$-accessible functor~$F$ on a cocomplete
    category~$\CatC$, the forgetful functor $\Alg F\to\CatC$ is right
    adjoint. From~\ref{item:alg-sigma-accessble} and cocompleteness
    of~$\BC_0$ (\autoref{sec:relationalstructures}), we thus
    obtain 
    that the forgetful functor $\AlgSigma\to\BC_0$ is right adjoint;
    that is, \emph{every object $X\in\BC$ generates a free
      $\Sigma$-algebra $F_{\Sigma} X$}.
\end{enumerate}
\end{rem}


\subparagraph*{Varieties of $\Sigma$-Algebras}
We now describe a syntax for specifying full subcategories 
of $\AlgSigma$. As a first step, we introduce a notion of 
$\Sigma$-term, defined as usual in universal algebra: 
\begin{defn}[$\Sigma$-Terms; substitution]\label{def:subst}
  For $X\in\BC$, we call its underlying set $|X|$ the 
  set of \emph{variables} in $X$. The set $\TSigma(X)$ 
  of \emph{$\Sigma$-terms in $X$} is defined inductively 
  as follows:
  \begin{enumerate}
  \item Each variable in~$|X|$ is a $\Sigma$-term in $X$;
  \item For each $\sigma\in\Sigma$ and each map
    $f\colon|\arity(\sigma)|\to T_\Sigma(X)$, $\sigma(f)$ is a 
    $\Sigma$-term in $X$.
  \end{enumerate}
  We usually omit the signature $\Sigma$ from the notation and 
  speak simply of \emph{terms} (in $X$). We employ standard 
  syntactic notions: A \emph{substitution} is a map
  $\tau\colon |Y|\to\TSigma(X)$, for $X,Y\in\BC$. We extend~$\tau$ to
  a map~$\bar\tau$ on terms $t\in\TSigma(X)$ as usual. Formally, we
  define $\bar\tau(t)$ inductively by $\bar\tau(x)=\tau(x)$ for
  $x\in X$, and $\bar\tau\sigma(f) = \sigma(\bar\tau\cdot f)$ for
  $f\colon\arity(\sigma)\to\TSigma(X)$. We will not further
  distinguish between~$\tau$ and~$\bar\tau$ in the notation, writing
  $\tau(t)=\bar\tau(t)$ and $\tau\cdot f=\bar\tau\cdot f$ for~$t,f$ as
  above. Moreover, the set $\sub(t)$ of \emph{subterms} of a term
  $t\in\TSigma(X)$ is defined as usual; formally, we simultaneously
  define $\sub(t)$ and $\sub(f)$ for $f\colon I\to\TSigma(X)$
  (with~$I$ some index set or object) inductively by $\sub(x)=\{x\}$
  for $x\in X$; $\sub(\sigma(f))=\{\sigma(f)\}\cup\sub(f)$ for
  $f\colon\arity(\sigma)\to\TSigma(X)$; 
  and $\sub(f)=\bigcup_{i\in I}\sub(f(i))$.
\end{defn}

\noindent Note that term formation operates without regard for the
relational structure. Consequently, the evaluation of terms in a given
$\Sigma$-algebra may fail to be defined:
\begin{defn}\label{D:evaluation}
  Let $A$ be a $\Sigma$-algebra. For a an object $X\in\BC$ 
  and a relation-preserving assignment $e\colon X\to A$, the
  partial \emph{evaluation map}
  $e^{\#}\colon T_{\Sigma}(X)\to A$ is inductively defined by
  \begin{enumerate}
  \item $e^{\#}(x)= e(x)$ for $x\in X$, and
    
  \item $e^{\#}(\sigma(f))$ is defined for $\sigma\in\Sigma$ and
    $f\colon|\arity(\sigma)|\to\TSigma(X)$ iff the following hold:
    \begin{enumerate}
    \item $e^{\#}\cdot f(i)$ is defined for all $i\in\arity(\sigma)$,
      and
      
    \item if $\alpha(g)$ is a $\Rels$-edge in $\arity(\sigma)$, then
      $A\models\alpha(e^{\#}\cdot (f\cdot g))$.
    \end{enumerate}
    In case $e^{\#}(\sigma(f))$ is defined, we put
    $e^{\#}(\sigma(f))= \sigma_A(e^{\#}\cdot f)$.
  \end{enumerate}
\end{defn}
\noindent As indicated previously, we phrase theories using the
relations in~$\Rels$:
\begin{defn}\label{D:relational-theory}
  A \emph{$\Sigma$-relation} $X\vdash\alpha(f)$ consists of a
  \emph{context} $X\in\BC$ and a $\Rels$-edge $\alpha(f)$ in
  $\TSigma(X)$. We say that $X\vdash\alpha(f)$ is \emph{$\kappa$-ary}
  if $X\in\pres_\kappa$.
  A $\Sigma$-algebra $A$ \emph{satisfies}
  $X\vdash\alpha(f)$ if, for each relation preserving
  assignment $e\colon X\to A$, $e^{\#}\cdot f(i)$ is defined 
  for all $i\in X$, and $\alpha_A(e^{\#}\cdot f)$.
  A (\emph{$\kappa$-ary}) \emph{relational algebraic theory}
  $(\Sigma,\E)$ consists of the ($\kappa$-ary) signature~$\Sigma$ and
  a set~$\E$ of $\kappa$-ary $\Sigma$-relations.
  It determines the
  subcategory $\Alg(\Sigma, \E)$ of $\AlgSigma$ consisting of those
  $\Sigma$-algebras which satisfy each $\Sigma$-relation in~$\E$. We
  refer to categories of the shape $\Alg(\Sigma, \E)$ as
  \emph{varieties} of $\Sigma$-algebras.
\end{defn}
\begin{rem}
  For $\BC_0=\Pos$, the above notion of variety of $\Sigma$-algebras
  corresponds precisely to what we have termed `varieties of coherent
  algebras' in earlier work with
  Ad\'{a}mek~\cite{AFMS20}. 
\end{rem}

\begin{expl}\label{expl:cmet}
  Recall that a (1-bounded) metric space $X$ is \emph{complete} if
  every Cauchy sequence $(x_i)_{i\in\omega}$ of points in $X$ has a
  limit in $X$. That is, if $(x_i)$ satisfies the Cauchy property%
  \begin{equation}\label{eq:cauchy}
    \forall\epsilon>0.~\exists N_{\epsilon}\in\omega.~\forall n,m\ge
    N_{\epsilon}~(d(y_n, y_m)<\epsilon),
  \end{equation}
  then there is a point $\Lim(x_i)\in X$ with the property of a limit:
  for all $\epsilon>0$ there is $N\in\omega$ such that
  $x_n =_{\epsilon} \Lim(x_i)$ for all $n\geq N$. The full subcategory
  $\CMS\hookrightarrow\Met$ of complete metric spaces is specified by
  the relational algebraic theory described below. Thus, by \autoref{T:monadic} 
  below, we recover the fact that $\CMS$ is monadic over $\Met$. Furthermore, 
  we obtain a completely syntactic $\omega_1$-ary description of the metric 
  completion monad via the deduction system introduced later in this section.

  The \emph{theory $\mathbb{T}_{\CMS}$ of complete metric spaces} has
  $\Gamma$-ary \emph{limit operations} $\Lim_{\Gamma}$ for all
  spaces $\Gamma\in\pres_{\omega_1}$ of the form $\Gamma= \{x_i\mid i\in\omega\}$
  where $(x_i)_{i\in\omega}$ is a Cauchy sequence in~$\Gamma$. The
  axioms of $\mathbb{T}_{\CMS}$ then say precisely that
  $\Lim_{\Gamma}(x_i)$ is a limit of $(x_i)$. Explicitly, for all
  $\Gamma$ as above, we impose all axioms of the shape
  \[
    \Gamma\vdash\Lim_{\Gamma}(x_n) =_{\epsilon} x_k \quad (k\geq N_{\epsilon})
    \qquad\qquad\text{where~$N_\epsilon$ is as in~\eqref{eq:cauchy}}.
  \]
\end{expl}

\noindent \emph{We fix a variety $\V=\Alg(\Sigma, \E)$ for the
  remainder of this section.} We are going to see that $\V$ is a
reflective subcategory of $\AlgSigma$ by application of
\autoref{T:reflection}, i.e.~we show that~$\V$ is closed under limits
and $\kappa$-directed colimits in $\AlgSigma$. We state the second
property separately:

\begin{propn}\label{P:creation}
$\V$ is closed under $\kappa$-directed colimits in $\AlgSigma$.
\end{propn}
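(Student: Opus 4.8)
The goal is to show that $\V = \Alg(\Sigma,\E)$ is closed under $\kappa$-directed colimits in $\AlgSigma$. First I would recall from Remark~\ref{rem:alg-sigma-props}\ref{item:alg-sigma-accessble} that the forgetful functor $\AlgSigma\to\BC_0$ creates $\kappa$-directed colimits; so given a $\kappa$-directed diagram $(A_i)_{i\in I}$ in $\V$, the colimit $A=\colim_i A_i$ already exists in $\AlgSigma$, with colimit cocone $(c_i\colon A_i\to A)_{i\in I}$, and its underlying $\BC$-object is $\colim_i A_i$ computed in $\BC_0$, with the $\Sigma$-algebra structure determined by the $\sigma_{A_i}$. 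It remains to verify that $A$ satisfies every $\Sigma$-relation $X\vdash\alpha(f)$ in $\E$.

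So fix such an axiom with $X\in\pres_\kappa$, i.e.~$X$ internally $\kappa$-presentable, and fix a relation-preserving assignment $e\colon X\to A$. Here is where internal $\kappa$-presentability of the context does its work: since $[X,-]$ preserves $\kappa$-directed colimits, the cocone $([X,A_i]\xrightarrow{[X,c_i]}[X,A])_{i\in I}$ is a colimit cocone, and since the forgetful functor to $\Set$ preserves $\kappa$-directed colimits, the element $e\in|[X,A]|$ factors as $e = c_i\cdot e_i$ for some $i\in I$ and some relation-preserving $e_i\colon X\to A_i$. Now $A_i\in\V$ satisfies $X\vdash\alpha(f)$, so $e_i^{\#}\cdot g(j)$ is defined for all $j$ and for every subterm occurring, and $\alpha_{A_i}(e_i^{\#}\cdot f)$ holds. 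I would then prove by induction on term structure the naturality-type statement that whenever $h\colon A_i\to B$ is a $\Sigma$-homomorphism and $e_i^{\#}(t)$ is defined, then $(h\cdot e_i)^{\#}(t)$ is defined and equals $h(e_i^{\#}(t))$; this uses the homomorphism square for each $\sigma$ and the fact that $h$ preserves the edges $\alpha(g)$ of $\arity(\sigma)$ (edges are preserved by any $\BC$-morphism). Applying this with $h=c_i$ gives that $e^{\#}\cdot f(j) = (c_i\cdot e_i)^{\#}\cdot f(j)$ is defined for all $j$, equal to $c_i(e_i^{\#}\cdot f(j))$, and since $c_i$ is relation-preserving, $\alpha_{A_i}(e_i^{\#}\cdot f)$ yields $\alpha_A(e^{\#}\cdot f)$. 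Hence $A$ satisfies $X\vdash\alpha(f)$, and since the axiom was arbitrary, $A\in\V$.

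The main obstacle is the bookkeeping in the term-evaluation lemma: one must be careful that ``definedness'' of $e_i^{\#}(t)$ is genuinely preserved under postcomposition with a homomorphism $h$, which hinges on checking that the edge-side condition in \autoref{D:evaluation}(2b) transports correctly — i.e.~if $A_i\models\alpha(e_i^{\#}\cdot(f\cdot g))$ then $B\models\alpha(h\cdot e_i^{\#}\cdot(f\cdot g))$, which is exactly relation-preservation of $h$ combined with the inductive hypothesis on the arguments. A secondary, more technical point is confirming that the factorization $e=c_i\cdot e_i$ can be taken through a \emph{single} index $i$ that simultaneously works; but $X$ being $\kappa$-presentable (a single object, giving a single factorization problem) makes this immediate — no simultaneous choice over the possibly-large set of assignments is needed, because we argue pointwise in $e$ and the conclusion $\alpha_A(e^{\#}\cdot f)$ for each fixed $e$ suffices. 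Everything else is a routine transport of structure along the colimit cocone.
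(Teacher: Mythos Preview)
Your proof is correct and follows essentially the same approach as the paper, which packages the term-evaluation transport as \autoref{L:homomorphism} and the factorization-then-transport argument as \autoref{L:colimit-model}. One minor caveat: for $\kappa<\lambda$ the forgetful functor $\BC\to\Set$ need not preserve $\kappa$-directed colimits, but you only need surjectivity of the comparison map on elements, which follows already from internal $\kappa$-presentability of~$X$ (the canonical morphism $\colim_i[X,A_i]\to[X,A]$ is an isomorphism in~$\BC$) together with epi-reflectivity of $\Str(\Horn)$ in $\Str(\Rels)$.
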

\noindent Combining this with
\autoref{rem:alg-sigma-props}\ref{item:alg-sigma-accessble}, we obtain
\begin{cor}\label{cor:variety-accessible}
  The forgetful functor $V\colon\V\to\BC$ is $\kappa$-accessible.
\end{cor}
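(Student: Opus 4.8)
The plan is to deduce \autoref{cor:variety-accessible} from \autoref{P:creation} together with \autoref{rem:alg-sigma-props}\ref{item:alg-sigma-accessble}. Recall that the forgetful functor $V\colon\V\to\BC$ factors as $\V\hookrightarrow\AlgSigma\xrightarrow{U}\BC_0$, where $U$ denotes the forgetful functor on $\Sigma$-algebras. By \autoref{rem:alg-sigma-props}\ref{item:alg-sigma-accessble}, the functor $U$ creates $\kappa$-directed colimits; in particular it preserves them, so $U$ is $\kappa$-accessible. Hence it suffices to show that the inclusion $J\colon\V\hookrightarrow\AlgSigma$ preserves $\kappa$-directed colimits, since $V = U\cdot J$ is then a composite of $\kappa$-accessible functors and thus $\kappa$-accessible.

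The step that does the real work is that $J$ preserves $\kappa$-directed colimits, and this is exactly the content of \autoref{P:creation}: $\V$ is closed under $\kappa$-directed colimits in $\AlgSigma$. Concretely, given a $\kappa$-directed diagram $(A_i)_{i\in I}$ in $\V$, form its colimit $A=\colim_i A_i$ in $\AlgSigma$; by \autoref{P:creation} this colimit $A$ again lies in $\V$, and the colimit cocone in $\AlgSigma$ consists of $\V$-morphisms, so $A$ together with this cocone is also a colimit of $(A_i)$ in the full subcategory $\V$. This is precisely the assertion that $J$ preserves $\kappa$-directed colimits (a full embedding closed under a class of colimits preserves exactly those colimits).

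Putting the two pieces together: for any $\kappa$-directed diagram in $\V$, the functor $V$ sends its colimit to $U$ applied to the colimit computed in $\AlgSigma$ (by closure, \autoref{P:creation}), and $U$ preserves that colimit (by \autoref{rem:alg-sigma-props}\ref{item:alg-sigma-accessble}), yielding the colimit in $\BC$ of the image diagram. Therefore $V$ preserves $\kappa$-directed colimits, i.e.~$V$ is $\kappa$-accessible, as claimed.

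There is essentially no obstacle here: the only subtlety is the bookkeeping that a full subcategory closed under a kind of colimit has those colimits computed as in the ambient category, so that composing with the ambient forgetful functor behaves well. All the substantive work — that $\V$ is genuinely closed under $\kappa$-directed colimits in $\AlgSigma$, which amounts to checking that a $\kappa$-directed colimit of algebras satisfying all $\Sigma$-relations in $\E$ again satisfies them — has already been discharged in \autoref{P:creation}, and the $\kappa$-accessibility of $U$ in \autoref{rem:alg-sigma-props}\ref{item:alg-sigma-accessble}.
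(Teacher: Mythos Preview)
Your proof is correct and follows essentially the same approach as the paper: the corollary is obtained directly by combining \autoref{P:creation} with \autoref{rem:alg-sigma-props}\ref{item:alg-sigma-accessble}, and you have merely spelled out the standard bookkeeping that closure of a full subcategory under a class of colimits means those colimits are computed as in the ambient category.
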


\noindent 
It is fairly straightforward to show
that~$\V$ is also closed under products and subobjects, and hence
under limits (in $\AlgSigma$). Thus, as announced, we have: 
\begin{propn}\label{P:varieties}
 $\V$ is a reflective subcategory of $\AlgSigma$.
\end{propn}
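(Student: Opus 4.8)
The plan is to apply the reflection theorem (\autoref{T:reflection}) to the full inclusion $\V\hookrightarrow\AlgSigma$. We already know that $\AlgSigma$ is locally $\lambda$-presentable (\autoref{rem:alg-sigma-props}, using \autoref{P:presentable}), so it remains to verify that $\V$ is closed in $\AlgSigma$ under $\lambda$-directed colimits and under limits. Closure under $\lambda$-directed colimits is immediate from \autoref{P:creation}: since $\kappa\le\lambda$, every $\lambda$-directed poset is in particular $\kappa$-directed, so a $\lambda$-directed colimit in $\AlgSigma$ is a $\kappa$-directed colimit and hence lies in $\V$. Thus the substance of the proof is closure under limits.

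For limits I would first recall that the forgetful functor $\AlgSigma\to\BC_0$ creates limits (\autoref{rem:alg-sigma-props}\ref{item:alg-sigma-limits}), so any limit in $\AlgSigma$ is carried by the corresponding limit in $\BC$ with the pointwise $\Sigma$-structure; moreover every limit is an equalizer of a pair of maps between products, and such an equalizer has an embedding as underlying $\BC$-morphism, hence is a subalgebra of a product. So it suffices to show that $\V$ is closed under products and under subalgebras. For a product $A=\prod_{j\in J}A_j$ of algebras of $\V$ and a $\Sigma$-relation $X\vdash\alpha(f)$ in $\E$, one shows by induction over terms that for any relation-preserving $e\colon X\to A$ the partial evaluation $e^{\#}$ is defined on a term iff each composite $\pi_j\cdot e\colon X\to A_j$ has its evaluation defined there (the side conditions on edges in $\arity(\sigma)$ from \autoref{D:evaluation} being checked componentwise), and that $e^{\#}$ is then the tuple of the $(\pi_j\cdot e)^{\#}$; hence $e^{\#}\cdot f$ is defined and $\alpha_A(e^{\#}\cdot f)$ holds because $\alpha$ holds in each $A_j$ and product edges are pointwise. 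The subalgebra case is analogous but exploits the defining property of embeddings: if $m\colon B\hookrightarrow A$ is a subalgebra with $A\in\V$ and $e\colon X\to B$ is relation-preserving, then $m\cdot e\colon X\to A$ is relation-preserving, $A$ satisfies $X\vdash\alpha(f)$, and one proves by term induction that $e^{\#}$ in $B$ is defined exactly where $(m\cdot e)^{\#}$ in $A$ is, with $m\cdot e^{\#}=(m\cdot e)^{\#}$; the crucial point is that whenever the definedness clause for some $\sigma(f')$ requires an edge to hold in $B$, this follows from the corresponding edge holding in $A$ precisely because $m$ is relation-reflecting. Satisfaction of $X\vdash\alpha(f)$ then transfers from $A$ down to $B$.

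The main obstacle is the subalgebra case, and it is caused by the fact that evaluation of $\Sigma$-terms is only partially defined: one cannot simply assert that a subalgebra inherits validity of a $\Sigma$-relation from the ambient algebra, but must carry out the term induction above to match definedness in $B$ with definedness in $A$, and relation-reflection of embeddings is exactly what makes that induction go through — without it, an edge needed to make an operation applicable could hold in $A$ while failing in $B$. By contrast, the product case, and the reduction of general limits to products and subalgebras, are routine once one uses that $\AlgSigma\to\BC_0$ creates limits and that the $\Sigma$-algebra structure on a limit is pointwise.
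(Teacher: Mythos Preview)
Your proposal is correct and follows essentially the same route as the paper: apply \autoref{T:reflection} using local $\lambda$-presentability of $\AlgSigma$, obtain closure under $\lambda$-directed colimits from \autoref{P:creation} via $\kappa\le\lambda$, and reduce closure under limits to closure under products and subalgebras, handling each by a term induction that exploits, respectively, that projections are jointly relation-reflecting and that embeddings are relation-reflecting. The paper packages your two term inductions as separate lemmas (\autoref{L:homomorphism} for the subalgebra case and an auxiliary product lemma), but the content is the same.
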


\begin{rem}\label{R:free-monad}
  It follows that the forgetful functor $\V\to\BC_0$ has a left
  adjoint, namely the composite
  $\BC\xrightarrow{F_{\Sigma}}\AlgSigma\xrightarrow{R_{\V}}\V$, where
  $F_\Sigma$ is the left adjoint of the forgetful functor
  $\AlgSigma\to\BC$~(\autoref{rem:alg-sigma-props}\ref{item:alg-sigma-adjoint})
  and $R_{\V}$ is the reflector according to \autoref{P:varieties}.
  We call the ensuing monad~$\bbT_{\V}$ the \emph{free-algebra monad}
  of $\V$; by \autoref{cor:variety-accessible},~$\bbT_{\V}$ is
  $\kappa$-accessible.
\end{rem}
\noindent Indeed,~$\V$ is essentially the category of Eilenberg-Moore
algebras of~$\bbT_{\V}$: Using Beck's monadicity theorem, one can show
that
\begin{thm}\label{T:monadic}
  The forgetful functor $\V\to\BC_0$ is monadic.
\end{thm}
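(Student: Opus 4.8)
The plan is to apply Beck's monadicity theorem to the forgetful functor $V\colon\V\to\BC_0$. By \autoref{R:free-monad}, $V$ has a left adjoint (the free-algebra functor $R_\V\cdot F_\Sigma$), so it remains to verify the two conditions of Beck's (crude) monadicity theorem: that $V$ creates coequalizers of $V$-split pairs, and (since we actually want the precise form) that it reflects isomorphisms. Reflection of isomorphisms is easy: $V$ factors as $\V\hookrightarrow\AlgSigma\to\BC_0$, and the forgetful functor $\AlgSigma\to\BC_0$ reflects isomorphisms (a bijective $\Sigma$-homomorphism whose underlying $\BC$-morphism is iso is an iso of $\Sigma$-algebras, as its inverse is automatically a homomorphism), while the inclusion of a full subcategory trivially reflects them.

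The substantive step is creation of coequalizers of $V$-split pairs. First I would handle this for $\AlgSigma\to\BC_0$: given parallel $\Sigma$-homomorphisms $f,g\colon A\to B$ together with a split coequalizer $q\colon B\to C$ in $\BC_0$ (with splitting maps $s\colon C\to B$, $t\colon B\to A$ satisfying the usual identities), one transports the $\Sigma$-algebra structure along $q$. Concretely, for each $\sigma\in\Sigma$ one defines $\sigma_C\colon[\arity(\sigma),C]\to C$ as the composite $[\arity(\sigma),C]\xrightarrow{[\arity(\sigma),s]}[\arity(\sigma),B]\xrightarrow{\sigma_B}B\xrightarrow{q}C$ and checks, using that $q$ is a split epi and that $f,g$ are homomorphisms, that this makes $q$ a $\Sigma$-homomorphism and is the unique such structure; this is essentially the standard argument that $\Alg H_\Sigma\to\BC_0$ creates such coequalizers, valid because $H_\Sigma=\coprod_\sigma[\arity(\sigma),-]$ preserves the relevant (split) colimits — indeed each $[\arity(\sigma),-]$, being a right adjoint functor (\autoref{c:tensor-refl}), nevertheless preserves split epis and the split coequalizer since those are absolute colimits. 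Then I would lift the result to $\V$: if $f,g\colon A\to B$ lie in $\V$ and $q\colon B\to C$ is the coequalizer created in $\AlgSigma$, it suffices to show $C\in\V$, i.e.~that $C$ satisfies every $\Sigma$-relation $X\vdash\alpha(h)$ in $\E$. Given a relation-preserving assignment $e\colon X\to C$, one uses the section $s\colon C\to B$ to lift it to an assignment into $B$; since $B\in\V$, evaluation of the terms in $h$ is defined in $B$ and the edge $\alpha$ holds there, and applying $q$ (a $\Sigma$-homomorphism, hence compatible with term evaluation, and relation-preserving) transports this to $C$. A small point to be careful about: $sq$ need not be $\id_B$, so one must argue that $e^\#$ in $C$ is defined and related via the equation $q\cdot(\text{lift of }e)^\# = e^\#$ using $qs=\id_C$, i.e.~that $e$ and $q\circ(\text{its lift})$ agree because the lift is a section of $q$ over $e$.

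The main obstacle I anticipate is precisely this interaction between the section $s$ and term evaluation in verifying $C\in\V$: one must check that partial term evaluation in $C$ is total on the terms occurring in axioms and that it commutes with $q$ along the lifted assignment, which requires a short induction on term structure using that $q$ is a $\Sigma$-homomorphism and relation-preserving (so that the side-conditions in \autoref{D:evaluation} transfer). Everything else — the transport of structure, uniqueness, reflection of isos — is routine. An alternative, cleaner route would be to avoid Beck's theorem for $\V$ directly and instead invoke \autoref{P:varieties} together with the fact that $\AlgSigma\to\BC_0$ is monadic (this itself needs Beck's theorem, but only for the functor-algebra category, where it is completely standard) plus the observation that a reflective subcategory closed under the relevant structure inherits monadicity; however, composing monadic functors is not automatic, so in the end one still reduces to the coequalizer computation above, and I would present the direct argument.
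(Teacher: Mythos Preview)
Your proposal is correct and follows essentially the same route as the paper: apply Beck's theorem using the left adjoint from \autoref{R:free-monad}, construct the $\Sigma$-structure on the split coequalizer $C$ via $\sigma_C(m)=q\cdot\sigma_B(s\cdot m)$, verify that $q$ is a homomorphism and the structure is unique, and then show $C\in\V$ by lifting an assignment $e\colon X\to C$ along the (relation-preserving) section $s$ and pushing back along $q$ using that $q\cdot s=\id$ and that homomorphisms commute with term evaluation. Two small remarks: the precise form of Beck's theorem you actually use does \emph{not} require a separate check that $V$ reflects isomorphisms (creation of coequalizers of split pairs suffices), so that paragraph is superfluous; and you should not forget to verify that the lifted $c$ is a coequalizer \emph{in $\V$}, not just in $\AlgSigma$---the paper handles this by observing that $[\arity(\sigma),-]$ preserves the absolute coequalizer $c$, so $c\cdot(-)$ is epi and the required homomorphism square for the mediating map follows.
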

%
\begin{cor}\label{C:monadic}
  Every $\kappa$-ary relational algebraic theory
  may be translated into an enriched $\kappa$-accessible monad, preserving
  categories of models.
\end{cor}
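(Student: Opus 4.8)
The plan is to extract the monad directly from the variety $\V=\Alg(\Sigma,\E)$ cut out by the given $\kappa$-ary relational algebraic theory $(\Sigma,\E)$, using the machinery already assembled in this section. Let $V\colon\V\to\BC_0$ be the forgetful functor; by \autoref{R:free-monad} it has a left adjoint $F_\V$, and we let $\bbT_\V=(T_\V,\eta,\mu)$ be the induced monad (the free-algebra monad of $\V$). The translation sends $(\Sigma,\E)$ to $\bbT_\V$. That this translation \emph{preserves categories of models} is precisely the content of \autoref{T:monadic}: since $V$ is monadic, the comparison functor from $\V$ to the category of Eilenberg--Moore algebras of $\bbT_\V$ is an equivalence over $\BC_0$, so the models of $(\Sigma,\E)$ --- the objects of $\Alg(\Sigma,\E)=\V$ --- are, compatibly with the forgetful functors to $\BC_0$, precisely the $\bbT_\V$-algebras.

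It then remains to see that $\bbT_\V$ is enriched and $\kappa$-accessible. For $\kappa$-accessibility of $T_\V=V F_\V$: a left adjoint preserves all colimits, so $F_\V(\colim_i C_i)=\colim_i F_\V C_i$ for every $\kappa$-directed diagram $(C_i)$ in $\BC$, and $V$ preserves $\kappa$-directed colimits by \autoref{cor:variety-accessible}; hence $T_\V(\colim_i C_i)=\colim_i T_\V C_i$, as wanted.

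For enrichment, I would show that the adjunction $F_\V\dashv V$ lifts to an enriched adjunction over the closed monoidal category $\BC$. The category $\V$ is enriched over $\BC$, with hom-object $\V(A,B)$ the subobject of the internal hom $[A,B]$ consisting of the $\Sigma$-homomorphisms, and $V$ is an enriched functor since this inclusion is relation-preserving by construction. Moreover $\V$ has powers: for $\Gamma\in\BC$ and $A\in\V$, the internal hom $[\Gamma,A]$ carries the pointwise $\Sigma$-algebra structure (built from $\sigma_A$ via $[\arity(\sigma),[\Gamma,A]]\cong[\Gamma,[\arity(\sigma),A]]$), each projection $\pi_\gamma\colon[\Gamma,A]\to A$ is a $\Sigma$-homomorphism, and because term evaluation commutes with the $\pi_\gamma$ and the relations in $\E$ hold in $A$, they hold pointwise in $[\Gamma,A]$; thus $[\Gamma,A]\in\V$ is the $\Gamma$-power of $A$. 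Since $V$ visibly preserves these powers ($V[\Gamma,A]=[\Gamma,VA]$), an enriched right adjoint preserving powers induces an enriched left adjoint and an enriched adjunction, whence $\bbT_\V=V F_\V$ is an enriched monad. (Alternatively, one checks directly that $T_\V$ is enriched: the signature functor $H_\Sigma=\coprod_{\sigma\in\Sigma}[\arity(\sigma),-]$ is enriched, since each $[\Gamma,-]$ is by the pointwise description of its action and coproducts in $\BC$ act as disjoint unions on edges, and the free-$\Sigma$-algebra step and the reflection onto $\V$ producing $T_\V$ from $H_\Sigma$ may both be performed enriched-ly.) Finally, with $T_\V$ enriched, the $\CatV$-naturality of $\eta$ and $\mu$ is automatic, as it amounts to commutativity in $\BC$ of squares built from internal-hom morphisms and the enrichment structure of $T_\V$, and parallel $\BC$-morphisms coincide once their underlying functions do --- where $\CatV$-naturality degenerates to ordinary naturality of $\eta,\mu$.

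I do not expect a genuine obstacle: the substantial facts --- monadicity of $V$ and $\kappa$-accessibility of $V$ --- are already delivered by \autoref{T:monadic} and \autoref{cor:variety-accessible}, and the only item calling for real verification is the enrichment bookkeeping just outlined, specifically the closure of $\V$ under powers and their preservation by $V$ (equivalently, that the free-algebra and reflective constructions respect the $\BC$-enrichment).
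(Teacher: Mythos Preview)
Your treatment of model preservation (via \autoref{T:monadic}) and $\kappa$-accessibility (via \autoref{cor:variety-accessible}, noting that the left adjoint preserves all colimits) matches the paper's proof exactly. The divergence is in the enrichment argument. The paper proceeds concretely and elementarily: to show that $[X,Y]\models\alpha(f_i)$ implies $[T_\V X,T_\V Y]\models\alpha(T_\V f_i)$, it forms the substructure $E\hookrightarrow T_\V X$ consisting of those $t$ for which $T_\V Y\models\alpha(T_\V f_i(t))$, checks that $E$ contains $\eta_X[X]$ (using naturality of $\eta$ and that $\eta_Y$ is relation-preserving) and is closed under the $\Sigma$-operations (using that each $\sigma_{T_\V Y}$ is relation-preserving and each $T_\V f_i$ is a homomorphism), and concludes $E=T_\V X$ by freeness. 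Your route via powers and enriched adjunctions is more structural: it packages the same closure facts as the statement that $[\Gamma,A]\in\V$ whenever $A\in\V$ (which indeed follows from closure of $\V$ under products and subalgebras, since $[\Gamma,A]$ embeds as a subalgebra of $A^{|\Gamma|}$), and then invokes the standard lifting theorem for enriched adjunctions. Your approach situates the result in general enriched-categorical terms and transfers readily to other bases; the paper's approach avoids importing any enriched-adjoint machinery and stays within the concrete relational setup, at the cost of being tailored to this particular situation.
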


%
%
\subparagraph*{Relational Logic}\label{S:relational-logic}%
We proceed to set up a system of rules for deriving relations among
terms. The calculus will involve two forms of judgements, both
mentioning a context~$X\in\Str(\Rels)$ (not necessarily
$\kappa$-presentable). By a \emph{relational judgement}%
\begin{equation*}
  X\vdash \alpha(t_1,\dots,t_{\arity(\alpha)}),
\end{equation*}
where $t_1,\dots,t_{\arity(\alpha)}\in \TSigma(X)$, we indicate that
for every valuation of~$X$ that is \emph{admissible}, i.e.~satisfies
the relational constraints specified in~$X$, the terms $t_i$ are
defined, and the resulting tuple of values is in relation~$\alpha$. We
treat expressions $\alpha(t_1,\dots,t_{\arity(\alpha)})$ notationally
as edges over~$\TSigma(X)$, in particular sometimes write them in the
form $\alpha(f)$ for $f\colon\arity(\alpha)\to\TSigma(X)$. Moreover,
a \emph{definedness judgement} of the form
\begin{equation*}
  X\vdash\isdef t
\end{equation*}
states that~$t$ is defined for all admissible valuations of~$X$. (We
could encode $\isdef t$ as $\phi(t,t)$ for any $\phi\in Eq(x,y)$ but
for technical reasons we prefer to keep definedness judgement distinct
from relational judgements.) 

The rules of the arising system of \emph{relational 
logic} are shown below:
%
\begin{gather*}
   (\mathsf{Var})\; \frac{}{X\vdash\isdef x \quad}~(x\in X) 
   \qquad
  (\mathsf{Ctx})\; 
  \frac{}{X\vdash\alpha(x_1,\dots,x_{\arity(\alpha)})}~
  (X\models\alpha(x_1,\dots,x_{\arity(\alpha)}))
  \\[3mm]
  (\mathsf{Mor}) \; 
  \frac{
  	\{X\vdash\alpha(f_i(j))\mid
        j\in\arity(\sigma)\}\cup\{X\vdash\isdef\sigma(f_i)\mid i\in\arity(\alpha)\}
        }{X\vdash\alpha(\sigma(f_i))}~((f_i\colon\arity(\sigma)\to\TSigma(X))_{i\in\arity(\alpha)})
  \\[3mm]
(\EAr)
  \;  
  \frac{
      \{X\vdash\alpha(f\cdot g)\mid 
      \alpha(g)\in\arity(\sigma)\}\,
      \cup\,\{X\vdash\isdef f(i)\mid i\in\arity(\sigma)\}
    }
  {X\vdash\isdef\sigma(f)}~ (f\colon\arity(\sigma)\to\TSigma(X))
  \\[3mm]
  (\IAr) \;  \frac{\{X\vdash\alpha(\tau\cdot f)\mid \alpha(f)\in\Delta\}\,
  \cup\,\{X\vdash\isdef \tau(y)\mid y\in\Delta\}}								
  {X\vdash\beta(c)}~(+) \\
  %
  (\mathsf{RelAx}) \;
  \frac{\{X\vdash\tau\cdot\varphi\mid \varphi\in\Phi\}\,
    \cup\,\{X\vdash\isdef\tau(f(i))\mid i\in\arity(\alpha)\}}
  {X\vdash\alpha(\tau\cdot f)}\;
  \begin{array}{l@{}l}
    (&\Phi\rimpl\alpha(f)\in\RelAx,\\
    &\tau\colon\Vars\to\TSigma(X))
  \end{array}
  \\[3mm]
  (\mathsf{Ax})\;
   \frac{\{X\vdash\alpha(\tau\cdot f)\mid\alpha(f)\in\Delta\}\,
   \cup\,\{X\vdash\isdef\tau(y)\mid y\in\Delta\}}{X\vdash\beta(\tau\cdot g)}~(\Delta\vdash\beta(g)\in\E)
\end{gather*}

\noindent
Recall that both the arities of operations
in~$\Sigma$ and the contexts of the $\kappa$-ary $\Sigma$-relations in
$\E$ are in~$\pres_{\kappa}$. We assume such a $\Delta\in\pres_\kappa$
to be specified as $\Delta=\Refl(Y,E)$ by a $\kappa$-presentable object
$(Y,E)\in\Str(\Pi)$ (cf.~\autoref{prop:fp-rels},
\autoref{prop:pres-object}, \autoref{lem:compact-generated},
\autoref{P:fpchar}); by writing $\phi\in\Delta$ for an edge~$\phi$, we
indicate that $\phi\in E$ (rather than just $\phi\in\Edge(\Delta)$).
The rules $(\EAr)$ and $(\IAr)$ apply to every $\sigma\in\Sigma$, and
rule $(\mathsf{Mor})$ applies to every $\sigma\in\Sigma$ and every
$\alpha\in\Rels$. The side condition $(+)$ of $(\IAr)$ is the
following: for some axiom $\Delta\vdash\gamma(g)$ of $\V$ there is
$\sigma(h)\in\sub(g)$, where
$h\colon\arity(\sigma)\to\TSigma(\Delta)$, such that
$\arity(\sigma)\models\beta(k)$ and
\[
  c = \arity(\beta)
  \xra{k}
  \arity(\sigma)
  \xra{h}
  \TSigma(\Delta)
  \xra{\tau}
  \TSigma(X).
\]
Rule $(\mathsf{Mor})$ captures the fact that operations~$\sigma$ are
interpreted as morphisms of type $[\arity(\sigma),A]\to A$, a
condition that relates to enrichment of the induced monad. Rule
$(\EAr)$ states that operations are defined when all the constraints
given by their arity are satisfied. Rules $(\mathsf{RelAx})$ and
$(\mathsf{Ax})$ allow application of the axioms of the Horn theory and
the variety, respectively, in both cases instantiated with a
substitution. A general substitution rule is not included but
admissible. Rule $(\IAr)$ captures that every axiom of the variety is
understood as implying that (under the constraints of the context) all
subterms occurring in it are defined, in the sense that the
constraints in the arities of the relevant operations hold.

\begin{rem}\label{R:relational-logic}
  Instantiating the above system of rules 
  to the theory of partial orders yields essentially the ungraded
  version of our previous deduction system for graded monads
  on~$\Pos$~\cite{FordEA21}, up to the above-mentioned coding of
  definedness judgements. At first glance, the instantiation to the theory 
  of metric spaces appears to yield a system that differs in several 
  respects from the existing system of quantitative algebra~\cite{MPP16}; 
  besides the mentioned absence of a general substitution rule, this 
  concerns most prominently the absence of a cut rule (included 
  in~\cite{MPP16}) in our system. These distinctions are only superficial: 
  as mentioned above, the more general substitution rule is
  admissible in our system, and it follows from completeness
  (\autoref{P:complete}) that the cut rule is admissible as well.
\end{rem}
\begin{lem}\label{L:admissible}
  The following rules are admissible:
  \[
    (\mathsf{Arity})
    \;
    \displaystyle
    \frac{X\vdash\isdef{\sigma(m)}}{X\vdash\alpha(m\cdot f)}
    ~~
    \begin{array}{@{}l@{}l}
      (& \arity(\sigma)\models\alpha(f), \\
      & m\colon |\arity(\sigma)| \to T_\Sigma(X))
    \end{array}
    \qquad
    (\mathsf{Subterm})
    \;
    \displaystyle
    \frac{X\vdash\alpha(f)}{X\vdash\isdef{u}}
    ~
    (u\in\sub(f))
  \]
 \end{lem}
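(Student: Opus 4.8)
The plan is to prove the two rules independently, reducing each to the basic rules $(\EAr)$, $(\IAr)$ and $(\mathsf{RelAx})$ by means of an induction together with one auxiliary lemma. For $(\mathsf{Subterm})$ I would first strengthen the statement so that it survives an induction on derivations: prove simultaneously that (a) if $X\vdash\alpha(f)$ is derivable, then $X\vdash\isdef u$ is derivable for every $u\in\sub(f)$, and (b) if $X\vdash\isdef t$ is derivable, then $X\vdash\isdef u$ is derivable for every $u\in\sub(t)$. The rules $(\mathsf{Var})$ and $(\mathsf{Ctx})$ are trivial, as the subterms involved are variables. The rules $(\mathsf{Mor})$, $(\EAr)$ and $(\mathsf{RelAx})$ are immediate from the inductive hypothesis: each of them carries, among its premisses, definedness judgements --- $X\vdash\isdef\sigma(f_i)$ in $(\mathsf{Mor})$, $X\vdash\isdef f(i)$ in $(\EAr)$, $X\vdash\isdef\tau(f(i))$ in $(\mathsf{RelAx})$ --- to which (b) applies, covering all subterms of the conclusion (together with the conclusion itself in the case of $(\EAr)$).

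The substantive cases are $(\mathsf{Ax})$ and $(\IAr)$, whose conclusions mention a substitution instance $\tau\cdot g$ of a term $g$ occurring in an axiom $\Delta\vdash\gamma(g)$ of the variety. For these I would isolate the following auxiliary lemma: for every such axiom, every $t\in\sub(g)$ and every substitution $\tau\colon|\Delta|\to\TSigma(X)$, if $X\vdash\alpha(\tau\cdot f)$ is derivable for all edges $\alpha(f)$ of the chosen presentation of $\Delta$, and $X\vdash\isdef\tau(y)$ is derivable for all $y\in|\Delta|$, then $X\vdash\isdef\tau(t)$ is derivable. I would prove this by induction on the structure of $t$: the variable case is one of the hypotheses; for $t=\rho(e)$ one applies $(\EAr)$ to the map $\tau\cdot e$, discharging the definedness premisses $X\vdash\isdef\tau(e(i))$ by the inductive hypothesis and the relational premisses $X\vdash\alpha'(\tau\cdot e\cdot g')$ (for $\alpha'(g')$ an edge of $\arity(\rho)$) by a single instance of $(\IAr)$, whose side condition $(+)$ holds with $\sigma:=\rho$, $h:=e$ (using $\rho(e)=t\in\sub(g)$), $\beta:=\alpha'$, $k:=g'$ and the same $\tau$, and whose premisses are precisely the hypotheses of the lemma. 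Granting the lemma, the $(\mathsf{Ax})$ and $(\IAr)$ cases follow from the standard description of the subterms of a substitution instance: every subterm of $\tau\cdot g$ is either $\tau$ applied to a compound subterm of $g$ --- handled by the auxiliary lemma --- or a subterm of some $\tau(y)$ with $y$ a variable of $g$ --- handled by (b) applied to the premiss $X\vdash\isdef\tau(y)$.

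For $(\mathsf{Arity})$ I would argue by inversion. Since $\sigma(m)$ is a compound term, any derivation of $X\vdash\isdef\sigma(m)$ must end with $(\EAr)$ applied to $\sigma$ and the map $m$; hence $X\vdash\alpha'(m\cdot g')$ is derivable for every edge $\alpha'(g')$ of the presentation of $\arity(\sigma)$, and $X\vdash\isdef m(i)$ for every $i\in|\arity(\sigma)|$. If $\alpha(f)$ already belongs to that presentation, the conclusion is one of these premisses; in general $\arity(\sigma)\models\alpha(f)$ only asserts that $\alpha(f)$ is $\Horn$-entailed by the presentation's edges, so I would prove the auxiliary: if a set $E$ of $\Rels$-edges in a set $Y$ entails an edge $e$ under $\Horn$, and $\bar m\colon Y\to\TSigma(X)$ satisfies $X\vdash\alpha'(\bar m\cdot g')$ for all $\alpha'(g')\in E$ and $X\vdash\isdef\bar m(y)$ for all $y\in Y$, then $X\vdash\bar m\cdot e$ is derivable. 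This goes by induction on the $\Horn$-derivation of $e$ from $E$: the base case $e\in E$ is a hypothesis, and a step applying a Horn axiom $\Phi\rimpl\psi$ along a valuation $\kappa\colon\Vars\to Y$ is simulated by $(\mathsf{RelAx})$ with substitution $\bar m\cdot\kappa$, its relational premisses supplied by the inductive hypothesis and its definedness premisses $X\vdash\isdef\bar m(\kappa(\ldots))$ by the assumed definedness of $\bar m$ on $Y$. Instantiating with $(Y,E)$ the presentation of $\arity(\sigma)$, $\bar m$ the composite of $m$ with the reflection map $Y\to|\arity(\sigma)|$, and $e$ the edge of $Y$ lying over $\alpha(f)$, yields $(\mathsf{Arity})$.

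The hard part will be the bookkeeping for the auxiliary equality predicate in this last lemma: $\Horn$-derivations over $Y$ run in the extended theory that additionally contains the axioms governing $=$ (reflexivity, symmetry, transitivity, and closure of all relations under $=$), whereas $=$ is not primitive in relational logic. The remedy is to work throughout with the $\Eq$-presentation of equality guaranteed by \autoref{assn:equality} --- reading an equality edge $(t,t')$ as the family $\Eq(t,t')$ of $\Rels$-edges --- and to use that closure of the relations under $\Eq$ is an explicit axiom of $\RelAx$, hence directly available to $(\mathsf{RelAx})$, while reflexivity, symmetry and transitivity of $\Eq$ are $\Horn$-consequences of those congruence axioms and so can be replayed inside the calculus. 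This is conceptually routine but must be carried out with care; it is slightly streamlined if one additionally puts the (derivable) axioms $\rimpl\psi$, $\psi\in\Eq(x,x)$, into \autoref{assn:equality}.
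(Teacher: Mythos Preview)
Your argument is correct and substantially more careful than the paper's. For $(\mathsf{Arity})$, the paper simply inverts on $(\EAr)$ and declares the premisses to be ``the desired relational judgements''; this elides the gap you identify, namely that $(\EAr)$'s premisses range only over edges in the chosen \emph{presentation} of $\arity(\sigma)$, whereas the side condition $\arity(\sigma)\models\alpha(f)$ ranges over all edges of the reflected structure --- your auxiliary lemma simulating $\Horn$-derivations via $(\mathsf{RelAx})$ (with the $\Eq$-bookkeeping for equality) is exactly what closes it. For $(\mathsf{Subterm})$, the paper performs a two-line case split on~$u$: for compound $u=\sigma(m)$ it invokes $(\mathsf{Arity})$ to obtain the relational premisses of $(\EAr)$ and then concludes $X\vdash\isdef\sigma(m)$ --- but $(\mathsf{Arity})$ itself has $X\vdash\isdef\sigma(m)$ as its premise, so as written this step is circular. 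Your simultaneous induction on derivations, together with the auxiliary lemma that the hypotheses of $(\mathsf{Ax})$ and $(\IAr)$ already yield $X\vdash\isdef\tau(t)$ for every $t\in\sub(g)$ (the key observation being that $(\IAr)$ supplies precisely the relational premisses needed to fire $(\EAr)$ at each compound subterm of~$g$), is the natural way to make the argument watertight; it also decouples the two rules, whereas the paper's sketch has $(\mathsf{Subterm})$ depend on $(\mathsf{Arity})$.
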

\subparagraph*{Constructing free algebras}
We now show that
relational logic gives rise to a syntactic construction of free
algebras in the variety $\V$.

The set $\Term_{\V}(X)$ of \emph{derivably $\V$-defined terms
in $X$} consists of those terms $t\in\TSigma(X)$ such that 
$X\vdash\isdef t$ is derivable. We equip $\Term_{\V}(X)$ with the
relations 
\[
\Term_{\V}(X)\models\alpha(f)\Longleftrightarrow X\vdash\alpha(f)\text{ is derivable}
\qquad (\alpha\in\Rels, f\colon\arity(\alpha)\to\Term_{\V}(X))
\]
making it into a $\Rels$-structure. We write $\sim$ for the relation
on $\Term_{\V}(X)$ given by \emph{derivable equality}: that is, for
all $s,t\in\Term_{\V}(\Gamma)$ we put $s\sim t$ iff $X\vdash\varphi$
is derivable for all $\varphi\in\Eq(s, t)$, which is clearly an
equivalence relation. The $\sim$-equivalence class of
$t\in\Term_{\V}(X)$ is denoted by $[t]$. We pick a \emph{splitting}
$u\colon\Term_{\V}(X)/{\sim}\to\Term_{\V}(X)$ of the canonical
quotient map $q\colon\Term_{\V}(X)\to\Term_{\V}(X)/{\sim}$,
i.e.~$q\cdot u = \id$, so~$u$ picks representatives of $\sim$-equivalence
classes. Then $\Term_{\V}(X)/{\sim}$ carries the structure of a
$\BC$-object, with edges defined by
$\Term_{\V}(X)/{\sim}\models\alpha(f)$ iff
$\Term_{\V}(X)\models\alpha(u\cdot f)$.
(`Only if' means that $u$ is relation preserving.)

\begin{defn}
  The \emph{algebra $\F X$ of defined terms in $X$} is the
  $\Sigma$-algebra obtained by equipping $\Term_{\V}(X)/{\sim}$ with
  the operations
  $\sigma_{\F X}\colon[\arity(\sigma),
  \Term_{\V}(X)/{\sim}]\to\Term_{\V}(X)/{\sim}$ well-defined by
  $f\mapsto [\sigma(u\cdot f)]$, where
  $u\colon\F\Gamma\to\Term_{\V}(X)$ is the chosen splitting of
  $q\colon\Term_{\V}(X)\to\F X$.
\end{defn}

\begin{thm}\label{thm:free-alg}
For every $X\in\BC$, $\F X$ is a free algebra in $\V$.
\end{thm}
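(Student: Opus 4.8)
The plan is to show that $\F X$, together with the map $\eta_X\colon X\to\F X$ given by $x\mapsto[x]$, is a universal arrow from $X$ to the forgetful functor $V\colon\V\to\BC$; equivalently, that $\eta_X$ is a $\BC$-morphism, that $\F X$ lies in $\V$, and that every $\BC$-morphism $g\colon X\to A$ with $A\in\V$ factors as $g=g^\sharp\cdot\eta_X$ for a unique $\V$-homomorphism $g^\sharp\colon\F X\to A$. That $\eta_X$ is a $\BC$-morphism is immediate from the rules $(\mathsf{Var})$ and $(\mathsf{Ctx})$. The substance is split between verifying $\F X\in\V$ and proving soundness of relational logic over $\V$-algebras; once soundness is in hand, the universal property follows by a term induction.

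For $\F X\in\V$ the central tool is a \emph{substitution lemma}: for a $\BC$-morphism $e\colon\Delta\to\F X$ and the induced substitution $\tau=u\circ e\colon|\Delta|\to\Term_\V(X)\subseteq\TSigma(X)$ (with $u$ the chosen splitting), one has $e^\#(t)=[\tau(t)]$ for every $t\in\TSigma(\Delta)$ with $\tau(t)\in\Term_\V(X)$, proved by induction on $t$ using $q\cdot u=\id$ in the base case and the congruence rules available via $(\mathsf{RelAx})$ and \autoref{assn:equality} in the induction step. With this one checks, in order: that the edge relation on $\Term_\V(X)/{\sim}$ is well defined on $\sim$-classes and closed under all axioms of $\Horn$ (instances of $(\mathsf{RelAx})$), so that $\F X$ is genuinely an object of $\BC$; that each operation $\sigma_{\F X}$ is well defined (definedness via $(\EAr)$, independence of representatives via $(\mathsf{Mor})$ in its $\Eq$-instance) and is a $\BC$-morphism (rule $(\mathsf{Mor})$); and that $\F X$ satisfies every axiom $\Delta\vdash\beta(g)\in\E$. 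For this last point, given a relation-preserving $e\colon\Delta\to\F X$ we feed $\tau=u\circ e$ into rules $(\mathsf{Ax})$ and $(\IAr)$: the relational premises are derivable because $e$ preserves the generating edges of the $\kappa$-presentable object $\Delta$ (which we present as $\Delta=\Refl(|\Delta|,E_\Delta)$, cf.\ \autoref{prop:pres-object}), and the definedness premises because $\tau$ lands in $\Term_\V(X)$; we obtain $X\vdash\beta(\tau\cdot g)$, and then $(\mathsf{Subterm})$ from \autoref{L:admissible} yields definedness of every subterm of $\tau\cdot g$, so that by the substitution lemma $\F X\models\beta(e^\#\cdot g)$ with all the relevant evaluations defined. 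I expect this verification — in particular keeping straight the distinction between the chosen generating edges of $\kappa$-presentable arities and contexts and the full edge relations they entail — to be the main obstacle.

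For the universal property I would first prove \emph{soundness}: for every $A\in\V$ and relation-preserving $e\colon X\to A$, derivability of $X\vdash\isdef t$ implies $e^\#(t)$ is defined; derivability of $X\vdash\alpha(f)$ implies each $e^\#(f(i))$ is defined and $A\models\alpha(e^\#\cdot f)$; and derivability of $X\vdash\varphi$ for all $\varphi\in\Eq(s,t)$ implies $e^\#(s)=e^\#(t)$. This is a simultaneous induction over derivations with one case per rule: $(\mathsf{Var})$ and $(\mathsf{Ctx})$ are trivial, $(\EAr)$ and $(\mathsf{Mor})$ use \autoref{D:evaluation} and that the $\sigma_A$ are $\BC$-morphisms, $(\mathsf{RelAx})$ uses only $A\in\BC$, and $(\IAr)$ and $(\mathsf{Ax})$ use $A\in\V$ (the equality clause being subsumed since $\Horn$ expresses equality). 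Given soundness, for $A\in\V$ and $g\colon X\to A$ we define $g^\sharp\colon\F X\to A$ by sending $[t]$ to the value of $t$ under the evaluation map of \autoref{D:evaluation} for the assignment $g$; soundness makes this well defined (totality on $\Term_\V(X)$ and independence of the $\sim$-representative) and relation-preserving, it is a $\Sigma$-homomorphism directly from the definitions, and $g^\sharp\cdot\eta_X=g$ is clear. Uniqueness follows by induction over the (well-founded) structure of a term $t\in\Term_\V(X)$: any $\V$-homomorphism $k$ with $k\cdot\eta_X=g$ agrees with $g^\sharp$ on variables, and for $t=\sigma(f)$ one uses that a derivation of $X\vdash\isdef\sigma(f)$ must end with $(\EAr)$, so each $f(i)$ lies in $\Term_\V(X)$ and $\sigma_{\F X}$ applied to $([f(i)])_i$ returns $[\sigma(f)]$, reducing the claim to the induction hypothesis through the homomorphism equation.
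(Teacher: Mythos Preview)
Your proposal is correct and follows essentially the same approach as the paper. Your ``substitution lemma'' $e^\#(t)=[\tau(t)]$ for $\tau=u\cdot e$ is exactly the paper's key equation~\eqref{eq:qtau}, and the overall structure---deriving $X\vdash\beta(\tau\cdot g)$ via $(\mathsf{Ax})$, using $(\mathsf{Subterm})$ for definedness of subterms, then invoking soundness to define the mediating homomorphism $g^\sharp([t])=g^\#(t)$ and proving uniqueness by term induction---matches the paper's argument closely (the paper packages some of your preliminary checks into \autoref{L:ind} and \autoref{C:homo}, and your mention of $(\IAr)$ is superfluous since $(\mathsf{Ax})$ already suffices).
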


\begin{thm}[Soundness and Completeness]\label{P:complete}
$X\vdash\alpha(f)$ is derivable iff every $A\in\V$ satisfies 
$X\vdash\alpha(f).$
\end{thm}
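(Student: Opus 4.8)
The plan is to prove soundness and completeness separately, with soundness being routine and completeness being the substantial direction; the key tool for completeness will be the free algebra~$\F X$ constructed in \autoref{thm:free-alg}.

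For \emph{soundness}, I would argue by induction on the structure of derivations. Fix an arbitrary $A\in\V$ and an arbitrary relation-preserving assignment $e\colon X\to A$; I must show that every derivable judgement is validated under~$e$, i.e.~that for $X\vdash\isdef t$ the evaluation $e^{\#}(t)$ is defined, and for $X\vdash\alpha(f)$ additionally $\alpha_A(e^{\#}\cdot f)$ holds. Each rule corresponds directly to a closure property of the pair (defined terms, relations holding between them) in~$A$: $(\mathsf{Var})$ and $(\mathsf{Ctx})$ hold because~$e$ is relation-preserving; $(\mathsf{Mor})$ holds because $\sigma_A\colon[\arity(\sigma),A]\to A$ is a $\BC$-morphism, hence relation-preserving on the internal hom, which is exactly the enrichment condition; $(\EAr)$ is the definedness clause of \autoref{D:evaluation}; $(\mathsf{RelAx})$ holds because $A$ lies in $\BC=\Str(\Horn)$ and hence satisfies the Horn axioms under the valuation $x\mapsto e^{\#}(\tau(x))$; $(\mathsf{Ax})$ holds because $A\in\V$ satisfies every $\Sigma$-relation in~$\E$; and $(\IAr)$ holds because satisfaction of an axiom $\Delta\vdash\gamma(g)$ by~$A$ forces, via \autoref{D:relational-theory} and \autoref{D:evaluation}, that all subterms $\sigma(h)$ occurring in~$g$ are evaluated and hence that the constraints $\arity(\sigma)\models\beta(k)$ in their arities are reflected in~$A$. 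This is bookkeeping, not difficulty.

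For \emph{completeness}, suppose $X\vdash\alpha(f)$ is \emph{not} derivable; I must produce some $A\in\V$ and a relation-preserving $e\colon X\to A$ witnessing failure of satisfaction. The witness is the free algebra $\F X$ of \autoref{thm:free-alg} together with the canonical assignment $e_X\colon X\to\F X$ sending a variable~$x$ to the class $[x]$ (this is relation-preserving by the rule $(\mathsf{Ctx})$, read through the definition of the edges of $\Term_\V(X)$). The crucial technical claim is a \emph{truth lemma}: for the chosen splitting~$u$, and for every term $t\in\TSigma(X)$,
\begin{equation*}
  e_X^{\#}(t)\text{ is defined}\iff X\vdash\isdef t\text{ is derivable, and then }e_X^{\#}(t)=[t];
\end{equation*}
and consequently, for an edge $\alpha(f)$ over $\TSigma(X)$ with all components derivably defined, $\alpha_{\F X}(e_X^{\#}\cdot f)$ holds iff $X\vdash\alpha(f)$ is derivable. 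This is proved by induction on the structure of~$t$, using the admissible rules $(\mathsf{Arity})$ and $(\mathsf{Subterm})$ from \autoref{L:admissible} to match the inductive definition of $e^{\#}$ against the definedness rules $(\EAr)$, and using that $\sigma_{\F X}(h)=[\sigma(u\cdot h)]$ together with derivable equality being a congruence (which itself follows from \autoref{assn:equality}, since $\RelAx$ includes the closure-under-$\Eq$ axioms, invoked via $(\mathsf{RelAx})$). Granting the truth lemma, non-derivability of $X\vdash\alpha(f)$ means either some $f(i)$ is not derivably defined — so $e_X^{\#}\cdot f(i)$ is undefined and $\F X$ fails $X\vdash\alpha(f)$ vacuously on definedness — or all are defined but $\neg(\Term_\V(X)\models\alpha(u\cdot f))$, so $\neg\,\alpha_{\F X}(e_X^{\#}\cdot f)$; in either case $\F X\in\V$ does not satisfy $X\vdash\alpha(f)$, as required.

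The main obstacle is the truth lemma, and within it the interaction between the arbitrary splitting~$u$ (needed to make $\Term_\V(X)/{\sim}$ into an honest $\BC$-object) and derivable equality: one must check that evaluation is independent of the choice of representatives, i.e.~that $s\sim t$ implies $e_X^{\#}(s)=e_X^{\#}(t)$, which requires knowing that $\sim$ is a congruence with respect to all operation symbols and is respected by all the relations $\alpha\in\Rels$ — precisely the content carried by the explicitly-included $\Eq$-closure axioms in \autoref{assn:equality}, surfaced through rule $(\mathsf{RelAx})$. A secondary subtlety is that contexts~$X$ in judgements need not be $\kappa$-presentable while axioms are instantiated only at $\kappa$-presentable~$\Delta$; this is handled by the fact that every relation-preserving $e\colon X\to A$ restricts along substitutions $\tau\colon\Vars\to\TSigma(X)$ and along morphisms out of $\kappa$-presentable objects, so the rules $(\mathsf{Ax})$, $(\mathsf{RelAx})$, $(\IAr)$ transfer correctly — essentially this is already baked into having proved $\F X$ free in \autoref{thm:free-alg}, so completeness can alternatively be read off directly: $X\vdash\alpha(f)$ is derivable iff it holds in $\F X$ (truth lemma) iff it holds in every $A\in\V$ (one direction by soundness, the other because $\F X\in\V$).
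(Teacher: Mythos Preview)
Your proposal is correct and follows essentially the same route as the paper: soundness by simultaneous induction on derivations, and completeness via the free algebra~$\F X$ serving as the canonical countermodel, reduced to a truth lemma equating derivability with satisfaction in~$\F X$ under the assignment $x\mapsto[x]$. The paper frames completeness in the direct rather than contrapositive direction, and isolates as a separate Substitution Lemma the fact that $e^\#(\bar\tau(t))=(e^\#\cdot\tau)^\#(t)$ (used for the cases $(\mathsf{Ax})$, $(\mathsf{RelAx})$, $(\IAr)$ in soundness), but these are presentational differences only.
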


\begin{rem}
  Consequently, our system instantiated to the theory of metric spaces
  and the system of quantitative algebra~\cite{MPP16}, which is also
  sound and complete, are deductively equivalent. Hence, our results
  thus far imply that every quantitative algebraic theory induces an
  $\omega_1$-accessible monad.  Indeed this remains true if one admits
  operations of countable arity, as in our theory of complete metric
  spaces (\autoref{expl:cmet}). Due to non-discrete contexts in
  axioms, monads induced by quantitative algebraic theories (such as
  $x=_{1/2} y\vdash x=_0y$) in general fail to be finitary. However, our
  results do imply that the induced monad is finitary if only
  discrete contexts are used; e.g.~this holds for the theories of
  left-invariant barycentric algebras and of quantitative
  semi-lattices, respectively~\cite{MPP16} (note for the latter that
  axiom (S4) can be omitted in \cite[Def.~9.1]{MPP16}). We conjecture
  that monads induced by \emph{continuous equation
    schemes}~\cite{MPP16} are also finitary.
\end{rem}
\section{Enriched Accessible Monads}\label{S:monad-theory}

\noindent We proceed to establish the monad-to-theory direction of our
correspondence; as already indicated, given our fixed $\lambda$-ary
Horn theory~$\Horn$, this works only for $\lambda$-accessible monads
and $\lambda$-ary theories, but not for accessibility degrees
$\kappa<\lambda$ as in the theory-to-monad direction. So let
$\bbT=(T, \eta, \mu)$ be an enriched $\lambda$-accessible monad on
$\BC$. We proceed to extract a $\lambda$-ary relational algebraic
theory from~$\bbT$. We first review the equivalence between monads and
\emph{Kleisli triples}~(see, e.g.,~Moggi~\cite{Moggi91}, and
originally Manes~\cite[Exercise
12]{Manes76}). 

\begin{defn}
  A \emph{Kleisli triple} in $\BC_0$ is a triple $(T, \eta, (-)^*)$
  consisting of a mapping $T\colon\BC_0\to\BC_0$ (of objects), a
  morphism $\eta_X\colon X\to TX$ for all $X\in\BC_0$, and an
  assignment of a morphism $f^*\colon TX\to TY$ to every morphism
  $f\colon X\to TY$. This data is subject to the laws below for all
  $X\in\BC_0$ and all morphisms $f\colon X\to TY$ and
  $g\colon Y\to TZ$:
  \begin{equation}\label{eq:kleisli-laws}
    \eta_X^* = \id_X,
    \qquad
    f^*\cdot\eta_X = f,
    \qquad\text{and}\qquad
    g^*\cdot f^* = (g^*\cdot f)^*.
  \end{equation}
\end{defn}

\begin{rem}\label{R:kleisli-mnd}
  The mapping which assigns to each monad $(T, \eta, \mu)$ the Kleisli
  triple $(T, \eta, (-)^*)$ with $(-)^*$ defined by
  \( f^* = TX\xrightarrow{Tf} TTY\xrightarrow{\mu_Y} TY \) for
  $f\in\BC_0(X, TY)$ yields a bijective correspondence between monads
  and Kleisli triples on $\BC_0$.
\end{rem}

\begin{notn}\label{N:operation}
  For each operation $\sigma$ in a signature $\Sigma$, we have a term
  $\sigma(u_{\arity(\sigma)})$, where $u_{\arity(\sigma)}$ is the
  inclusion $\arity(\sigma)\hookrightarrow \TSigma(\arity(\sigma))$.
  By abuse of notation, we also write $\sigma$ for $\sigma(u_{\arity(\sigma)})$.
\end{notn}

\begin{defn}\label{D:inducedtheory}
  The $\lambda$-ary signature $\Sigma_{\bbT}$ \emph{induced by~$\bbT$}
  is the disjoint union of the
  sets~$|T\Gamma|$~($\Gamma\in\pres_{\lambda}$), where elements of
  $|T\Gamma|$ have arity $\Gamma$.  The \emph{variety $\V_{\bbT}$
    induced by~$\bbT$} is $\V_{\bbT}=\Alg(\Sigma_{\bbT},\E_\bbT)$
  where~$\E_\bbT$ contains all axioms of the following shape, with
  $\Gamma\in\pres_{\lambda}$:
  \begin{enumerate}
  \item\label{D:inducedtheory-axiom-1}
    $\Gamma\vdash\alpha(\sigma_1,\dots, \sigma_{\arity(\alpha)})$ for
    all $\sigma_i\in T\Gamma$ such that
    $T\Gamma\models\alpha(\sigma_1,\dots, \sigma_{\arity(\alpha)})$
    
  \item\label{D:inducedtheory-axiom-2}
    $\Gamma\vdash f^*(\sigma)=\sigma(f)$ for all $\Delta\in\pres_{\lambda}$,
    morphisms $f\colon\Delta\to T\Gamma$,
    and $\sigma\in |T\Delta|$.

  \item\label{D:inducedtheory-axiom-3}
    $\Gamma \vdash \eta_\Gamma(x) = x$ for every $x \in \Gamma$.
  \end{enumerate}
\end{defn}
\noindent
Note that in the second item above, for every $x \in \Delta$ the
operation symbol $f(x) \in |T\Gamma|$ is considered as a term
according to \autoref{N:operation}. Hence $\sigma(f)$ is a term, too.

We now show that $\bbT$ is the free-algebra monad of its induced
variety $\V_{\bbT}$. For each $X\in\BC$, the $\BC$-object $TX$ carries
a canonical $\Sigma$-algebra structure with each operation
$\sigma_{TX}$ being defined by $\sigma_{TX}(f) := f^*(\sigma).$ We
call $TX$ the \emph{canonical algebra over $X$}.
\begin{lem}\label{L:TX-in-VT}
  Every canonical algebra lies in $\V_{\bbT}$.
\end{lem}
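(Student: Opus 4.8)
The plan is to verify directly that for each $X\in\BC$ the canonical algebra $TX$ satisfies every axiom in $\E_\bbT$; since $\V_\bbT=\Alg(\Sigma_\bbT,\E_\bbT)$, this is exactly what it means for $TX$ to lie in $\V_\bbT$. There are three families of axioms, and I would dispatch them one at a time. In all three cases the key is to evaluate terms over $TX$ under an arbitrary relation-preserving assignment $e\colon\Gamma\to TX$, and observe — using \autoref{D:evaluation} together with the definition $\sigma_{TX}(f)=f^*(\sigma)$ — that the partial evaluation map $e^\#$ applied to a term of the shape $\sigma(f)$ (with $\sigma\in|T\Delta|$, $f\colon\Delta\to\TSigma(\Gamma)$ whose components are again operation-symbols-as-terms) reduces, by a straightforward induction on term structure, to applying a suitable Kleisli extension to $\sigma$.

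\emph{Axioms of type~\ref{D:inducedtheory-axiom-1}.} Here we must show that whenever $T\Gamma\models\alpha(\sigma_1,\dots,\sigma_{\arity(\alpha)})$, then for every relation-preserving $e\colon\Gamma\to TX$ we have $e^\#\cdot\sigma_i$ defined for all $i$ and $\alpha_{TX}(e^\#\cdot(\sigma_i))$. Since $e\colon\Gamma\to TX$ is a morphism in $\BC$ and $\bbT$ is enriched, the Kleisli extension $e^*\colon T\Gamma\to TX$ is a $\BC$-morphism (enrichment is not even needed here — $e^*$ is a morphism by the monad laws), hence relation-preserving; and one checks that $e^\#$ evaluated on the operation-term $\sigma_i$ equals $e^*(\sigma_i)$. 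Definedness of $e^\#\cdot\sigma_i$ holds because $\sigma_i$, viewed as a term, is just $\sigma_i(u_\Gamma)$ with $u_\Gamma$ the inclusion, and the edges of $\Gamma$ are by construction implied by those of $T\Gamma$ only up to reflection — but the relevant check is that the arity constraints of $\sigma_i$ are the edges of $\Gamma$ itself, which $e$ preserves into $TX$; so definedness follows from $e$ being relation-preserving. Then $\alpha_{TX}(e^*(\sigma_1),\dots,e^*(\sigma_{\arity(\alpha)}))$ follows from $\alpha_{T\Gamma}(\sigma_1,\dots)$ by $e^*$ being relation-preserving.

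\emph{Axioms of type~\ref{D:inducedtheory-axiom-2} and~\ref{D:inducedtheory-axiom-3}.} For~\ref{D:inducedtheory-axiom-3}, the identity $\eta_\Gamma(x)=x$ over $\Gamma$ holds in $TX$ because $e^\#$ applied to the operation symbol $\eta_\Gamma(x)\in|T\Gamma|$ is $e^*(\eta_\Gamma(x))=e^*(\eta_\Gamma(x))$, and the first Kleisli law $e^*\cdot\eta_\Gamma=e$ gives $e^*(\eta_\Gamma(x))=e(x)=e^\#(x)$; definedness is immediate. For~\ref{D:inducedtheory-axiom-2}, the axiom reads $\Gamma\vdash f^*(\sigma)=\sigma(f)$ for $f\colon\Delta\to T\Gamma$ and $\sigma\in|T\Delta|$; under $e\colon\Gamma\to TX$ the left-hand side evaluates to $e^*(f^*(\sigma))=(e^*\cdot f^*)(\sigma)=(e^*\cdot f)^*(\sigma)$ by the composition law in \eqref{eq:kleisli-laws}, while the right-hand side $\sigma(f)$ evaluates, by the definition of the canonical algebra and an unwinding of $e^\#$ on $\sigma(f)$, to $\sigma_{TX}(e^\#\cdot f)=(e^\#\cdot f)^*(\sigma)$; and $e^\#\cdot f=e^*\cdot f$ since each component $f(y)\in|T\Gamma|$ evaluates under $e^\#$ to $e^*(f(y))$. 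So both sides agree. I expect the main obstacle to be purely bookkeeping: pinning down precisely how $e^\#$ acts on terms built from operation symbols (which are themselves elements of free-algebra carriers) and confirming that the definedness side-conditions in \autoref{D:evaluation} are always met — here one uses that the arity constraints of an operation $\sigma\in|T\Delta|$ are exactly the edges of its arity $\Delta$, which a relation-preserving assignment transports into the target, together with the fact that $\Delta\in\pres_\kappa$ is of the form $\Refl(Y,E)$ so that `satisfying the constraints in $\Delta$' matches the derivability bookkeeping — but no genuinely hard step arises, since enrichment of $\bbT$ guarantees that all the Kleisli extensions in play are $\BC$-morphisms.
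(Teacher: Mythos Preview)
Your approach is essentially the paper's: isolate the identity $e^\#(\sigma)=e^*(\sigma)$ for $\sigma\in|T\Gamma|$ and $e\colon\Gamma\to TX$ relation-preserving (the paper labels this as a displayed equation and uses it throughout), then dispatch the three axiom families via the Kleisli laws exactly as you outline.

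One point to tighten: before checking the axioms you need $TX$ to be a $\Sigma_\bbT$-algebra at all, i.e.\ each $\sigma_{TX}\colon[\Gamma,TX]\to TX$ must be a $\BC$-morphism. The paper handles this first, writing $\sigma_{TX}=\pi_\sigma\cdot(-)^*$ and observing that $(-)^*\colon[\Gamma,TX]\to[T\Gamma,TX]$ is relation-preserving \emph{because $\bbT$ is enriched}. Your parenthetical ``enrichment is not even needed here --- $e^*$ is a morphism by the monad laws'' is correct for a \emph{single} $e^*=\mu_X\cdot Te$, but that is not enough: relation-preservation of $\sigma_{TX}$ requires that an edge $\alpha(f_i)$ in $[\Gamma,TX]$ be carried to an edge $\alpha(f_i^*(\sigma))$ in $TX$, which is exactly the statement that $(-)^*$ preserves the pointwise structure on hom-objects --- and that is enrichment. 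You acknowledge this at the very end, but the earlier remark is misleading and the check itself is missing from your plan.
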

\begin{thm}\label{T:monad-theory}
  Each enriched $\lambda$-accessible monad
  $\bbT$ is the free-algebra monad of its induced variety
  $\V_{\bbT}$, with the free algebra on~$X$ given by the canonical algebra~$TX$.
\end{thm}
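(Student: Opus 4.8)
The plan is to exhibit a monad isomorphism $\bbT \cong \bbT_{\V_\bbT}$, where $\bbT_{\V_\bbT}$ is the free-algebra monad of the induced variety (\autoref{R:free-monad}). Since both monads arise from adjunctions over $\BC_0$, it suffices by \autoref{R:kleisli-mnd} to compare the associated Kleisli triples: I will show that the assignment $X \mapsto TX$, together with the unit~$\eta$ of~$\bbT$ and the Kleisli extension $f \mapsto f^*$, is precisely the Kleisli triple of $\bbT_{\V_\bbT}$ — equivalently, that for each $X \in \BC$ the canonical algebra~$TX$ (which lies in $\V_\bbT$ by \autoref{L:TX-in-VT}) is the \emph{free} $\V_\bbT$-algebra on~$X$ with universal arrow $\eta_X \colon X \to TX$.

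First I would check that $\eta_X$ is an admissible (relation-preserving) map $X \to TX$; this is immediate since~$\eta$ is a natural transformation in~$\BC$. Next comes the heart of the argument: given any $A \in \V_\bbT$ and any $\BC$-morphism $g \colon X \to A$, I must produce a unique $\Sigma_\bbT$-homomorphism $\hat g \colon TX \to A$ with $\hat g \cdot \eta_X = g$. For \emph{existence} when $X = \Gamma \in \pres_\lambda$: every element $\sigma \in |T\Gamma|$ is an operation symbol of $\Sigma_\bbT$ of arity~$\Gamma$, and I would define $\hat g(\sigma) := \sigma_A(g)$, i.e.\ apply the operation $\sigma_A \colon [\Gamma, A] \to A$ to~$g$ itself (using that $g \in [\Gamma,A]$, which needs $g$ to be relation-preserving). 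Axioms~\ref{D:inducedtheory-axiom-1} force $\hat g$ to be relation-preserving, axiom~\ref{D:inducedtheory-axiom-3} gives $\hat g \cdot \eta_\Gamma = g$, and axiom~\ref{D:inducedtheory-axiom-2} (which encodes the Kleisli-extension law $f^*(\sigma) = \sigma(f)$) is exactly what makes $\hat g$ commute with all the operations $\sigma_{TX}(f) = f^*(\sigma)$, hence a homomorphism. For \emph{uniqueness}: any homomorphism $h$ with $h \cdot \eta_\Gamma = g$ must satisfy $h(\sigma) = h(\sigma_{T\Gamma}(\eta_\Gamma)) = h(\mathrm{id}_{T\Gamma}^*(\sigma)) = \ldots$; more directly, $\sigma = \sigma_{T\Gamma}(u_\Gamma) = (\eta_\Gamma)^*(\sigma)$ by the Kleisli laws \eqref{eq:kleisli-laws}, so $h(\sigma) = h((\eta_\Gamma)^*(\sigma))$, and homomorphy together with $h \cdot \eta_\Gamma = g$ pins this down to $\sigma_A(g)$. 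This handles finitely/$\lambda$-presentably generated~$X$; for general $X \in \BC$ one writes $X$ as a $\lambda$-directed colimit of objects in $\pres_\lambda(\BC)$ and uses that both $T$ (being $\lambda$-accessible) and the free $\V_\bbT$-algebra functor (being $\kappa$-accessible with $\kappa = \lambda$, by \autoref{R:free-monad}) preserve this colimit, so the universal property transfers.

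Finally I would verify that the bijection $\BC(X,A) \cong \V_\bbT(TX, A)$ just established is natural in~$X$ and~$A$, so that it assembles into an adjunction whose induced monad on $\BC_0$ has underlying functor~$T$, unit~$\eta$, and — tracing through \autoref{R:kleisli-mnd} — Kleisli extension $f^*$; thus the induced monad \emph{is} $\bbT$, and the free algebra on~$X$ is the canonical algebra~$TX$ as claimed.

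The main obstacle I anticipate is the verification that $\hat g$ as defined on a presentable context~$\Gamma$ genuinely commutes with \emph{every} operation of \emph{every} arity $\Delta \in \pres_\lambda$, not merely those of arity~$\Gamma$: this is where axiom~\ref{D:inducedtheory-axiom-2} must be invoked in full generality, matching the abstract naturality/associativity content of the monad against the syntactic term $\sigma(f)$, and one has to be careful that the term $\sigma(f)$ is evaluated in~$A$ correctly (its definedness, guaranteed by the $(\EAr)$/$(\mathsf{Arity})$ reasoning of \autoref{L:admissible}, relying on enrichment so that $\sigma_A$ respects the edges in $\arity(\sigma)$). The enrichment hypothesis on~$\bbT$ enters precisely here, ensuring that the canonical algebra structure interacts correctly with the relational structure; without it the term-evaluation maps $e^\#$ need not be defined and the homomorphism condition would not typecheck. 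A secondary, more routine obstacle is the colimit argument for non-presentable~$X$, which requires knowing that $\pres_\lambda(\BC)$ is closed under the relevant colimits and that the comparison morphisms behave well — but this is standard accessibility bookkeeping given \autoref{P:presentable} and \autoref{cor:variety-accessible}.
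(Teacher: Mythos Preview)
Your proposal is correct and follows essentially the same route as the paper: define $\hat g(\sigma)=\sigma_A(g)$ on presentable contexts, use axiom types~\ref{D:inducedtheory-axiom-1},~\ref{D:inducedtheory-axiom-2},~\ref{D:inducedtheory-axiom-3} for relation-preservation, the homomorphism property, and the unit law respectively, extend to arbitrary~$X$ via $\lambda$-directed colimits and accessibility of~$T$, and finish by comparing Kleisli triples. One small point: in the colimit step the paper does not invoke accessibility of the (as yet unidentified) free $\V_\bbT$-algebra functor, but instead argues that $TX=\colim T\Gamma_i$ in~$\BC$ lifts to a colimit in~$\V_\bbT$ because the forgetful functor creates $\lambda$-directed colimits (\autoref{P:creation}, \autoref{rem:alg-sigma-props}\ref{item:alg-sigma-accessble}); and the final Kleisli-triple comparison requires one concrete check you left implicit, namely that each $h^*\colon TX\to TY$ is a $\Sigma_\bbT$-homomorphism, which follows directly from the Kleisli law $(h^*\cdot f)^*=h^*\cdot f^*$ and the definition $\sigma_{TX}(f)=f^*(\sigma)$.
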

\begin{rem}
  We have thus shown that given a $\lambda$-ary Horn
  theory~$\Horn$, we we can translate
  $\lambda$-accessible monads on $\Str(\Horn)$ back into
  $\lambda$-ary theories, preserving the notion of model. For example, every
  $\omega_1$-accessible monad
  on~$\Met$ is induced by an
  $\omega_1$-ary theory, as illustrated in \autoref{expl:cmet}. The
  situation is more complicated for
  $\kappa$-ary monads where
  $\kappa<\lambda$. E.g.~we can generate a finitary monad
  on~$\Met$ from a single binary operation of type $\{(x,y)\in A^2\mid
  d(x,y)<1/2\}\to
  A$. This monad is not induced by any theory with operations of
  internally finitely presentable (i.e.~discrete) arity, in particular
  neither by an
  $\omega$-ary theory in our framework nor by a quantitative algebraic
  theory~\cite{MPP16}.
\end{rem}
\section{Conclusions}

\noindent We have introduced the framework of \emph{relational logic}
for reasoning about algebraic structure on categories of (finitary)
relational structures axiomatized by possibly infinitary Horn
theories, such as partial orders or metric spaces. We have proved
soundness and completeness of a generic algebraic deduction system,
and we have shown that
$\lambda$-ary relational algebraic theories are in correspondence with
$\lambda$-accessible enriched monads when the underlying Horn theory 
is also $\lambda$-ary (where
`$\lambda$-ary' refers to the arity of operations for relational algebraic
theories, and to the number of premisses in axioms for Horn
theories). Our results allow for a straightforward specification also
of infinitary constructions such as metric completion.

The theory-to-monad direction of the above-mentioned correspondence
remains true for $\kappa$-ary relational algebraic theories and
$\kappa$-accessible monads on categories of models of $\lambda$-ary
Horn theories for $\kappa < \lambda$, e.g.~when looking at monads and
theories on metric spaces. One open end that we leave for future
research is to obtain a more complete coverage of this case, which
will require a substantial generalization of both the way arities of
operations are defined (these can no longer be taken to be objects of
the base category) and in the way the axioms of the theory are
organized, likely using more topologically-minded approaches.

\takeout{                     
\section{Setting}

\texttt{To be determined precisely, very general at the moment, maybe
  go down to monoidal or even Cartesian closed later. $\BC$ should
  probably be a category of relational structures.}

Let~$\BV$ be a closed symmetric~\lsnote{At
  least~\cite{LucyshynWright15} assumes this} monoidal category, let
$\BC$ be a cotensored $\BV$-enriched category, with cotensors/powers
denoted $V\power C$ (recall that these are characterized as being
equipped with a natural isomorphism
$\BC(D,V\power C)\cong \BV(V,\BC(D,C))$), and let~$\CJ$ be a set of
$\BV$-objects called \emph{arities}. A \emph{signature} is a
set~$\Sigma$ of \emph{operation symbols}~$\sigma$ with assigned
arities $\arity(\sigma)\in\CJ$. A \emph{$\Sigma$-algebra}~$A$ consists
of a $\BC$-object, also denoted~$A$ by abuse of notation, and for each
$\sigma\in\Sigma$ a $\BC$-morphism
\begin{equation*}
  \sigma^A\colon \arity(\sigma)\power A\to A.
\end{equation*}
A \emph{homomorphism} $A\to B$ of $\Sigma$-algebras $A,B$ is a
$\BC$-morphism $h\colon A\to B$ such that for each $\sigma\in\Sigma$,
the diagram
\begin{equation*}
  \begin{tikzcd}
    \arity(\sigma)\power A \ar{r}{\sigma^A}
    \ar{d}[left]{\arity(\sigma)\power h}& A \arrow{d}[right]{h}\\
    \arity(\sigma)\power B \ar{r}{\sigma^B} & B
  \end{tikzcd}
\end{equation*}
commutes.

A \emph{$\BV$-enriched monad} on~$\BC$ consists of an enriched functor
on $\BC$, equipped with enriched natural transformations satisfying
the usual equational axioms cast as commutative diagrams
in~$\BV$.\lsnote{Eventually make this more explicit}
}

\newpage
\bibliography{monadsrefs}

\clearpage
\appendix

\section{Details for \autoref{sec:relationalstructures}}\label{sec:app-relationalstructures}

\subparagraph*{Details for~\autoref{E:HornCategories}}

\emph{$L$-valued relations:} Every $\Horn_L$-model~$(X,E)$ induces an
$L$-valued relation $P_E$ on $X$ by
\[
  P_E(x, y)=\bigwedge\{p\in L_0\mid \alpha_p(x,y)\in E\}.
\]
Conversely, every $L$-valued relation $P$ on a set~$X$ induces a
$\Horn_L$-model $(X,E_P)$ given by
\[
  E_P=\{\alpha_p(x,y)\mid p\in L_0,  p\ge P(x, y) \}.
\]
The given conditions on~$L_0$ ensure precisely that these
constructions are mutually inverse: By meet density of~$L_0$, it is
straightforward to see that $P_{E_P}=P$. Moreover, given a
$\Horn_L$-model $(X,E)$, we have $E_{P_E}=E$. Here, `$\supseteq$' is
trivial; we prove `$\subseteq$'. So suppose that
\begin{equation*}
  q\ge \bigwedge\{p\in L_0\mid \alpha_p(x,y)\in E\};
\end{equation*}
we have to show $\alpha_q(x,y)\in E$. We
distinguish the following cases:
\begin{enumerate}
\item If $r:=\bigwedge\{p\in L_0\mid \alpha_p(x,y)\in E\}\in L_0$,
  then by \textbf{(Arch)}, $\alpha_r(x,y)\in E$, and hence
  $\alpha_q(x,y)\in E$ by \textbf{(Up)}.
\item Otherwise, the assumptions on~$L_0$ imply that there
  is~$p\in L_0$ such that $\alpha_p(x,y)\in E$ and $q\ge p$; then
  $\alpha_q(x,y)\in E$ by \textbf{(Up)}.
\end{enumerate}

\emph{Metric spaces:} As indicated in the main text, this is
essentially a special case of the one for $L$-valued relations:
$L_0:=[0,1]\cap\Q\subseteq [0,1]=:L$ satisfy the relevant assumptions,
so we have mutually inverse constructions as indicated above and in
the main text. Under this correspondence, the axioms of metric spaces
clearly correspond precisely to the axioms \textbf{(Refl)},
\textbf{(Equal)}, \textbf{(Sym)}, and \textbf{(Triang)}
of~$\Horn_\Met$.

\begin{proof}[{Proof of \autoref{prop:pres-object}}]

\ref{item:refl-pres}$\implies$\ref{item:pres}: Generally, left adjoint
functors (in this case,~$\Refl$) preserve $\lambda$-presentable
objects if their right adjoint (in this case, the inclusion
$\Str(\Rels,\RelAx)\into\Str(\Rels)$) preserves $\lambda$-directed
colimits (see e.g.~\cite[Lem.~2.4(1)]{AdamekEA19}).

\ref{item:pres}$\implies$\ref{item:generated}: The bound on the
cardinality is as in the more general case (see the proof of
\autoref{P:fpchar}). We can assume that $r_X=\id_X$. Write $X$ as the
$\lambda$-directed colimit in~$\Str(\Rels)$ of the objects $(|X|,E)$
such that $E\subseteq \Edge(X)$ and $\card E<\lambda$. This colimit is
preserved by the left adjoint~$\Refl$. Now apply
$\lambda$-presentabiliy of~$X$ to see that
$r_X=\id_X\colon X\to \Refl X$ factorizes through $Ri$ for one of the
inclusions $i\colon (|X|, E) \to X$; that is, $Ri$ is a split
epimorphism. The claim follows once we show that~$Ri$ is also a
monomorphism. To this end, consider the naturality square
\begin{equation*}
  \begin{tikzcd}[column sep=large]
    (|X|,E)  \arrow[r,"r_{(|X|,E)}"] \arrow[d,"i"] & \Refl(|X|,E)
    \arrow[d,"\Refl i"]\\
    X \arrow[r,"r_X" below] & \Refl X
  \end{tikzcd}
\end{equation*}
In this diagram, the underlying maps of~$i$ and $r_X$ are bijective,
so the underlying map of $r_{(|X|,E)}$ is injective, and hence also
bijective since~$r$ is componentwise surjective. It follows that the
underlying map of~$\Refl i$ is bijective, hence~$\Refl i$ is monic.

\ref{item:generated}$\implies$\ref{item:refl-pres}: Take
$(Y,E)=(|X|,E)$.
\end{proof}

\begin{proof}[Proof of \autoref{P:closedmonoidal}]
We show that~$u_Y$ is indeed a universal arrow; that is, we show
that~$u_Y$ is actually a morphism of the claimed type, and that~$u_Y$
has the following universal property: Given a further
$\Str(\Rels)$-morphism $g\colon Y\to[X,Z]$, there exists a unique
morphism $g^\sharp\colon Y\otimes X\to Z$ such that $[X,g^\sharp]=g$:
\begin{equation*}
  \begin{tikzcd}
    Y \arrow[r,"u_Y"]
     \ar{dr}[swap]{\forall g}
    & { [X,Y\otimes X]}
    \arrow[d,"{[X,g^\sharp]}"]
    &
    Y \otimes X
    \ar[dashed]{d}{\exists! g^\sharp}
    \\
    & {[X,Z]} & Z
  \end{tikzcd}
\end{equation*}
We prove the latter property first. We put
\begin{equation*}
  g^\sharp(y,x)=g(y)(x).
\end{equation*}
This is clearly the only map making the diagram commute, so we are
only left to show that it is indeed a morphism. So let
$e\in\Edge(Y\otimes X)$; we have to show $g^\sharp\cdot
e\in\Edge(Z)$. We distinguish cases on~$e$ according to the definition
of $Y\otimes X$:
\begin{itemize}
\item $\pi_1\cdot e$ is constant, say with constant value~$y$, and
  $\pi_2\cdot e\in\Edge(X)$. Then
  $g^\sharp\cdot e=g(y)\cdot\pi_2\cdot e\in\Edge(Z)$ because $g(y)$ is a
  morphism\lsnote{This part may fail if edges can have arity~$0$},
  and
\item $\pi_2\cdot e$ is constant, say with constant value~$x$, and
  $\pi_1\cdot e\in\Edge(Y)$. Then
  $g^\sharp\cdot e=\pi_x\cdot g\cdot e\in\Edge(Z)$ because~$g$ is a
  morphism.
\end{itemize}
We next show that $u_Y$ has the claimed type; that is, we show that
for $y\in Y$, $u_Y(y)\colon X\to Y\otimes X$ is a morphism. So let
$e\in\Edge(X)$. Then $u_Y(y)\cdot e\in\Edge(Y\otimes X)$ as required,
since $\pi_1\cdot u_Y(y)\cdot e$ is constant and
$\pi_2\cdot u_Y(y)\cdot e=e\in\Edge(X)$. Finally, we show that~$u_Y$
is a morphism. So let $e\in\Edge(Y)$, and let $x\in X$; we have to
show that $\pi_x\cdot u_Y\cdot e\in\Edge(Y\otimes X)$. This holds
because $\pi_2\cdot \pi_x\cdot u_Y\cdot e$ is constant and
$\pi_1\cdot \pi_x\cdot u_Y\cdot e=e\in\Edge(Y)$.
\end{proof}

\takeout{\subsection{Proof of \autoref{P:closedmonoidal}}
We show that
\[
  \ev_Y\colon [X, Y]\otimes X\to Y, \quad (f, x)\mapsto f(x)
\]
is indeed a couniversal arrow, i.e.~is a morphism in $\Str(\Rels)$ and
enjoys the following universal property: For every morphism
$g\colon Z\otimes X\to Y$ in $\Str(\Rels)$, there exists a unique
morphism $\widetilde{g}\colon Z\to [X, Y]$ such that
$g=\ev_Y\cdot (\widetilde{g}\otimes X)$, all as in the diagram below:
\begin{equation}\label{eq:curry}
  \begin{tikzcd}
    {[X, Y]}\otimes X
    \ar{r}{\ev_Y}
    &
    Y
    \\
    Z\otimes X
    \ar{ru}[swap]{g}
    \ar[dashed]{u}{\tilde{g}\otimes X} 
  \end{tikzcd}
  \qquad
  \begin{tikzcd}
    {[X,Y]}
    \\
    Z \ar[dashed]{u}{\tilde{g}}
  \end{tikzcd}
\end{equation}
We prove the latter property first, defining $\widetilde{g}$ by
\[
  \widetilde{g}(z)= \lambda x.\, g(z, x).
\]
Clearly, $\widetilde g$ is the unique map making the desired triangle
in~\eqref{eq:curry} commute. Hence, we only need to prove that it is a
morphism in $\Str(\Rels)$.

To this end, let $e\in\Edge(Z)$; 
we have to show that $\widetilde{g}\cdot e\in\Edge([X, Y])$, i.e.~that
$\pi_x\cdot\widetilde{g}\cdot e\in\Edge(Y)$ for all $x\in X$.
Let $x\in X$, and define $k_x\colon Z\to Z\times X$ by
$k_x(z)=(z,x)$. Observe that $k_x\cdot e\in\Edge(Z\otimes X)$, since
$\pi_1\cdot k_x\cdot e = e\in\Edge(Z)$ and $\pi_2\cdot k_x\cdot e$ is
constant. Thus, $g\cdot k_x\cdot e\in\Edge(Y)$, since~$g$ is a
morphism. However, $g\cdot k_x = \pi_x\cdot\widetilde g$, so we are
done. 

It remains to show that $\ev_Y$ is a morphism. So let~$e$ be an edge
in $[X,Y]\otimes X$; we have to show that $\ev_Y\cdot
e\in\Edge(Y)$. We distinguish cases on~$e$ according to the definition
of~$\otimes$:
\begin{itemize}
\item $\pi_1\cdot e=(\alpha,f)$ is constant, say with~$f$ having
  constant value $h\in[X,Y]$, and $\pi_2\cdot e\in\Edge(X)$. Then
  $\ev_Y\cdot e = h\cdot (\pi_2\cdot e)\in\Edge(Y)$ because~$h$ is a
  morphism.
\item $\pi_2\cdot e=(\alpha,f)$ is constant, say with~$f$ having
  constant value $x\in X$,and $\pi_1\cdot e\in\Edge([X,Y])$. Then
  $\ev_Y\cdot e=\pi_x\cdot(\pi_1\cdot e)\in\Edge(Y)$ by definition of
  $\Edge([X,Y])$.
\end{itemize}
\qed
}

\noindent We have claimed that the internal hom restricts to
$\Str(\Horn)$, that is:
\begin{lem}\label{L:internalhom}
$[X, Y]\in\Str(\Rels, \RelAx)$ for all $Y\in\Str(\Rels, \RelAx)$.
\end{lem}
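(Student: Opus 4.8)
The plan is to verify directly that the $\Rels$-structure $[X,Y]$ satisfies every axiom $\Phi \rimpl \psi$ of $\RelAx$ whenever $Y$ does, working pointwise. The key observation is that both the edge relation of $[X,Y]$ and the entailment/satisfaction conditions are defined by quantifying a condition "over all $x \in X$": an edge $e$ over $[X,Y]$ lies in $\Edge([X,Y])$ iff $\pi_x \cdot e \in \Edge(Y)$ for every $x \in X$ (and similarly for the freshly added equality $=$, since $\overline{[X,Y]}$ has $=$ interpreted as the diagonal, which projects to the diagonal of $Y$). So the whole satisfaction problem factors through the family of projection maps $\pi_x \colon |[X,Y]| = \Str(\Rels)(X,Y) \to |Y|$.

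Concretely, first I would fix an axiom $(\Phi \rimpl \psi) \in \RelAx$ and a valuation $\kappa \colon \Vars \to |[X,Y]|$ such that $[X,Y] \models \kappa \cdot \phi$ for all $\phi \in \Phi$; the goal is $\overline{[X,Y]} \models \kappa \cdot \psi$. For each $x \in X$, post-compose to obtain the valuation $\kappa_x := \pi_x \cdot \kappa \colon \Vars \to |Y|$. By the definition of $\Edge([X,Y])$, the hypothesis $[X,Y] \models \kappa \cdot \phi$ gives $Y \models \kappa_x \cdot \phi$ for each $\phi \in \Phi$ and each $x$. Since $Y$ is a model of $\RelAx$, it satisfies $\Phi \rimpl \psi$, so $\overline{Y} \models \kappa_x \cdot \psi$ for every $x \in X$. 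If $\psi$ is a $\Rels$-edge this reads $Y \models \kappa_x \cdot \psi$ for all $x$, which by definition is exactly $[X,Y] \models \kappa \cdot \psi$; if $\psi$ is the equality edge $x' = y'$, then $\overline Y \models \kappa_x \cdot \psi$ says $\kappa_x(x') = \kappa_x(y')$ in $|Y|$ for all $x \in X$, i.e.\ the two morphisms $\kappa(x'), \kappa(y') \colon X \to Y$ agree at every point, hence are equal in $\Str(\Rels)(X,Y) = |[X,Y]|$, which is precisely $\overline{[X,Y]} \models \kappa \cdot \psi$. This closes the argument.

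I would also briefly note at the outset that $[X,Y]$ is a well-defined object of $\Str(\Rels)$ regardless (that is part of \autoref{D:internalhom}), and that the construction $\overline{(-)}$ commutes with $\pi_x$ in the sense just used, so no separate bookkeeping about the $=$ symbol is required beyond the two-case split on the form of $\psi$. Since there is no subtlety about infinitary $\Phi$ — the whole proof never looks inside $\Phi$ except to quote the model property of $Y$ — the argument is uniform in the arity $\lambda$.

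There is essentially no hard part here: the only thing to be careful about is the handling of the fresh equality symbol $=$ in the conclusion $\psi$, since that is where one passes from "equal at each point of $X$" to "equal as morphisms", using that morphisms out of $X$ are determined by their underlying functions. Everything else is an unwinding of definitions. I would therefore keep the write-up to the short pointwise computation above.
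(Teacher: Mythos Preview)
Your proposal is correct and follows essentially the same pointwise approach as the paper: fix an axiom and a valuation into $[X,Y]$, push it through each projection $\pi_x$ to get valuations into~$Y$, use that~$Y$ is a model, and reassemble. The only difference is that you explicitly split on whether the conclusion~$\psi$ is a $\Rels$-edge or an equality edge, whereas the paper's proof writes the conclusion as $\beta(g)$ and verifies $\beta(e\cdot g)\in\Edge([X,Y])$ without separately treating the case $\beta = {=}$; your extra paragraph handling equality via ``agree at every point, hence equal as morphisms'' is a welcome bit of care.
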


\begin{proof}[Proof of~\autoref{L:internalhom}]
Let $\Phi\rimpl\beta(g)$ be an axiom and let 
$e\colon\Vars\to\Str(\Rels)(X, Y)$ be a map such that 
$\alpha(e\cdot f)\in\Edge([X, Y])$ for all $\alpha(f)\in\Phi.$
We proceed to show that $\beta(e\cdot g)\in\Edge([X, Y])$.
That is, we verify that $\beta(e\cdot g)_x)\in\Edge(Y)$ 
for all $x\in X$. 

To this end, it suffices to show that 
$\beta(\xi_x\cdot g)\in\Edge(Y)$ for all $x\in X$, where 
$\xi_x\colon\Vars\to Y$ is the assignment 
$v\mapsto e(v)(x).$ Indeed, we have
\[
\xi_x\cdot g(i)=\xi_x(g(i))=e(g(i))(x)= (e\cdot g)_x(i)
\]
for all $i\in\arity(\beta)$. Thus, $\xi_x\cdot g=(e\cdot g)_x$.
To conclude the proof, note that for each $\alpha(f)\in\Phi$
we have $\alpha((e\cdot f)_x)\in\Edge(Y)$ for all $x\in X$. 
Hence also $\alpha(\xi_x\cdot f)\in\Edge(Y)$ for all $x\in X$. 
Since $Y$ is an $\RelAx$-model, it follows that 
$\beta(\xi_x\cdot g)\in\Edge(Y)$ for all $x\in X$, as desired. 
\end{proof}

\begin{proof}[Proof of \autoref{c:tensor-refl}]
  From \autoref{P:closed} below, using
  \autoref{L:internalhom}.
\end{proof}
\begin{propn}\label{P:closed}
  Let $\iota\colon \CatC'\hookrightarrow\CatC$ be a full reflective
  subcategory, with reflector~$R\colon \CatC \to \CatC'$, of a closed
  monoidal category $(\CatC, \otimes, I)$ such that $[C, D]$ lies in
  $\CatC'$ for every $C, D \in \CatC'$. Then $\CatC'$ is also closed
  monoidal, with monoidal product $R(X\otimes Y)$.
\end{propn}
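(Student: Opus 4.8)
The plan is to treat this as a form of Day's reflection theorem for closed monoidal categories and to give a direct proof; I assume, as in the intended application (\autoref{c:tensor-refl}), that $\CatC$ is symmetric monoidal closed, the general biclosed case being entirely analogous. Identify $\CatC'$ with its image under the fully faithful $\iota$, and write $\eta$ for the reflection unit, $\eta_W\colon W\to RW$. The candidate structure on $\CatC'$ is: tensor $X\otimes'Y:=R(X\otimes Y)$ (a functor, being $R$ composed with $\otimes$), unit $I':=RI$, and internal hom the ambient $[X,Y]$, which lies in $\CatC'$ by hypothesis whenever $X,Y\in\CatC'$. That $[X,-]$ is right adjoint to $(-)\otimes'X$ on $\CatC'$ follows from the chain of natural isomorphisms, for $Y,Z\in\CatC'$,
\[
  \CatC'(R(Y\otimes X),Z)\cong\CatC(Y\otimes X,Z)\cong\CatC(Y,[X,Z])\cong\CatC'(Y,[X,Z]),
\]
where the first isomorphism is the reflection adjunction (valid since $Z\in\CatC'$), the second is closedness of $\CatC$, and the third uses that $[X,Z]\in\CatC'$ (hypothesis) together with full faithfulness of $\iota$. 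Hence, once $\otimes'$ is shown to be a monoidal product, $\CatC'$ is automatically closed, so the remaining task is to supply coherent associativity and unit constraints.

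The technical core is the following: \emph{for $X\in\CatC'$ and any $W\in\CatC$, the morphism $R(X\otimes\eta_W)\colon R(X\otimes W)\to R(X\otimes RW)$ is an isomorphism}, and symmetrically $R(\eta_W\otimes X)$ is an isomorphism; equivalently, $R$ sends $X\otimes f$ and $f\otimes X$ to isomorphisms whenever $Rf$ is an isomorphism. I would prove this by the same device: for every $D\in\CatC'$ there is a chain of isomorphisms, natural in $D$ and using the symmetry and closedness of $\CatC$ together with the hypothesis,
\begin{align*}
  \CatC(X\otimes RW,D) &\cong \CatC(RW,[X,D]) \cong \CatC'(RW,[X,D]) \\
  &\cong \CatC(W,[X,D]) \cong \CatC(X\otimes W,D),
\end{align*}
the essential step being that $[X,D]\in\CatC'$ (as $X,D\in\CatC'$). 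The composite bijection is precomposition with $X\otimes\eta_W$; since $R(X\otimes W)$ and $R(X\otimes RW)$ represent on $\CatC'$ the functors $\CatC(X\otimes W,-)$ and $\CatC(X\otimes RW,-)$, respectively, representability (Yoneda) yields that $R(X\otimes\eta_W)$ is invertible.

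With this lemma in hand, I would define the associator on $\CatC'$ as the composite
\begin{align*}
  R\bigl(R(X\otimes Y)\otimes Z\bigr) &\xrightarrow{\ R(\eta_{X\otimes Y}\otimes Z)^{-1}\ } R\bigl((X\otimes Y)\otimes Z\bigr) \\
  &\xrightarrow{\ R(\alpha_{X,Y,Z})\ } R\bigl(X\otimes(Y\otimes Z)\bigr) \xrightarrow{\ R(X\otimes\eta_{Y\otimes Z})\ } R\bigl(X\otimes R(Y\otimes Z)\bigr),
\end{align*}
whose first arrow is the inverse of the lemma isomorphism $R(\eta_{X\otimes Y}\otimes Z)$, whose last arrow is the lemma isomorphism $R(X\otimes\eta_{Y\otimes Z})$, and whose middle arrow is $R$ applied to the associator $\alpha$ of $\CatC$; naturality of the resulting $\alpha'$ is inherited from naturality of $\eta$ and $\alpha$ and functoriality of $R$. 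The unitors arise similarly from $R(\lambda)$, $R(\rho)$, the lemma isomorphisms $R(\eta_I\otimes X)$ and $R(X\otimes\eta_I)$, and the isomorphism $\eta_X\colon X\to RX$ (an isomorphism, as a reflective subcategory induces an idempotent monad); and since $\CatC$ is symmetric, $R$ of its braiding furnishes a braiding on $\CatC'$.

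The step I expect to be the main obstacle is verifying the pentagon, triangle (and hexagon) identities for this structure: not for conceptual reasons, but because of the bookkeeping. Each coherence diagram for $(\CatC',\otimes',I',\alpha',\lambda',\rho')$, once the lemma isomorphisms along its edges are expanded and the naturality squares of $\eta$ are pasted in, collapses to the $R$-image of the corresponding coherence diagram of $\CatC$, which commutes; one must check that the inserted lemma isomorphisms along distinct routes genuinely cancel and that every intervening square commutes by naturality. Alternatively, and more cheaply, one can observe that the hypothesis is exactly what makes the class of morphisms inverted by $R$ closed under tensoring (on either side) by objects of $\CatC'$, and invoke Day's reflection theorem: a reflective localization at such a class carries a closed monoidal structure with tensor $R(X\otimes Y)$.
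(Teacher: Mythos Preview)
Your proof is correct, and its first paragraph is essentially identical to the paper's proof: the paper establishes the closedness adjunction by exactly the same chain of natural bijections
\[
  \CatC'(R(Y \otimes X),  Z)
  \cong
  \CatC (Y \otimes X, \iota Z)
  \cong
  \CatC( Y, [X,\iota Z])
  \cong
  \CatC'(Y, [X, Z]),
\]
and then stops. The difference is that you go on to supply the monoidal coherence data (associator, unitors) via the lemma that $R(X\otimes\eta_W)$ is invertible for $X\in\CatC'$, and sketch the verification of the pentagon and triangle identities, finally noting Day's reflection theorem as a shortcut. The paper simply omits all of this. So your approach is the same as the paper's on the part the paper actually proves, and more thorough on the part the paper leaves implicit; for the purposes of the intended application (\autoref{c:tensor-refl}), only the adjunction is used, which may explain why the paper felt no need to spell out the coherence.
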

\begin{proof}
  We have the following chain of bijections, natural in $Y$ and $Z$
  (and omitting $\iota$ in front of $X$ and $Y$):
  \[
    \CatC'(R(Y \otimes X),  Z)
    \cong 
    \CatC (Y \otimes X, \iota Z)
    \cong
    \CatC( Y, [X,\iota Z])
    \cong
    \CatC'(Y, [X, Z]),
  \]
  where the last step uses that $[X, \iota Z]$ lies in $\CatC'$ and
  $\CatC'$ is full. 
\end{proof}

\begin{proof}[Proof of~\autoref{prop:internally-pres}]
  One extracts from~\cite[Proof of Proposition~2.23]{AR94} that if $F$
  is a right adjoint functor between locally $\lambda$-presentable
  categories whose left adjoint preserves externally
  $\lambda$-presentable objects, then~$F$ is $\lambda$-accessible. We
  apply this to $F=[X,-]$ for~$X$ externally
  $\lambda$-presentable. Preservation of externally
  $\lambda$-presentable objects by the left adjoint $(-)\otimes X$
  of~$F$ means precisely that the class of externally
  $\lambda$-presentable objects is closed under~$\otimes_\Horn$ as
  claimed in the main body of the paper. By
  \autoref{prop:pres-object}, this follows immediately from the
  corresponding statement for $\Horn=(\Rels,\emptyset)$. But the
  latter is clear by the description of externally
  $\lambda$-presentable objects in~$\Str(\Rels)$ in terms of simple
  cardinality constraints.
\end{proof}

\section{Details for~\autoref{S:presentableobjects}}\label{sec:app-compact}

\subparagraph*{Details for \autoref{expl:internal-fp}}

\ref{item:non-distrib}: \emph{Example of a complete lattice~$L$ where
  binary joins fail o distribute over directed infima:} Take $L$ to
consist of two copies of the reals with their usual orderings, with
elements denoted $l(x)$ and $r(x)$, respectively, for
$x\in\mathbb{R}$, and additionally a top element~$\top$, a bottom
element~$\bot$, and an element~$b$ that satisfies $l(0)\le b$,
$r(0)\le b$, and no other inequalities other than the ones entailed
by~$\top$ and $\bot$ being a top and a bottom element,
respectively. Then $\bigwedge_n (l(1/n)\vee r(1/n)) = \top$ but
$(\bigwedge_n l(1/n))\vee(\bigwedge_n(r(1/n))=l(0)\vee r(0)=b$.

\ref{item:non-distrib-fp}: In the same~$L$, there is also an element
as prescribed in \ref{item:non-distrib-fp}, namely $l=l(1)$.

\begin{proof}[Proof of \autoref{lem:compact-generated}]
  Given $(X,E)$ $\kappa$-compact, apply $\kappa$-compactness to the
  cover~$E$ of~$(X,E)$.
\end{proof}

\noindent We next verify the characterization of weakly
$\kappa$-presentable objects in the locally $\lambda$-presentable
category $\BC$, where $\kappa<\lambda$ is an infinite cardinal, as
precisely the $\kappa$-compact objects of cardinality $<\lambda$
(\autoref{P:fpchar})

The main essence of the characterization is the captured inn the
following lemmas:

\begin{lem}\label{lem:compact}
  Let $X\in\Str(\Rels,\RelAx)$.
  \begin{enumerate}
  \item $X$ is $\kappa$-compact iff for every morphism of the form
    $f\colon X\to\Refl(Y,E)$ for $(Y,E)\in\Str(\Rels)$, there exist
    $E'\subseteq_\kappa E$ and $f'\colon X\to\Refl(Y,E')$ such that
    $f=\Refl j\cdot f'$, where $j\colon (Y,E')\to(Y,E)$ is the
    $\Str(\Rels)$-morphism carried by~$\id_Y$.
  \item If~$\card|X|<\kappa$, then $X$ is $\kappa$-compact iff for
    every morphism of the form $f\colon X\to\Refl(Y,E)$ for
    $(Y,E)\in\Str(\Rels)$, there exist $Y'\subseteq_\kappa Y$,
    $E'\subseteq_\kappa E$ and $f'\colon X\to\Refl(Y',E')$ such that
    $f=\Refl j\cdot f'$, where $j\colon (Y',E')\to(Y,E)$ is the
    $\Str(\Rels)$-morphism carried by the inclusion $Y'\into Y$.
\end{enumerate}
\end{lem}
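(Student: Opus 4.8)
The plan is to prove the two equivalences by reducing each to the definition of $\kappa$-compactness, exploiting that every $(Y,E)\in\Str(\Rels)$ is a $\kappa$-directed colimit of its sub-structures on $\kappa$-small data and that the reflector $\Refl$, being a left adjoint, preserves these colimits.

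For part~(1), the ``only if'' direction is immediate: given $f\colon X\to\Refl(Y,E)$, the pair $(Y,E)$ is a cover of $X$ via $f$ itself together with the identity inclusion $i=\id$, so one may literally take $Y=|X|$ only if $|X|\subseteq Y$; more carefully, $\kappa$-compactness as stated applies to covers $(Y,E)$ with $|X|\subseteq Y$ and the structure map $r_{(Y,E)}\cdot i$, whereas here $f$ is an arbitrary morphism into $\Refl(Y,E)$. So the first step is to observe that one can always enlarge $(Y,E)$: replace $Y$ by $Y'':=Y\sqcup(|X|\setminus Y)$ — or, since $f$ need not be injective, by $Y'':=Y$ together with a choice of preimages — and add to $E$ the edges forcing the new points to be identified appropriately, so that the resulting $(Y'',E'')$ is a genuine cover of $X$ whose reflection composite recovers $f$. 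Then apply the definition of $\kappa$-compactness to $(Y'',E'')$, obtaining $E'''\subseteq_\kappa E''$ and a factorization; intersecting $E'''$ back with $E$ (together with the finitely many glueing edges, which are still $\kappa$-small) yields the desired $E'\subseteq_\kappa E$ and $f'$. Conversely, the stated condition applied with $f=r_{(Y,E)}\cdot i$ for an actual cover gives back exactly \eqref{diag:compact}. The routine verification is that the added glueing edges do not disturb the reflection, which holds because they are precisely the edges $\bar{\RelAx}$ would derive anyway.

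For part~(2), assume $\card|X|<\kappa$. The ``if'' direction is trivial since $Y'\subseteq Y$ and $E'\subseteq_\kappa E$ is a special case of part~(1)'s condition. For ``only if'', start from part~(1): given $f\colon X\to\Refl(Y,E)$ we already have $E'\subseteq_\kappa E$ and $f'\colon X\to\Refl(Y,E')$ with $f=\Refl j\cdot f'$. Now the underlying set of $\Refl(Y,E')$ is a quotient of $Y$, but the relevant part of $Y$ is small: the image of $f'$ has at most $\card|X|<\kappa$ points, and each is represented by some element of $Y$; moreover $E'$ mentions fewer than $\kappa$ vertices of $Y$. Let $Y'\subseteq_\kappa Y$ be the union of a choice of such representatives with the (fewer than $\kappa$ many) vertices occurring in $E'$. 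Then $E'$ is already a set of edges over $Y'$, and one checks that the canonical map $\Refl(Y',E')\to\Refl(Y,E')$ — induced by the inclusion — admits a section along which $f'$ factors, because all the points and edges $f'$ actually ``sees'' live in the $Y'$-part. This requires knowing that $\Refl$ applied to the inclusion $(Y',E')\hookrightarrow(Y,E')$ is, on the relevant image, an isomorphism onto its image; this follows from the explicit description of $\Refl$ recalled after \autoref{prop:refl} (the quotient by derivable equality, with derivably-entailed edges), since derivations witnessing the needed equalities and edges use only finitely many — hence $<\kappa$ many — hypotheses from $E'$ and hence only vertices already in $Y'$ after possibly enlarging $Y'$ by those finitely many additional witnesses.

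The main obstacle I expect is bookkeeping in part~(2): ensuring that $Y'$ can be chosen $\kappa$-small while simultaneously (i) covering a set of representatives for $\mathrm{im}(f')$, (ii) containing all vertices of $E'$, and (iii) being closed under the finite sets of auxiliary vertices that appear in the derivations needed to factor $f'$ through $\Refl(Y',E')$ rather than merely $\Refl(Y,E')$. Since each individual derivation is finitary and there are fewer than $\kappa$ relevant edges and fewer than $\kappa=\max(\card|X|,\aleph_0)$ points in play, a single pass suffices to close up, using regularity of $\kappa$; but making this precise is where the care is needed. The reflection-preserves-$\kappa$-directed-colimits fact (from \autoref{prop:refl} and closure of $\Str(\Horn)$ under $\lambda$-directed, a fortiori $\kappa$-directed, colimits) is what guarantees that passing to the small sub-structure loses nothing.
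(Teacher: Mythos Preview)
Your overall strategy matches the paper's: for part~(1), turn an arbitrary $f\colon X\to\Refl(Y,E)$ into a genuine cover by adjoining $|X|$ to $Y$ with glueing edges, apply compactness, then project back to~$Y$; for part~(2), shrink~$Y$ to the vertices actually used. Two points deserve tightening.

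In part~(1), your justification for why the glueing edges can be discarded is off. You write that they ``are precisely the edges $\bar{\RelAx}$ would derive anyway,'' but this is false: the edges in $\Eq(x,s(f(x)))$ are \emph{not} derivable from~$E$ --- they are precisely what forces the enlarged structure to be a cover. What makes them disposable is rather that the collapse map $k=[s\cdot f,\id_Y]\colon |X|+Y\to Y$ sends each such edge to $\Eq(s(f(x)),s(f(x)))$, which is trivially satisfied. The paper carries this out explicitly: one checks that $k$ induces a morphism $p\colon(|X|+Y,E')\to\Refl(Y,E'\cap E)$, and then a diagram chase verifies that $p^\sharp\cdot l$ factors~$f$ through $\Refl(Y,E'\cap E)$. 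Your ``routine verification'' hides exactly this construction, and with the wrong reason given it is not clear you would find it.

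In part~(2), your closure worry is unnecessary, and the argument you sketch for it is more delicate than needed. Once $E'$ is fixed with $\card E'<\kappa$, take $Y'$ to be the union of the vertices appearing in~$E'$ with a set of chosen representatives $s\cdot f'[X]$ (for $s$ a splitting of $r_{(Y,E')}$). Since $Y'\neq\emptyset$, pick any retraction $q\colon Y\to Y'$; because every edge of~$E'$ already lives over~$Y'$, $q$ is automatically a $\Str(\Rels)$-morphism $(Y,E')\to(Y',E')$, and $f'':=\Refl q\cdot f'$ gives the factorization directly --- no induction on derivations, no iterative enlargement of~$Y'$. This is exactly the content of the paper's \autoref{lem:edge-finite}. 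Your ``a fortiori'' at the end is also inverted: for $\kappa\le\lambda$, $\lambda$-directed diagrams are $\kappa$-directed, not conversely; but since $\Refl$ is a left adjoint it preserves all colimits, so this slip is harmless.
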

\begin{proof}
  \begin{enumerate}
  \item `If' is trivial; we prove `only if'. For ease of notation, we
    assume w.l.o.g.~that $|X|$ and~$Y$ are disjoint; then their union
    $|X|\cup Y$ serves as a coproduct $|X|+Y$. Fix a
    map~$s\colon |\Refl(Y,E)| \to Y$ splitting the reflection
    $r_{(Y,E)}\colon (Y,E)\to \Refl(Y,E)$; that is,
    $r_{(Y,E)} \cdot s = \id_Y$ ($s$ is not in general a morphism).%
    We define a set~$E_f$ of edges over $|X| + Y$ by
    \begin{equation*}\textstyle
      E_f=\bigcup_{x\in |X|}\Eq(x,s(f(x))).
    \end{equation*}
    We then have a morphism
    \begin{equation*}
      h = \big((Y,E)
      \xra{\inr}
      (|X| + Y, E_f \cup E)
      \xra{r_{(|X|+Y, E_f \cup E)}}
      \Refl(|X| + Y, E_f \cup E)\big).
    \end{equation*}
    Note that $r_{(|X| + Y,E_f \cup E)}$ merges every $x \in |X|$ with
    $e(f(x)) \in Y$, and indeed it is not difficult to see that
    $h^\sharp\colon\Refl(Y,E)\to\Refl(|X| + Y,E_f \cup E)$ is an
    isomorphism.  In addition, the map
    $r_{(|X| + Y,E_f \cup E)}\cdot \inl\colon X\to\Refl(|X| + Y,E_f
    \cup E)$ carries a morphism, where~$\inl$ is the left-hand
    coproduct injection\lsnote{This should be introduced globally}
    $|X| \to |X| + Y$; indeed,
    $r_{(|X| + Y,E_f \cup E)}\cdot \inl =h^\#\cdot f$. By assumption,
    we thus have $E'\subseteq_\kappa E_f \cup E$ and a morphism
    $l\colon X\to\Refl(|X|\cup Y,E')$ such that
    $\Refl j\cdot l=r_{(|X|+ Y,E_f\cup E)}\cdot \inl$ where
    $j\colon (|X| + Y,E')\to(|X| + Y,E_f \cup E)$ is the morphism
    carried by~$\id_{|X| + Y}$.

    Next, we define the map
    \begin{equation*}
      k = [s \cdot f, \id_Y]\colon |X| + Y \to Y.
    \end{equation*}
    Then we have a morphism
    \begin{equation*}
      p=r_{(Y,E'\cap E)}\cdot k\colon (|X| + Y,E')\to \Refl(Y,E'\cap E).
    \end{equation*}
    To see this, let $(\alpha,g)\in E'\subseteq E_f\cup E$. The case
    where $(\alpha,g)\in E$ is trivial; the other case is that
    $(\alpha,g)\in\Eq(x,s(f(x)))$ for some $x\in X$. Such an edge is
    preserved by~$p$ because $p(x)= r_{(Y,E'\cap E)} \cdot s \cdot f(x) =
    r_{(Y,E'\cap E)} (s(f(x))) = p(s(f(x))$.
    
    We thus have a diagram
    \begin{equation*}
      \begin{tikzcd}[column sep = large]
        &
        \Refl(|X| + Y,E')  \arrow[d,"\Refl j"]
        \arrow[dr,"p^\sharp" above right]
        \\
        X\arrow[ur,"l"]
        \arrow[r,"r\cdot \inl"]
        \arrow[dr,"f" below left]
        &
        \Refl(|X| + Y,E_f\cup E)
        \arrow[d,"(h^\sharp)^{-1}"]
        &
        \Refl(Y,E'\cap E)\arrow[dl,"\Refl m"]
        \\
        &
        \Refl(Y,E)
      \end{tikzcd}
    \end{equation*}
    where $m\colon (Y,E'\cap E)\to(Y,E)$ is the morphism carried
    by~$\id_Y$. We are done once we show that the diagram
    commutes. The upper left triangle commutes by construction of~$l$,
    and commutation of the lower left triangle has been noted above
    (in the equivalent form $h^\sharp\cdot r\cdot \inl=f$). Commutation
    of the right-hand square is equivalent to
    $h^\sharp\cdot\Refl m\cdot p^\sharp = \Refl j$. This holds 
    precisely when it holds when precomposed with the reflection
    $r_{(|X|+Y, E')}$; we compute as follows (in $\Set$):
    \begin{align*}
      h^\sharp\cdot\Refl m\cdot p^\sharp\cdot r_{(|X|+Y, E')} 
      &= h^\sharp \cdot \Refl m\cdot p
      & \text{univ.~prop.~of $r_{(|X|+Y, E')}$} \\
      &= h^\sharp \cdot \Refl m \cdot r_{(Y, E'\cap E)} \cdot k
      & \text{def.~of $p$}\\
      &= h^\sharp \cdot r_{(Y,E)} \cdot m \cdot k &
      \text{def.~of $\Refl$ on morphisms} \\
      &= h^\sharp \cdot r_{(Y,E)} \cdot k
      & \text{$m$ carried by identity} \\
      &= h \cdot k
      &\text{univ.~prop.~of $r_{(Y,E)}$}\\
      &= h \cdot [s \cdot f, \id_Y]
      &\text{def.~of $k$}\\
      &= r_{(|X|+Y,E_f + E)} \cdot \inr \cdot [s \cdot f, \id_Y]
      &\text{def.~of $h$}\\
      &\overset{(*)}{=} r_{(|X|+Y,E_f + E)} \cdot [\inl, \inr]\\
      &= r_{(|X|+Y,E_f + E)} \cdot j
      &\text{$j$ carried by identity}\\
      &= Rj \cdot r_{(|X|+Y, E')}
      &\text{def.~of $\Refl$ on morphisms}.
    \end{align*}
    For the step marked with $(*)$ we consider coproduct components
    separately: the right-hand component is clear, and for the
    left-hand one use that $r_{(|X| + Y, E_f \cup E)}$ merges $\inr \cdot
    s \cdot f$ and $\inl$ since $E_f \supseteq \Eq(x, s(f(x)))$.
    \takeout{
    We generally
    denote equivalence classes induced by a set~$E$ of edges
    under~$\RelAx$\lsnote{This should be introduced globally in the
      Horn section} by~$[-]_E$, and calculate explicitly,
    distinguishing cases on $|X|\cup Y$:%
    \smnote{This can be done without equivalence classes by
      precomposing the desired equation by $r_{(|X| + Y, E')}$ and
      then consider coproduct components, separately.}  
    For $x\in |X|$, we have
    \begin{multline*}
      h^\sharp\cdot\Refl m\cdot p^\sharp([x]_{E'})=
      h^\sharp\cdot\Refl m([e(f(x))]_{E'\cap E}) =
      h^\sharp([e(f(x))]_E) \\ =[e(f(x))]_{E_f\cup E} =
      [x]_{E_f\cup E} = \Refl j([x]_{E'})
    \end{multline*}
    using in the second-to-last step that $E_f\supseteq\Eq(x,e(f(x)))$. For
    $y\in Y$, we have
    \begin{equation*}
      h^\sharp\cdot\Refl m\cdot p^\sharp([y]_{E'})=
      h^\sharp\cdot\Refl m([y]_{E'\cap E})=
      h^\sharp([y]_{E}) =
      [y]_{E_f\cup E}=\Refl j([y]_{E'}).
    \end{equation*}}
  \item Immediate from the first claim and \autoref{lem:edge-finite}.\qedhere
  \end{enumerate}
\end{proof}

\begin{lem}\label{lem:edge-finite}
  Let $f\colon X\to\Refl(Y,E)$, and suppose that
  $\card|X|,\card E<\kappa$. Then there exist $Y'\subseteq_\kappa Y$
  such that all edges in~$E$ are edges over~$Y'$, and
  $f'\colon X\to\Refl(Y',E)$ such that $f=\Refl i\cdot f'$ where
  $i\colon(Y',E)\into(Y,E)$ is the morphism carried by the inclusion
  $Y'\into Y$.
\end{lem}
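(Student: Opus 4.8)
The plan is to take for $Y'$ a sufficiently small subset of $Y$ containing both the vertices occurring in the edges of~$E$ and a chosen preimage, under the reflection, of each value $f(x)$; then $f'$ is obtained by composing $f$ with the reflection of a canonical retraction $(Y,E)\to(Y',E)$ in $\Str(\Rels)$.

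Concretely, write $q:=r_{(Y,E)}\colon Y\to|\Refl(Y,E)|$ for the reflection map, which is surjective by \autoref{prop:refl}, and for each $x\in|X|$ choose $\bar x\in Y$ with $q(\bar x)=f(x)$. Let $V_E\subseteq Y$ be the set of all vertices occurring in edges of~$E$. Since $\card E<\kappa$, every relation symbol has finite arity, and $\kappa$ is regular, we get $\card V_E<\kappa$, so $Y':=V_E\cup\{\bar x\mid x\in|X|\}$ satisfies $\card Y'<\kappa$, i.e.\ $Y'\subseteq_\kappa Y$. As $V_E\subseteq Y'$, every edge of~$E$ is an edge over~$Y'$, so $(Y',E)\in\Str(\Rels)$ and the inclusion $Y'\into Y$ carries a morphism $i\colon(Y',E)\to(Y,E)$. (If $Y'=\emptyset$, then $E=\emptyset$ and $|X|=\emptyset$ because relation symbols have positive arity, so $X$ is the empty structure and the claim is trivial; we assume $Y'\neq\emptyset$ from now on.)

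We then pick any map $\rho\colon Y\to Y'$ that restricts to the identity on~$Y'$. Since $E$ consists of edges over~$Y'$ and $\rho$ fixes~$Y'$ pointwise, $\rho\cdot E=E$, so $\rho$ carries a morphism $\rho\colon(Y,E)\to(Y',E)$ in $\Str(\Rels)$ with $\rho\cdot i=\id_{(Y',E)}$; thus $(Y',E)$ is a retract of $(Y,E)$, which is the categorical incarnation of the fact that entailment under~$\Horn$ is stable under renaming the ambient vertex set. We set $f':=\Refl\rho\cdot f\colon X\to\Refl(Y',E)$, which is a morphism as a composite of morphisms. It remains to verify $f=\Refl i\cdot f'=\Refl(i\rho)\cdot f$, and for this we only need naturality of the reflection arrows: $\Refl(i\rho)\cdot q=q\cdot(i\rho)$. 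Since $i\rho\colon Y\to Y$ fixes~$Y'$ pointwise, for every $x\in|X|$ we obtain $\Refl(i\rho)(f(x))=\Refl(i\rho)(q(\bar x))=q\bigl((i\rho)(\bar x)\bigr)=q(\bar x)=f(x)$, using $\bar x\in Y'$; hence $\Refl i\cdot f'=f$, as desired.

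We do not expect a genuine obstacle. The two points that require a little care are the cardinal bookkeeping for~$Y'$ (handled by regularity of~$\kappa$) and the decision to reflect the retraction~$\rho$ rather than to build $f'$ directly as a morphism into $\Refl(Y',E)$: the latter route would require checking that the $\Horn$-derivations witnessing the edges of~$X$ under~$f$ can be replayed over~$Y'$, which does hold but is precisely what the retract description packages. Finally, as already flagged in \autoref{rem:compact} for the analogous factorisations, the resulting $f'$ need not equal $r_{(Y',E)}$ composed with the inclusion of~$|X|$; it may rename elements of~$X$, which is harmless here.
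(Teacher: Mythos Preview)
Your proof is correct and follows essentially the same approach as the paper: both construct $Y'$ as the union of the vertices appearing in~$E$ with chosen preimages (under the reflection map) of the values $f(x)$, choose a retraction of $(Y,E)$ onto $(Y',E)$, and define $f'$ as the reflection of that retraction composed with~$f$, verifying the factorisation via naturality of~$r$. Your handling of the empty case and your final verification are in fact slightly more direct than the paper's, but the argument is the same.
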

\begin{proof}
  Fix a splitting $s\colon |\Refl(Y,E)|\to Y$ of the reflection
  $r_{(Y,E)}$ ($s$ is not in general a morphism). Write $Y_E$ for the
  set of elements of~$Y$ that appear in edges in~$E$, and put
  \begin{equation*}
    Y'=e\cdot f[X]\cup Y_E;
  \end{equation*}
  if~$Y'$ is empty, then add an arbitrary element of~$Y$ to~$Y'$
  (if~$Y$ is also empty, then there is nothing to show).  Then
  $\card Y' < \kappa$, and since $Y'\neq\emptyset$, the inclusion map
  $i\colon Y'\into Y$ has a left inverse $q\colon Y \to Y'$, i.e.\
  $q \cdot i = \id_{Y'}$. 
  Since $s(f(x)) \in Y'$ for all $x \in |X|$, we have
  $i \cdot q \cdot s \cdot f = s \cdot f$. Moreover, $q$ is clearly a
  morphism $q\colon(Y,E)\to(Y',E)$. The arising morphism
  $f'=\Refl q\cdot f$ provides the claimed factorization of~$f$:
  \begin{align*}
    \Refl i \cdot \Refl q \cdot f
    &= \Refl(i \cdot q) \cdot f
    &\text{functoriality of $\Refl$}\\
    &= \Refl(i \cdot q) \cdot r_{(Y,E)} \cdot s \cdot f
    &\text{since $r_{(Y,E)} \cdot s = \id$}\\
    &= r_{(Y,E)} \cdot i \cdot q \cdot s \cdot f
    &\text{def.~of $\Refl$ on morphisms}\\
    &= r_{(Y,E)} \cdot s \cdot f
    &\text{since $e\cdot f = i\cdot q\cdot s \cdot f$}\\
    &= f
    &\text{since $r_{(Y,E)} \cdot s = \id$}. \tag*{\qedhere}
  \end{align*}
  \takeout{
  Indeed, writing~$[-]_E$ for equivalence classes induced
  by~$E$ under~$\RelAx$ both in $\Refl(Y,E)$ and in
  $\Refl(Y',E)$\lsnote{Again, introduce some global notation for
    this}, we have
  \begin{equation*}
    \Refl i\cdot \Refl q\cdot f(x) = \Refl(i\cdot q)([e\cdot f(x)]_E)=
    [i\cdot q\cdot e\cdot f(x)]_E = [e\cdot f(x)]_E=f(x)
  \end{equation*}
  for $x\in X$, using in the second-to-last step that
  $e\cdot f(x)\in Y'$.}
\end{proof}

\begin{proof}[Proof of~\autoref{P:fpchar}]
  \ref{P:fpchar-itm1}$\Rightarrow$\ref{P:fpchar-itm2}:
  We first prove that $\card|X| < \kappa$. Write $X$ as the
  $\kappa$-directed colimit of all its subobjects of cardinality smaller than
  $\kappa$. This is possible by our assumptions that arities of
  relation symbols in $\Rels$ are finite. 
  Since $X$ is weakly $\kappa$-presentable, we know that $\id_X$
  factorizes through one of the colimit injections, say $i\colon X'
  \to X$ for $X'$ with $\card|X'| < \kappa$. That is, for some
  morphism $h\colon X \to X'$ we have $i \cdot h = \id_X$, whence $X
  \cong X'$ has cardinality smaller than $\kappa$. 

  Second, we prove that $X$ is $\kappa$-compact.  Given a
  cover~$(Y,E)$ of~$X$, with inclusion $i\colon|X|\into Y$, write
  $(Y,E)$ as the $\kappa$-directed colimit in $\Str(\Rels)$ of all
  objects $(Y,E')$ such that $E'\subseteq_{\kappa}
  E$. 
  Since the reflector~$\Refl$ is a left adjoint, it preserves all
  colimits, so $\Refl (Y,E)$ is the $\kappa$-directed colimit of the
  $\Refl (Y,E')$. By weak $\kappa$-presentability of~$X$, the morphism
  $r_{(Y,E)}\cdot i\colon X\to\Refl(Y,E)$ factors through one of the
  colimit injections, as required.

  \ref{P:fpchar-itm2}$\Rightarrow$\ref{P:fpchar-itm1}: Let
  $(D_i\xrightarrow{c_i} D)_{i\in I}$ be a directed colimit of objects
  $D_i=(Y_i,E_i)$ in $\Str(\Rels, \RelAx)$, and let $f\colon X\to D$
  be a morphism. 
  Let $(D_i\xrightarrow{\bar c_i}(Y,E))_{i\in I}$ be the colimit of
  the~$D_i$ in $\Str(\Rels)$; then $D=\Refl(Y,E)$ and
  $c_i=r_{(Y,E)}\cdot \bar c_i=\Refl(\bar c_i)$ (assuming
  w.l.o.g.~that $\Refl(Y_i,E_i)=(Y_i,E_i)$). 
  By \autoref{lem:compact}, there exist $Y'\subseteq_\kappa Y$,
  $E'\subseteq_\kappa E$, and $f'\colon X\to\Refl(Y,E')$ such that
  $f=\Refl j\cdot f'$ where $j\colon (Y',E')\to(Y,E)$ is the morphism
  carried by the inclusion $Y'\into Y$. Now $(Y',E')$ is
  $\kappa$-presentable in $\Str(\Rels)$; so there are~$i$ and
  $j'\colon (Y',E')\to(Y_i,E_i)$ such that $j=\bar c_i\cdot j'$. This
  yields the desired factorization of~$f$: For $f''=\Refl j'\cdot f'$,
  we have
  \begin{equation*}
    c_i\cdot f''=\Refl\bar c_i\cdot \Refl j'\cdot f'
    = \Refl j\cdot f' = f.\tag*{\qedhere}
  \end{equation*}
\end{proof}

\section{Details for~\autoref{sec:algebras}}\label{sec:app-algebras}

We will now verify that the object assignment
$I\colon\AlgSigma\to\Alg H_{\Sigma}$ forms a concrete isomorphism
over~$\BC$. An element of $H_{\Sigma}X$ is a morphism
$f\colon \arity(\sigma)\to X$ in $\BC$ for some specified
$\sigma\in\Sigma$; we denote this by the pair $(\sigma, f)$.

Indeed, it is easy to
verify that the assignment $I\colon(A, \sigma_A)\mapsto (A, \alpha)$
which maps each $\Sigma$-algebra $A$ to the $H_{\Sigma}$-algebra
$\alpha\colon H_{\Sigma}A\to A$ defined by
\[
\alpha(f\colon\arity(\sigma)\to A):=\sigma_A(f)
\]
is an isomorphism, and $I$ preserves the 
$\BC$-object underlying each $\Sigma$-algebra
and the underlying morphism of each homomorphism
in $\AlgSigma$. That is, for the forgetful functors
\[
  U\colon\AlgSigma\to\BC_0
  \qquad\text{and}\qquad
  \overline{U}\colon\Alg H_{\Sigma}\to\BC_0,
\]
we have $U = I\cdot\overline{U}$.
\takeout{   
 \[
  \begin{tikzcd}[column sep = 10]
    \AlgSigma \arrow[rd, "U"'] \arrow[rr, "I"] &     
    & \Alg H_{\Sigma} \arrow[ld, "\overline{U}"] \\
    & \BC_0 &                                           
  \end{tikzcd}
\]
}
\begin{propn}\label{P:algebra-iso}
$\AlgSigma$ and $\Alg H_{\Sigma}$ are
isomorphic as concrete categories over 
$\BC$.
\end{propn}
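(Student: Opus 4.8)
The plan is to verify that the object assignment $I$ introduced above underlies an isomorphism of categories that commutes with the two forgetful functors $U\colon\AlgSigma\to\BC_0$ and $\overline U\colon\Alg H_\Sigma\to\BC_0$. The single fact powering the whole argument is the universal property of the coproduct: since $H_\Sigma A=\coprod_{\sigma\in\Sigma}[\arity(\sigma),A]$, to give a $\BC$-morphism $\alpha\colon H_\Sigma A\to A$ is precisely to give a family $(\sigma_A\colon[\arity(\sigma),A]\to A)_{\sigma\in\Sigma}$, namely the composites of $\alpha$ with the coproduct injections, and this correspondence is a bijection. Thus $I$ on objects and its proposed inverse $I^{-1}$ (sending $\alpha\colon H_\Sigma A\to A$ to the $\Sigma$-algebra with $\sigma_A$ the restriction of $\alpha$ to the $\sigma$-th summand) are mutually inverse bijections between the objects of $\AlgSigma$ and those of $\Alg H_\Sigma$, in both cases leaving the underlying $\BC$-object untouched.

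Next I would extend $I$ to morphisms by letting it act as the identity on underlying $\BC$-morphisms, and check that this is well defined, i.e.\ that a $\BC$-morphism $h\colon A\to B$ is a $\Sigma$-algebra homomorphism iff it is an $H_\Sigma$-algebra homomorphism $(A,\alpha)\to(B,\beta)$. The $H_\Sigma$-algebra condition $h\cdot\alpha=\beta\cdot H_\Sigma h$ is an equation of morphisms out of the coproduct $H_\Sigma A$; since $H_\Sigma h=\coprod_{\sigma\in\Sigma}[\arity(\sigma),h]$, precomposing with the $\sigma$-th injection turns it into $h\cdot\sigma_A=\sigma_B\cdot[\arity(\sigma),h]$, which --- writing $[\arity(\sigma),h]$ as the post-composition map $h\cdot(-)$ --- is exactly the commuting square in the definition of a $\Sigma$-algebra homomorphism. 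By the coproduct universal property the full equation holds iff all these restricted equations hold, which gives the desired equivalence. Functoriality of $I$ and of $I^{-1}$ (preservation of identities and composition) is then immediate, as neither alters underlying morphisms, and for the same reason the composites $I^{-1}\cdot I$ and $I\cdot I^{-1}$ are identity functors.

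Finally, since $I$ and $I^{-1}$ preserve underlying $\BC$-objects and underlying $\BC$-morphisms by construction, we have $U=\overline U\cdot I$ and $\overline U=U\cdot I^{-1}$, so the isomorphism is concrete over $\BC$, as claimed. I do not anticipate any genuine obstacle: the proof is a direct unfolding of definitions, and the only point requiring a little care is the bookkeeping around the coproduct decompositions of $H_\Sigma A$ and of $H_\Sigma h$ together with the identification of the internal-hom action $[\arity(\sigma),h]$ with the map $h\cdot(-)$.
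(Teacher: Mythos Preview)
Your proposal is correct and follows essentially the same approach as the paper: both arguments use that $H_\Sigma A$ is a coproduct of the $[\arity(\sigma),A]$, so that an $H_\Sigma$-algebra structure on~$A$ amounts to a family of morphisms $[\arity(\sigma),A]\to A$, and that the homomorphism condition decomposes summand-wise into the $\Sigma$-homomorphism squares. The only difference is cosmetic: the paper phrases the verification elementwise (writing points of $H_\Sigma A$ as pairs $(\sigma,f)$), whereas you invoke the universal property of the coproduct directly.
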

\begin{proof}
Every $H_{\Sigma}$-algebra $\alpha\colon H_{\Sigma}A\to A$ 
induces a $\Sigma$-algebra $A$, on the same carrier~$A$, with 
the operations $\sigma_A$~$(\sigma\in\Sigma)$ defined by
\[
  \sigma_{A}(f):= \alpha(\sigma, f)
  \qquad\qquad
  \text{for every $f\colon \arity(\sigma) \to A$}.
\]
Conversely, each $\Sigma$-algebra $A$ induces a 
$H_{\Sigma}$-algebra $\alpha_A\colon H_{\Sigma}A\to A$
defined by 
\[
\alpha(\sigma, f)=\sigma_A(f), \quad
f\in[\arity(\sigma), A].
\]
These constructions are mutually inverse, and a $\BC$-morphism
$h\colon A\to B$ is a homomorphism $A\to B$ of $\Sigma$-algebras iff
it is a homomorphism $(A, \alpha) \to (B,\beta)$ of the corresponding
$H_{\Sigma}$-algebras. We restrict ourselves to showing the latter.%
\smnote{Isn't this clear by composing the $H_\Sigma$-algebra
  homomorphism square with coproduct injections?}
To
this end, suppose first that $h\colon A\to B$ is a morphism of
$\Sigma$-algebras. Then $h\cdot\sigma_A(f)=\sigma_B(h\cdot f)$ for all
$\sigma\in\Sigma$ and all morphisms $f\colon\arity(\sigma)\to
A$. Thus, for all morphisms $f\colon\arity(\sigma)\to A$, we have
\[
  h\cdot\alpha(f)
  =
  h(\sigma_A(f))
  =
  \sigma_B(h\cdot f)
  =
  \beta(h\cdot f)
  =
  \beta\cdot H_{\Sigma}(h)(f).
\]
Conversely, if $h\colon A\to B$ is a morphism of $H_{\Sigma}$-algebras,
then $h\cdot\alpha= \beta\cdot H_{\Sigma}(h)$. To conclude
the proof, we compute
\[
  h\cdot\sigma_A(f)
  =
  h\cdot \alpha(f)
  =
  \beta\cdot H_{\Sigma}(h)(f)
  =
  \beta(h\cdot f)
  =
  \sigma_B(h\cdot f).
\] 
In short, we conclude that the assignment described above 
yields an isomorphism
\[
I\colon\AlgSigma\cong\Alg H_{\Sigma}
\]
which is moreover concrete over $\BC$: indeed, $I$ clearly 
preserves underlying sets of $\Sigma$-algebras, and it 
preserves the underlying maps of homomorphisms, as 
demonstrated above.
\end{proof}

\subparagraph*{Varieties of $\Sigma$-algebras}

\begin{lem}
  \label{L:homomorphism}
  Let $h\colon A\to B$ be a homomorphism of $\Sigma$-algebras $A,B$,
  and let $e\colon X\to A$ be a relation-preserving assignment. Then,
  for all terms $t\in\TSigma(X)$, we have:
\begin{enumerate}
\item\label{L:homomorphism-1}
$(h\cdot e)^{\#}(t)$ is defined whenever $e^{\#}(t)$ is defined,
and $(h\cdot e)^{\#}(t) = h(e^\#(t))$.
\item\label{L:homomorphism-2}
if $(h\cdot e)^\#(t)$ is defined and $h$ is an embedding, then 
$e^\#(t)$ is defined. 
\end{enumerate}
\end{lem}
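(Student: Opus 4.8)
The plan is to prove the two statements simultaneously by structural induction on the term $t\in\TSigma(X)$; a simultaneous induction is needed because establishing~\ref{L:homomorphism-2} for a term of the form $\sigma(f)$ requires~\ref{L:homomorphism-1} for its immediate subterms.

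The base case $t=x$ with $x\in X$ is immediate, since $e^{\#}(x)=e(x)$ and $(h\cdot e)^{\#}(x)=h(e(x))$ are always defined and visibly agree. For the inductive step $t=\sigma(f)$ with $f\colon|\arity(\sigma)|\to\TSigma(X)$, I would first treat~\ref{L:homomorphism-1}. Assuming $e^{\#}(\sigma(f))$ is defined, \autoref{D:evaluation} says that each $e^{\#}\cdot f(i)$ is defined and that $e^{\#}\cdot f$ preserves the edges of $\arity(\sigma)$, so that $e^{\#}\cdot f$ is a genuine $\BC$-morphism $\arity(\sigma)\to A$ on which $\sigma_A$ can act. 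The induction hypothesis then yields $(h\cdot e)^{\#}\cdot f=h\cdot(e^{\#}\cdot f)$, and since $h$ is relation-preserving the edge condition carries over to $B$, so $(h\cdot e)^{\#}(\sigma(f))$ is defined; finally, unfolding both sides through the defining commuting square of a $\Sigma$-algebra homomorphism (i.e.\ $h\cdot\sigma_A(k)=\sigma_B(h\cdot k)$) gives $(h\cdot e)^{\#}(\sigma(f))=h(e^{\#}(\sigma(f)))$. For~\ref{L:homomorphism-2}, with $h$ now an embedding and $(h\cdot e)^{\#}(\sigma(f))$ defined, the induction hypothesis for~\ref{L:homomorphism-2} makes every $e^{\#}(f(i))$ defined, whereupon~\ref{L:homomorphism-1} applied to the subterms gives $(h\cdot e)^{\#}\cdot f=h\cdot(e^{\#}\cdot f)$; the edge condition for $h\cdot e^{\#}\cdot f$ holds in $B$, and since an embedding is relation-reflecting it holds for $e^{\#}\cdot f$ in $A$, so by \autoref{D:evaluation} $e^{\#}(\sigma(f))$ is defined.

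I do not expect a genuine obstacle: this is a routine structural induction. The only places that call for a little care are the bookkeeping around definedness --- namely that ``$e^{\#}(\sigma(f))$ is defined'' encapsulates exactly the requirement that $e^{\#}\cdot f$ be relation-preserving, which is what makes $\sigma_A(e^{\#}\cdot f)$ legitimate --- and the need to invoke the correct half of ``embedding'', i.e.\ relation-preservation of $h$ for~\ref{L:homomorphism-1} and relation-reflection of $h$ for~\ref{L:homomorphism-2}.
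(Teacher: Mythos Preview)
Your proposal is correct and follows essentially the same approach as the paper: structural induction on~$t$, with the embedding case for~\ref{L:homomorphism-2} invoking~\ref{L:homomorphism-1} on the immediate subterms to rewrite $(h\cdot e)^{\#}\cdot f$ as $h\cdot(e^{\#}\cdot f)$ and then using relation-reflection. The only cosmetic difference is that the paper runs two separate inductions (proving~\ref{L:homomorphism-1} in full first and then appealing to it as a completed lemma inside the induction for~\ref{L:homomorphism-2}), whereas you do a simultaneous induction; both organizations work.
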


\begin{proof}[Proof of \autoref{L:homomorphism}]
\begin{enumerate}
\item
  We proceed by induction on $t\in\TSigma(X)$. In case $t$ is a
  variable in context $X$, the desired statement is immediate: indeed,
  $(h\cdot e)^{\#}(t)$ \emph{is} defined, and
  \[
    (h\cdot e)^{\#}(t)
    =
    h\cdot e(t)		
    =
    h(e(t))			
    =
    h(e^\#(t))		
  \]
  where we have only used the definition of $(-)^\#$. Now, suppose
  that $t$ has the shape $\sigma(f)$ for some operation
  $\sigma\in\Sigma$ and some $f\colon |\arity(\sigma)|\to\TSigma(X)$.
  If $e^\#(\sigma(f))$ is defined, then
  \begin{enumerate}
  \item  $e^\#\cdot f(i)$ is defined for all $i\in\arity(\sigma)$;
  \item  $\alpha_A(e^\#\cdot(f\cdot g))$ for all $\alpha(g)\in\Edge(\arity(\sigma))$.
  \end{enumerate}
  Then, applying the inductive hypothesis to the subterms $f(i)$ of
  $t$, we obtain that $(h\cdot e)^\#(f(i))$ is defined for all
  $i\in\arity(\sigma)$. Moreover, by (2), and since $h$ is a
  homomorphism, we see that $\alpha_B((h\cdot e)^\#\cdot f\cdot g)$
  for all edges $\alpha(g)\in\Edge(\arity(\sigma))$. Thus,
  $(h\cdot e)^\#(t)$ is defined. To conclude the proof, we compute
  \begin{align*}
    (h\cdot e)^\#(\sigma(f))
    &= \sigma_B((h\cdot e)^\#\cdot f)  & \text{definition $(-)^\#$} \\
    &= \sigma_B(h\cdot e^\#\cdot f) & \text{induction} \\
    &= h(\sigma_A(e^\#\cdot f))	 & \text{$h$ is a homomorphism} \\
    &= h(e^\#(\sigma(f)))		& \text{definition $(-)^\#$}\tag*{\qedhere}
  \end{align*}
  
\item
Now suppose that $h$ is an embedding; we proceed by 
induction as before. For the inductive step, suppose that 
$t\in\TSigma(X)$ is of the form $t=\sigma(f)$ for some 
$\sigma\in\Sigma$ and some map 
$f\colon\arity(\sigma)\to\TSigma(X)$, and assume that 
$(h\cdot e)^\#(\sigma(f))$ is defined. Then, by definition
of $(-)^\#$, we have that 
\begin{enumerate}
\item $(h\cdot e)^\#(f(i))$ is defined for all $i\in\arity(\sigma)$ 
and 
\item $B\models\alpha((h\cdot e)^\#\cdot(f\cdot g))$ for all
	  $\alpha(g)\in\Edge(\arity(\sigma))$.
\end{enumerate}
Then, by induction, we have that $e^\#(f(i))$ is defined
for all $i\in\arity(\sigma)$. Hence, by the first part of this
proposition, we see that $(h\cdot e)^\#(f(i)) = h\cdot e^\#(f(i))$ 
for all $i\in\arity(\sigma)$. In particular, given an edge
$\alpha(g)\in\Edge(\arity(\sigma))$, we have that
$B\models\alpha(h\cdot e^\#\cdot (f\cdot g))$. Since $h$ 
is relation reflecting, it follows that 
$A\models\alpha(e^\#\cdot (f\cdot g))$ as well. Thus,
$e^\#(t)$ is defined, as desired.
\end{enumerate}
\end{proof}

\begin{proof}[Proof of \autoref{P:creation}]
  Immediate from \autoref{L:colimit-model} below.
\end{proof}
\begin{lem}\label{L:colimit-model}
  Let $(D_i\xrightarrow{d_i} A)_{i\in I}$ be a $\kappa$-directed
  colimit in $\AlgSigma$, and let $X\vdash\alpha(f)$ be a
  $\Sigma$-relation. If each $D_i$ satisfies $X\vdash\alpha(f)$, 
  then $D$ satisfies $X\vdash\alpha(f)$.
\end{lem}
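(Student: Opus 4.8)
The plan is to lift a given assignment into the colimit back along the colimit cocone to one of the $D_i$, using that the context~$X$ is internally $\kappa$-presentable, and then to transport satisfaction from~$D_i$ to~$D$. Concretely: by \autoref{rem:alg-sigma-props}\ref{item:alg-sigma-accessble} the forgetful functor $\AlgSigma\to\BC_0$ creates $\kappa$-directed colimits, so, writing $(d_i\colon D_i\to D)_{i\in I}$ for the colimit cocone in $\AlgSigma$, the underlying $\BC$-object of~$D$ is the $\kappa$-directed colimit $\colim_i D_i$ in~$\BC$ along the~$d_i$. Fix a relation-preserving assignment $e\colon X\to D$; we must show that $e^{\#}(f(k))$ is defined for every $k\in\arity(\alpha)$ and that $\alpha_D(e^{\#}\cdot f)$.

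The crucial step is to factor~$e$ through some~$d_i$. Since $X\in\pres_\kappa$ is internally $\kappa$-presentable, $[X,-]$ is $\kappa$-accessible, hence the canonical comparison $\colim_i[X,D_i]\to[X,D]$ is an isomorphism in~$\BC$. Colimits in $\BC=\Str(\Horn)$ are obtained by forming the colimit in $\Str(\Rels)$, which is computed on underlying sets, and then applying the reflector~$\Refl$, whose units are surjective on carriers (\autoref{prop:refl}); consequently the carrier of $[X,D]$ is a quotient of $\colim_i|[X,D_i]|$, so that the canonical map $\colim_i\BC_0(X,D_i)\to\BC_0(X,D)$, namely post-composition with the~$d_i$, is surjective. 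Therefore there exist $i\in I$ and a relation-preserving assignment $e_i\colon X\to D_i$ with $d_i\cdot e_i=e$.

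Since, by hypothesis, $D_i$ satisfies $X\vdash\alpha(f)$, the value $e_i^{\#}(f(k))$ is defined for all $k\in\arity(\alpha)$ and $\alpha_{D_i}(e_i^{\#}\cdot f)$. Applying \autoref{L:homomorphism}\ref{L:homomorphism-1} to the $\Sigma$-homomorphism $d_i\colon D_i\to D$ and the assignment~$e_i$ gives, for every~$k$, that $e^{\#}(f(k))=(d_i\cdot e_i)^{\#}(f(k))$ is defined and equals $d_i\big(e_i^{\#}(f(k))\big)$; hence $e^{\#}\cdot f = d_i\cdot(e_i^{\#}\cdot f)$ as maps $\arity(\alpha)\to|D|$. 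As~$d_i$ is relation-preserving and $\alpha(e_i^{\#}\cdot f)\in\Edge(D_i)$, we obtain $\alpha(e^{\#}\cdot f)\in\Edge(D)$, i.e.\ $\alpha_D(e^{\#}\cdot f)$, as required.

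The one genuinely delicate point is the factorization of~$e$ through some~$d_i$: internally $\kappa$-presentable objects need not be externally $\kappa$-presentable (e.g.\ a two-element discrete metric space), so one cannot simply invoke external $\kappa$-presentability. The resolution is that we only need \emph{existence} of a factorization, not its essential uniqueness, and this existence is exactly the surjectivity statement extracted above from $\kappa$-accessibility of $[X,-]$ combined with surjectivity of the reflection maps. Everything else is routine bookkeeping with the inductively defined (partial) evaluation map, already packaged in \autoref{L:homomorphism}.
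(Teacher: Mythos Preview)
Your proof is correct and takes the same approach as the paper's own proof: factor the given assignment through some $d_i$ using presentability of the context, then transport satisfaction along the homomorphism $d_i$ via \autoref{L:homomorphism}. The only difference is that you justify the factorization step more carefully---observing that $X\in\pres_\kappa$ only guarantees \emph{internal} $\kappa$-presentability, and extracting the needed surjectivity of $\colim_i\BC_0(X,D_i)\to\BC_0(X,D)$ from $\kappa$-accessibility of $[X,-]$ combined with epi-reflectivity of $\Str(\Horn)$ in $\Str(\Rels)$---whereas the paper simply writes ``since $X$ is $\kappa$-presentable''; your extra care is warranted in view of \autoref{expl:internal-fp}.
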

\begin{proof}
  Suppose that every $D_i$ satisfies $X\vdash\alpha(f)$ and let
  $g\colon X\to A$ be a relation preserving assignment.  Using that
  $(D_i\xrightarrow{d_i} A)_{i\in I}$ is a $\kappa$-directed colimit,
  it follows, since $X$ is $\kappa$-presentable, that there exists
  $i\in I$ and a relation preserving map $\bar{g}\colon X\to D_i$ such
  that $g = d_i\cdot\bar{g}$.  Since $D_i$ satisfies
  $\Gamma\vdash\alpha(f)$, we have:
  \begin{enumerate}
  \item\label{L:colimit-model-1}
    $(\bar{g})^\#\cdot f(i)$ is defined for all $i\in\arity(\alpha)$;
  \item\label{L:colimit-model-2}
    $D_i\models\alpha((\bar{g})^\#\cdot f)$. 
  \end{enumerate}
  Applying~\autoref{L:homomorphism} to the homomorphism
  $d_i\colon D_i\to A$ and the assignment $(\bar{g})\colon X\to D_i$,
  we see that
  \[
    g^\#(f(i)) = (d_i\cdot\bar{g})^\#(f(i)) = d_i\cdot (\bar{g})^\#(f(i))
  \]
  is defined for all $i\in\arity(\alpha)$ by \ref{L:colimit-model-1}.
  Furthermore, using that $d_i$ is relation preserving, it follows
  from \ref{L:colimit-model-2} that
  $D\models\alpha(d_i\cdot(\bar{g})^\#\cdot f)$. To conclude the
  proof, we use that
  $d_i\cdot(\bar{g})^\# = (d_i\cdot\bar g)^\# = g^\#$ to see that
  $D\models\alpha(g^\#\cdot f)$. Hence $D$ satisfies
  $X\vdash\alpha(f),$ as desired.
\end{proof}

\begin{lem}\label{L:product-lemma}
Let $A=\prod_{i\in I} A_i$ be a product of algebras such
that each $A_i$ is an algebra in the variety $\V$. Then,
for all relation preserving assignments $f\colon\Gamma\to A$ 
and all terms $t\in\TSigma(\Gamma)$:  $f^\#(t)$ is defined 
iff $(\pi_i\cdot f)^\#(t)$ is defined for all $i\in I$.
\end{lem}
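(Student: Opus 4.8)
The plan is to prove the statement by induction on the structure of the term $t\in\TSigma(\Gamma)$, exploiting two facts about the product $A=\prod_{i\in I}A_i$. First, since the forgetful functor $\AlgSigma\to\BC_0$ creates limits (\autoref{rem:alg-sigma-props}\ref{item:alg-sigma-limits}) and $\BC=\Str(\Horn)$ is closed under limits in $\Str(\Rels)$ (\autoref{prop:refl} and the ensuing remark), the underlying $\BC$-object of $A$ is the product in $\Str(\Rels)$: its carrier is $\prod_{i\in I}|A_i|$ and, for any $\Rels$-edge $\alpha(g)$, we have $A\models\alpha(g)$ if and only if $A_i\models\alpha(\pi_i\cdot g)$ for every $i\in I$. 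Second, each projection $\pi_i\colon A\to A_i$ is a homomorphism of $\Sigma$-algebras, so \autoref{L:homomorphism}\ref{L:homomorphism-1} applies to it.

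For the base case, if $t=x$ is a variable in $\Gamma$, then $f^{\#}(x)=f(x)$ and $(\pi_i\cdot f)^{\#}(x)=\pi_i(f(x))$ are all defined, so both sides of the equivalence hold trivially. For the inductive step, let $t=\sigma(p)$ with $\sigma\in\Sigma$ and $p\colon|\arity(\sigma)|\to\TSigma(\Gamma)$. The ``only if'' direction is then immediate: if $f^{\#}(\sigma(p))$ is defined, then applying \autoref{L:homomorphism}\ref{L:homomorphism-1} to each homomorphism $\pi_i\colon A\to A_i$ and the assignment $f$ shows that each $(\pi_i\cdot f)^{\#}(\sigma(p))$ is defined. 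For the ``if'' direction, assume $(\pi_i\cdot f)^{\#}(\sigma(p))$ is defined for all $i$. Then for every $j\in\arity(\sigma)$ the subterm evaluation $(\pi_i\cdot f)^{\#}(p(j))$ is defined for all $i$, so the induction hypothesis for the subterm $p(j)$ yields that $f^{\#}(p(j))$ is defined. It remains to verify the edge condition of \autoref{D:evaluation}: for every $\Rels$-edge $\alpha(q)$ in $\arity(\sigma)$ we must show $A\models\alpha(f^{\#}\cdot(p\cdot q))$. By the product characterization of edges recalled above, this reduces to $A_i\models\alpha(\pi_i\cdot f^{\#}\cdot(p\cdot q))$ for all $i$; and since the subterms $p(q(k))$ are already known to be defined under $f^{\#}$, \autoref{L:homomorphism}\ref{L:homomorphism-1} gives $\pi_i\cdot f^{\#}\cdot(p\cdot q)=(\pi_i\cdot f)^{\#}\cdot(p\cdot q)$, so the required relation $A_i\models\alpha((\pi_i\cdot f)^{\#}\cdot(p\cdot q))$ is precisely part of the hypothesis that $(\pi_i\cdot f)^{\#}(\sigma(p))$ is defined. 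Together with the definedness of all $f^{\#}(p(j))$, this shows $f^{\#}(\sigma(p))$ is defined, closing the induction.

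I would note in passing that for the definedness equivalence as stated one only needs each $A_i$ to be a $\Sigma$-algebra and $A$ their product in $\AlgSigma$; the hypothesis $A_i\in\V$ is what will later guarantee that $A$ itself lies in $\V$, via this lemma used to check that $A$ satisfies each $\Sigma$-relation in $\E$. The only delicate point is bookkeeping: one must track carefully which subterm evaluations are already known to be defined before invoking \autoref{L:homomorphism}\ref{L:homomorphism-1} to commute evaluation past a projection, and one must use the precise description of products in $\Str(\Horn)$ — an $\Rels$-edge holds in the product exactly when all of its projections hold in the respective components — rather than any vaguer ``componentwise'' intuition (note in particular that the projections $\pi_i$ are not embeddings, so \autoref{L:homomorphism}\ref{L:homomorphism-2} is not directly applicable and the product structure must do that work instead). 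Neither difficulty is serious, so I expect the proof to be essentially the routine induction sketched above.
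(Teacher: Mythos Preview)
Your proof is correct and follows the same inductive strategy as the paper's own argument; in fact your write-up is considerably more complete, spelling out both directions of the equivalence and the use of the product description of edges, where the paper's proof is rather terse. Your side remark is also accurate: the hypothesis $A_i\in\V$ plays no role in the definedness equivalence itself and is only needed afterwards, in the proof of \autoref{P:varieties}, to conclude that the product lies in~$\V$.
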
 
\begin{proof}
To this end, we proceed by induction on 
$t\in\TSigma(\Gamma)$. If $t$ is a variable in context
$\Gamma$, then there is nothing to show. Suppose that
$t=\sigma(g)$ for some $\sigma\in\Sigma$ and some 
map $g\colon\arity(\sigma)\to\TSigma(\Gamma)$. Then,
for all $i\in I$, we have that $(\pi_i\cdot f)^\#(\sigma(g))$ is 
defined and $A_i\models\alpha((\pi_i\cdot f)^\#\cdot g)$
since each $A_i$ is an algebra in $\V$. 
\end{proof}

\begin{proof}[Proof of \autoref{P:varieties}]
It suffices to show that $\V$ is closed under
subalgebras and products in $\AlgSigma$; this
implies that $\V$ is closed under limits in $\AlgSigma$.
Since $\V$ is closed under $\kappa$-directed colimits in 
$\AlgSigma$ by~\autoref{P:creation}, we may then
apply~\autoref{T:reflection} to see that $\V$ is a reflective 
subcategory of $\AlgSigma$, as required.

We first show that $\V$ is closed under subalgebras. 
Given a subalgebra $h\colon B\hookrightarrow A$ of
an algebra $A$ in $\V$ and an axiom $\Gamma\vdash\alpha(f)$,
we verify that $B$ satisfies this axiom. To this end, let 
$e\colon\Gamma\to B$ be a relation preserving assignment. 
Using that $A$ is an algebra in $\V$, it follows that 
$(h\cdot e)^\#(f(i))$ is defined for all $i\in\Gamma$, and 
$A\models\alpha((h\cdot e)^\#\cdot f))$. Since $h$ is an
embedding, we may apply~\autoref{L:homomorphism} to 
see that $e^\#(f(i))$ is defined for all $i\in\arity(\sigma)$
hence also $(h\cdot e)^\#=h\cdot e^\#$. Using the latter, we
see that $A\models\alpha(h\cdot e^\#\cdot f)$ whence
$B\models\alpha(e^\#\cdot f)$ since $h$ is relation reflecting.
It now follows that $B$ satisfies $\Gamma\vdash\alpha(f)$,
as desired.

To conclude, we now show that $\V$ is closed under products.  Let
$A=\prod_{i\in I} A_i$ be a product of algebras in $\V$ with
projections $\pi_i\colon A\to A_i$; we are going to verify that $A$
lies in~$\V$. To this end, let $\Gamma\vdash\alpha(e)$ be an axiom of
$\V$ and let $f\colon\Gamma\to A$ be a relation preserving map.  Since
each $A_i$ is an algebra in $\V$, we have, for all $i\in I$, that
$(\pi_i\cdot f)^\#(e(j))$ is defined for all $j\in\arity(\alpha)$, and
$A_i\models\alpha((\pi_i\cdot f)^\#\cdot e)$.  By
\autoref{L:homomorphism}, we have that
$(\pi_i\cdot f)^\#=\pi_i\cdot f^\#$ since each $\pi_i$ is a
homomorphism. By \autoref{L:product-lemma}, $f^\#(e(j))$ is defined
for all $j\in\arity(\alpha)$. Then, since the $\pi_i$ are jointly
relation reflecting, we have that $A\models\alpha(f^\#\cdot e)$.
Hence $A$ satisfies $\Gamma\vdash\alpha(f)$, as desired.
\end{proof}

\begin{proof}[Proof of \autoref{T:monadic}]
  We are going use Beck's Monadicity Theorem \cite[Thm.~IV.7.1]{MacLane98}. We have seen
  in~\autoref{R:free-monad} that $V\colon\V\to\BC$
  has a left adjoint.
  %
  %
  Thus, it suffices to show that
  $V\colon\V\to\BC$ creates coequalizers of $V$-split pairs. 
  Let $f,g\colon A\to B$ be a $V$-split pair of
  homomorphisms in $\V$. That is, there exist relation preserving
  maps $c, i, j$ as depicted in the commutative diagram below
  \[
    \begin{tikzcd}
      V\!A
      \arrow[r, "V\!f", shift left]
      \arrow[r, "Vg"', shift right]
      &
      V\!B
      \arrow[l, "j", bend left=49]
      \arrow[r, "c"]
      &
      C \arrow[l, "i", bend left=49]
    \end{tikzcd}
    \qquad \text{such that}\qquad
    \begin{aligned}[t]
      c\cdot Vf &= c\cdot Vg, &  Vf\cdot j &=\id_{V\!B}, \\
      c\cdot i & = \id_C, & Vg\cdot j &=i\cdot c.
    \end{aligned}
  \]
  (This holds in $\BC$, and the equations easily imply that $c$ is an
  \emph{absolute} coequalizer of $Vf$ and
  $Vg$~\cite[Sec.~VI.6]{MacLane98}, i.e.~a coequalizer which is
  preserved by every functor; subsequently we will omit writing $V$.)
  We first show that $C$ carries the structure of a $\Sigma$-algebra:
  indeed, for each $\sigma\in\Sigma$, we define
  $\sigma_C\colon[\arity(\sigma), C]\to C$ by
  \[
    \sigma_C(m):= c\cdot\sigma_B(i\cdot m).
  \]
  Then $\sigma_C(f)$ is defined for all $f\in[\arity(\sigma), A]$
  since $i\cdot f\in[\arity(\sigma), B]$, and $\sigma_C$ is relation
  preserving because both $c$ and $\sigma_B$ are relation preserving.
  Hence $(C, (\sigma_C)_{\sigma \in \Sigma})$ defines a $\Sigma$-algebra, as claimed. We
  next show that $c\colon B\to C$ is a homomorphism: given a
  $\Gamma$-ary operation $\sigma\in\Sigma$ and a relation preserving
  map $m\colon\Gamma\to B$, we have
  \begin{align*}
    c\cdot \sigma_B(m)
    &= c\cdot\sigma_B((f\cdot j)\cdot m)
    &f\cdot j=\id_B \\
    &= c\cdot f\cdot\sigma_A(j\cdot m)
    &f \text{ a homomorphism} \\
    &= c\cdot g\cdot\sigma_A(j\cdot m)
    &c\cdot f=c \cdot g \\
    &= c\cdot\sigma_B(g\cdot j\cdot m)
    &g \text{ a homomorphism} \\
    &= c\cdot\sigma_B(i\cdot c\cdot m)
    &g\cdot j=i\cdot c \\
    &= \sigma_C(c\cdot m)
    &\text{definition of }\sigma_C
  \end{align*}
  Thus, $c$ is a homomorphism, as claimed. Moreover, the operations
  $\sigma_C$ are the unique morphisms $[\arity(\sigma), C]\to C$
  making $c$ a homomorphism.  Indeed, if $c$ is a homomorphism and
  $m\colon\arity(\sigma)\to C$ is a relation preserving map, then
  \[
    \sigma_C(m)
    =
    \sigma_C(c\cdot i\cdot m)
    =
    c\cdot\sigma_B(i\cdot m),
  \]
  where we use that $c\cdot i=\id_B$ in the first equality.

  We next show that $C$ is an algebra in $\V$. Indeed, we will prove
  that $C$ satisfies every $\Sigma$-relation satisfied by $B$; the
  claim will then follows because $B$ is an algebra in $\V$.  To this
  end, suppose that $B$ satisfies $\Gamma\vdash\alpha(e)$ and let
  $m\colon\Gamma\to C$ be a relation preserving map.  Then, the map
  $i\cdot m\colon\Gamma\to B$ is relation preserving whence, for all
  $l\in\arity(\alpha)$, $(i\cdot m)^\#(e(l))$ is defined and
  $B\models\alpha((i\cdot m)^\#\cdot e)$. Then, since $c$ is a
  homomorphism, we know by~\autoref{L:homomorphism} that, for all
  $l\in\arity(\alpha)$, $(c\cdot i\cdot m)^\#(e(l))$ is defined hence
  also $m^\#(e(l))$ is defined since $c\cdot i=\id$. Finally,
  $A\models\alpha(m\cdot e)$. Indeed, we have
  \[
    m^\#
    =
    (\id\cdot m)^\#
    =
    (c\cdot i\cdot m)^\# 
    =
    c\cdot (i\cdot m)^\#,
  \]
  and $A\models\alpha(c\cdot(i\cdot m)^\#\cdot e)$ because
  $B\models\alpha((i\cdot m)^\#)$ and $c$ is relation preserving.

  We have now seen that there is a unique $\Sigma$-algebra structure
  on $C$ making $c$ a homomorphism, and $C$ is an algebra in $\V$.
  Thus, in order to conclude the proof, it suffices to show that $c$
  is a coequalizer of $f$ and $g$ in $\V$. We already know that $c$ is
  a coequalizer in~$\BC$. Given a homomorphism $d\colon B \to D$ such
  that $d \cdot f = d \cdot g$ we therefore obtain a unique morphism
  $h\colon C \to D$ in $\BC$ such that $h \cdot c = d$. To complete
  the proof we need to show that $h$ is a homomorphism. To see this
  consider the following diagrams, for every operation symbol $\sigma$:
  \[
    \begin{tikzcd}[column sep = 40]
      {[\arity(\sigma), B]}
      \ar{r}{\sigma_B}
      \ar{d}[swap]{c \cdot (-)}
      \ar[shiftarr = {xshift=-35}]{dd}[swap]{d \cdot (-)}
      &
      B
      \ar{d}{c}
      \ar[shiftarr = {xshift=20}]{dd}{d}
      \\
      {[\arity(\sigma), C]}
      \ar{r}{\sigma_C}
      \ar{d}[swap]{h \cdot (-)}
      &
      C
      \ar{d}{h}
      \\
      {[\arity(\sigma), D]}
      \ar{r}{\sigma_D}
      &
      D
    \end{tikzcd}
  \]
  The left-hand and right-hand parts clearly commutes, and the upper
  square and outside do since $c$ and $d$ are homomorphisms. Thus, the
  desired lower square commutes when precomposed by $c\cdot (-)$. The
  latter morphism is epimorphic since it is a coequalizer being the
  image of the absolute coequalizer $c$ under the internal
  hom-functor $[\arity(\sigma),-]$. Hence, the desired
  square commutes.%
  \takeout{
  Towards this goal, let
  $d\colon B\to D$ be a homomorphism such that $d\cdot f= d\cdot g$;
  we must show that there is a unique homomorphism $d'\colon C\to D$
  such that $d'\cdot c=d$.  Indeed, consider $d':=d\cdot i$. Then
  \[
    d'\cdot c
    =
    d\cdot i\cdot c
    =
    d\cdot g \cdot j
    =
    d\cdot f \cdot j
    =
    d,
  \]
  as required. As for uniqueness, note that if $d'\cdot c = d$, then 
  \begin{align*}
    d'
    &= d\cdot i\cdot c
    &\text{definition of $d'$}
    \\
    &= d\cdot g\cdot j
    &i\cdot c=g\cdot j
    \\
    &= d\cdot f\cdot j
    & d\cdot f=d\cdot g
    \\
    &= d
    &f\cdot j=\id_B
  \end{align*}
  Finally, $d'\colon C\to D$ is a homomorphism because for all
  $\sigma\in\Sigma$ and all $m\colon\arity(\sigma)\to C$ we have
  \begin{align*}
    d'(\sigma_C(m))
    &= d'\cdot c(\sigma_B(i\cdot m))
    &\text{definition of $\sigma_C$}
    \\
    &= d(\sigma_B(i\cdot m))
    & d'\cdot c= d
    \\
    &= \sigma_D(d\cdot i\cdot m)
    & \text{$d$ is a homomorphism}
    \\
    &= \sigma_D(d'\cdot m)
    &\text{definition of $d'$}.
  \end{align*}
  We conclude that $c$ is a coequalizer in $\V$, as desired.}
\end{proof}

\begin{proof}[Proof of~\autoref{C:monadic}]
We know that the assignment $\V\mapsto\bbT_{\V}$ of a 
$\kappa$-ary relational algebraic theory to its free-algebra 
monad $\bbT$ preserves categories of models by
\autoref{T:monadic}, and $\bbT_{\V}$ is $\kappa$-accessible, as
discussed in~\autoref{R:free-monad}. To conclude, we must
show that $\bbT_{\V}$ is enriched. That is, given $\alpha\in\Rels$ 
and $(f_i\colon X\to Y)_{i\in\arity(\alpha)}$ such that 
$[X, Y]\models\alpha(f_i)$, we must show that
$[T_{\V}X, T_{\V}Y]\models\alpha(T_{\V}f_i)$.

To this end, let $\alpha\in\Rels$ and $f_i\colon X\to Y$ be as
above, and let $e\colon E\hookrightarrow T_{\V}X$ be the 
substructure of $\T_{\V}X$ given by all $t\in T_{\V}X$ such that 
$T_{\V}Y\models\alpha(T_{\V} f_i(t))$. We first show that 
$\eta_X(x)\in E$ for all $x\in X$. We know that
$[X, Y]\models\alpha(f_i)$, and this means that $Y\models\alpha(f_i(x))$
for all $x\in X$. Since $\eta_Y\colon Y\to T_{\V}Y$ is relation
preserving, it follows that $T_{\V}Y\models\alpha(\eta_Y\cdot f_i(x))$.
By naturality of $\eta$, the square below is commutative
\smnote{I think that natural transformations are mostly written horizontally.}          
\[
  \begin{tikzcd}
    X \ar{r}{\eta_X}
    \ar{d}[swap]{f_i}
    &
    T_\V X
    \ar{d}{T_\V f_i}
    \\
    Y
    \ar{r}{\eta_Y}
    &
    T_\V Y
  \end{tikzcd} 
\]
whence $T_{\V}Y\models\alpha( T_{\V} f_i(\eta_X(x)))$. That is,
$\eta_X(x)\in E$, as claimed.

Furthermore, $E$ is closed under the operations of the algebra
$T_{\V}X$ because each operation
$\sigma_{T_{\V}X}\colon [\arity(\sigma), T_{\V}X]\to T_{\V}X$ is
relation preserving. Indeed, let $\sigma\in\Sigma$ and let
$h\colon\arity(\sigma)\to T_{\V}X$ be a relation preserving map such
that $h[\arity(\sigma)]\subseteq E$; we are going to verify that
$\sigma_{T_{\V}X}(h)\in E$. Note that we have the commutative square
\[
  \begin{tikzcd}[column sep = 40]
    {[\arity(\sigma), T_{\V}X]}
    \arrow[r, "\sigma_{T_{\V}X}"]
    \arrow[d, "T_{\V}f_i\cdot(-)"']
    &
    T_{\V}X \arrow[d, "T_{\V}f_i"]
    \\
    {[\arity(\sigma), T_{\V}Y]}
    \arrow[r, "\sigma_{T_{\V}Y}"]
    & T_{\V}Y                       
  \end{tikzcd}
\]
Thus, for the map $h\colon\arity(\sigma)\to T_{\V}X$, we have 
that
\[
  T_{\V}f_i(\sigma_{T_{\V}X}(h)) = \sigma_Y(T_{\V}f_i\cdot h)
  \qquad
  \text{for every $i\in\arity(\alpha)$}.
\]
Now, using that $h[\arity(\sigma)]\subseteq E$, it follows that 
$T_{\V}Y\models \alpha(T_{\V}f_i(h(j)))$ for 
all $j\in\arity(\sigma)$. Thus, by definition of 
$\Edge([\arity(\sigma), T_{\V}Y])$, we have that 
$[\arity(\sigma), T_{V}Y]\models\alpha(T_{\V}f_i\cdot h)$. This
implies that $T_{\V}Y\models\alpha(\sigma_{T_{\V}Y}(T_{\V}f_i\cdot h))$
since $\sigma_{T_{\V}Y}$ is relation preserving. Applying the 
commutative square above, it follows that 
$T_{\V}Y\models\alpha(T_{\V}f_i\cdot\sigma_{T_{\V}X}(h))$
hence also $\sigma_{T_{\V}X}(h)\in E$, as claimed.

We conclude that $E$ is a $\Sigma$-subalgebra of $T_{\V}X$
containing $\eta_X[X]$. Since $T_{\V}X$ is a free algebra of $\V$
with the universal morphism $\eta_X$, it follows that $E=T_{\V}X$. In
particular, $T_\V Y \models \alpha(T_{\V}f_i(t))$ holds for all $t \in
T_\V X$, whence $\alpha(T_\V f_i)$ holds in $[T_{\V}X, T_{\V}Y]$, as
desired. 
\end{proof}

\subparagraph*{Relational Logic} 

\begin{proof}[Proof of~\autoref{L:admissible}]
\textbf{Admissibility of ($\mathsf{Arity}$)}:
Admissibility of the rule $(\mathsf{Arity})$ is immediate: in any derivation
of $X\vdash\isdef\sigma(m)$, the last rule applied was $(\EAr)$. We may
apply this rule just in case we have derivations of all desired relational
judgements $X\vdash\alpha(f)$.

\smallskip\noindent \textbf{Admissibility of ($\mathsf{Subterm}$)}: We
proceed to show that whenever $X\vdash\alpha(f)$ is derivable, where
$f\colon\arity(\alpha)\to\TSigma(X)$, then $X\vdash\isdef u$ is
derivable for all $u\in\sub(f)$; we do so by making a case distinction
on the basis of whether $u\in\sub(f)$ is a variable (in $X$) or
$u=\sigma(m)$ for some $\sigma\in\Sigma$ and some map
$m\colon|\arity(\sigma)|\to\TSigma(X)$. If $u$ is a variable, then
$X\vdash\isdef u$ is derivable via $(\mathsf{Var})$. So let us suppose
that $u=\sigma(m)$. Then, by application of $(\mathsf{Arity})$ (which
was shown to be admissible above), we have a derivation of
$X\vdash\beta(m\cdot g)$ for all edges $\beta(g)$ such that
$\arity(\sigma)\models\beta(g)$. Thus, by application of $(\EAr)$, we
have a derivation of $X\vdash\isdef\sigma(m)$, as desired.
\end{proof}

\begin{cor}\label{C:admissible}
Let $\sigma\in\Sigma$ and let $m\colon|\arity(\sigma)|\to\TSigma(\Gamma)$. 
Then $\Gamma\vdash\isdef\sigma(m)$ iff $\Gamma\vdash\alpha(m\cdot f)$
is derivable for all edges $\alpha(f)$ such that $\arity(\sigma)\models\alpha(f)$.
\end{cor}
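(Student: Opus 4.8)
The plan is to prove the biconditional in \autoref{C:admissible} by combining \autoref{L:admissible} with a direct analysis of the final rule in a derivation of $\Gamma\vdash\isdef\sigma(m)$. For the `only if' direction, suppose $\Gamma\vdash\isdef\sigma(m)$ is derivable. The only rule in relational logic whose conclusion is a definedness judgement of the form $X\vdash\isdef\sigma(f)$ — with $\sigma$ an operation symbol and $f\colon\arity(\sigma)\to\TSigma(X)$ — is $(\EAr)$ (the rule $(\mathsf{Var})$ only concludes $X\vdash\isdef x$ for a variable $x$). Hence any derivation of $\Gamma\vdash\isdef\sigma(m)$ must end with an application of $(\EAr)$, whose premisses include, in particular, $\Gamma\vdash\alpha(m\cdot g)$ for every edge $\alpha(g)$ with $\arity(\sigma)\models\alpha(g)$. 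These premisses are therefore derivable, which is exactly the claimed conclusion.

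For the `if' direction, assume that $\Gamma\vdash\alpha(m\cdot f)$ is derivable for every edge $\alpha(f)$ such that $\arity(\sigma)\models\alpha(f)$. We additionally need derivations of the definedness judgements $\Gamma\vdash\isdef m(i)$ for all $i\in\arity(\sigma)$, which are the remaining premisses of $(\EAr)$; these follow from the admissibility of $(\mathsf{Subterm})$ established in \autoref{L:admissible}, since each $m(i)$ is a subterm of $m\cdot f$ (picking any edge $\alpha(f)$ in the arity, or directly observing $m(i)\in\sub(m)$ — and if $\arity(\sigma)$ has no edges at all, then $m(i)$ is itself in the range of $m$ and we can still invoke $(\mathsf{Subterm})$ on the trivially derivable relational judgements, or note that when $m(i)$ is a variable $(\mathsf{Var})$ applies directly, and when $m(i)=\sigma'(m')$ we recurse). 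With all premisses of $(\EAr)$ in hand, a single application of $(\EAr)$ yields $\Gamma\vdash\isdef\sigma(m)$.

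I expect the only genuinely delicate point to be the corner case in which $\arity(\sigma)$ carries no $\Rels$-edges in its arity, so that the set of relational premisses of $(\EAr)$ is empty and we must independently ensure definedness of the arguments $m(i)$; this is handled cleanly by the admissibility of $(\mathsf{Subterm})$ together with $(\mathsf{Var})$, exactly as in the proof of \autoref{L:admissible}. Everything else is a routine unfolding of the shape of the rules of relational logic.
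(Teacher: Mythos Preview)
Your argument is correct and follows the same route as the paper. For the forward direction the paper simply invokes the admissible rule $(\mathsf{Arity})$ from \autoref{L:admissible}, whereas you inline its admissibility proof by noting that the last rule must be $(\EAr)$; for the converse both you and the paper apply $(\EAr)$, with you being more explicit about supplying the definedness premisses via $(\mathsf{Subterm})$.

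One clean-up: your handling of the corner case where $\arity(\sigma)$ might carry no edges (and the suggestion to ``recurse'' on $m(i)=\sigma'(m')$) is not a well-formed argument, but the worry is moot. By \autoref{assn:equality} the Horn theory entails the reflexivity edges in $\Eq(i,i)$, so for every $i\in\arity(\sigma)$ there is some $\phi\in\Eq(i,i)$ with $\arity(\sigma)\models\phi$; applying $(\mathsf{Subterm})$ to the assumed derivation of $\Gamma\vdash m\cdot\phi$ then yields $\Gamma\vdash\isdef m(i)$ directly, with no case distinction or recursion needed.
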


\begin{proof}
Suppose that $\Gamma\vdash\isdef\sigma(m)$ is derivable. Then 
$\Gamma\vdash\alpha(m\cdot f)$ is derivable for all edges $\alpha(f)$
in $|\arity(\sigma)|$ such that $\arity(\sigma)\models\alpha(f)$ by
application of $(\mathsf{Arity})$; this rule is admissible by~\autoref{L:admissible}. 
Conversely, if $\Gamma\vdash\alpha(m\cdot f)$ is derivable 
for all edges $\alpha(f)$ such that $\arity(\sigma)\models\alpha(f)$,
then $\Gamma\vdash\isdef\sigma(m)$ is derivable via ($\EAr$),
as desired. 
\end{proof}

In~\textbf{\autoref{R:relational-logic}} we have suggested that the relational
logic enjoys an admissible substitution rule. We substantiate this claim now: 

\begin{propn}\label{P:substitution}
  Let $X,Y\in\BC$ and let $\tau\colon Y \to \TSigma(X)$ be a
  substitution. The following rules are admissible:
  \[
    (\mathsf{SubsR})\;\frac{\{X\vdash\alpha(\tau\cdot f)\mid
      Y\models\alpha(f)\}\cup\{X\vdash\isdef\tau(y)\mid y\in Y\}
      \quad 
      Y\vdash\beta(g)}{X\vdash\beta(\tau\cdot g)}~%
    \begin{array}{l@{}l}
      (&\beta\in\Rels \\
      &g\colon\arity(\beta)\to\TSigma(Y))
    \end{array}
  \]
  \[
    (\mathsf{SubsD})\;\frac{\{X\vdash\alpha(\tau\cdot f)\mid
      Y\models\alpha(f)\}\cup\{X\vdash\isdef\tau(y)\mid y\in Y\}
      \quad 
      Y\vdash \isdef t}{X\vdash\isdef \tau(t)}~(t\in\TSigma(Y))		
  \]
\end{propn}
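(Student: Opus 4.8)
The plan is to prove both rules simultaneously by induction on the derivation of the right-hand premiss, i.e.~on the derivation of $Y\vdash\beta(g)$ in the case of $(\mathsf{SubsR})$ and of $Y\vdash\isdef t$ in the case of $(\mathsf{SubsD})$. Throughout, fix the substitution $\tau\colon Y\to\TSigma(X)$ and write $\mathcal{H}$ for the set of "hypotheses" $\{X\vdash\alpha(\tau\cdot f)\mid Y\models\alpha(f)\}\cup\{X\vdash\isdef\tau(y)\mid y\in Y\}$; we assume all judgements in $\mathcal{H}$ are derivable, and we must show that derivability of $Y\vdash\beta(g)$ (resp.~$Y\vdash\isdef t$) entails derivability of $X\vdash\beta(\tau\cdot g)$ (resp.~$X\vdash\isdef\tau(t)$). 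Since the rule system is syntax-directed on the shape of the conclusion, each inductive step inspects the last rule applied in the given derivation over $Y$ and constructs a matching derivation over $X$, using the induction hypothesis on the (strictly smaller) sub-derivations of the premisses.

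First I would treat the base cases. If the last rule was $(\mathsf{Var})$, so $t=y\in Y$, then $X\vdash\isdef\tau(y)$ is in $\mathcal{H}$ and hence derivable, which is exactly what is needed. If the last rule was $(\mathsf{Ctx})$, so $\beta(g)=\beta(y_1,\dots,y_{\arity(\beta)})$ with $Y\models\beta(y_1,\dots,y_{\arity(\beta)})$, then $X\vdash\beta(\tau(y_1),\dots,\tau(y_{\arity(\beta)}))$ is in $\mathcal{H}$ and hence derivable; note $\tau\cdot g$ is precisely the edge $\beta(\tau(y_i))$. For the inductive steps, the key observation is that every other rule ($(\mathsf{Mor})$, $(\EAr)$, $(\IAr)$, $(\mathsf{RelAx})$, $(\mathsf{Ax})$) has the form: from a family of relational and definedness judgements over $Y$, conclude a relational or definedness judgement over $Y$, where the premisses and conclusion share a common pattern obtained by composing with a substitution and/or an edge-reindexing map into $\TSigma(Y)$. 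In each such case I would apply the induction hypothesis to every premiss to obtain derivability over $X$ of the correspondingly $\tau$-substituted judgement, and then re-apply the very same rule over $X$, now with the composed substitution $\tau\cdot(-)$ in place of the original one. Concretely: for $(\mathsf{RelAx})$ applied with a Horn axiom $\Phi\rimpl\alpha(f)\in\RelAx$ and substitution $\rho\colon\Vars\to\TSigma(Y)$, one re-applies $(\mathsf{RelAx})$ with the same axiom and substitution $\tau\cdot\rho\colon\Vars\to\TSigma(X)$; for $(\mathsf{Ax})$ and $(\IAr)$ one proceeds analogously with the variety axiom $\Delta\vdash\beta(g')\in\E$, composing the witnessing substitution with $\tau$; for $(\mathsf{Mor})$ and $(\EAr)$ one composes the maps $f_i\colon\arity(\sigma)\to\TSigma(Y)$, resp.~$f\colon\arity(\sigma)\to\TSigma(Y)$, with $\tau$ to land in $\TSigma(X)$. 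In all cases the side conditions of the rule depend only on the Horn theory, the signature, or the variety axioms, and are unaffected by post-composition with $\tau$, so the re-application is legitimate.

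The main bookkeeping obstacle — and the step I expect to be most delicate — is the bridge from hypotheses about \emph{edges} of $Y$ to hypotheses about edges of \emph{arities} or \emph{axiom contexts}, together with the interaction between substitution and the inductively defined term-extension $\bar\tau$. Specifically, in the $(\mathsf{Mor})$, $(\EAr)$, $(\IAr)$, and $(\mathsf{Ax})$ cases one needs the identity $\tau\cdot(\rho\cdot f)=(\tau\cdot\rho)\cdot f$ for a substitution $\rho$ and a reindexing $f$, and more generally associativity of substitution composition $\overline{\tau\cdot\rho}=\bar\tau\cdot\bar\rho$ on terms; this is the standard substitution lemma, but it must be invoked carefully so that the premiss judgements supplied by the induction hypothesis exactly match the premisses demanded by the re-applied rule. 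A secondary subtlety is that when a premiss over $Y$ is a definedness judgement $Y\vdash\isdef\tau'(y')$ arising from the original rule's own "definedness of substituted variables" side premisses, applying the induction hypothesis for $(\mathsf{SubsD})$ yields $X\vdash\isdef\tau(\tau'(y'))$, which is again of the correct shape for the $X$-side rule once one rewrites $\tau\cdot\tau'$ as a single substitution. Once the substitution-lemma identities are in hand, each case is a mechanical matching of premisses and conclusion, so the proof reduces to a careful but routine induction.
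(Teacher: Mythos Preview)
Your proposal is correct and follows essentially the same approach as the paper: a simultaneous induction on the derivation of the right-hand premiss, with $(\mathsf{Var})$ and $(\mathsf{Ctx})$ as base cases, and each remaining rule handled by applying the induction hypothesis to its premisses and then re-applying the same rule over~$X$ with the composed substitution. The paper's proof is slightly terser (it treats $(\IAr)$ as analogous to $(\mathsf{Ax})$ and does not spell out the substitution-associativity bookkeeping you flag), but the structure and the key idea are identical.
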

\begin{proof}
  Suppose that all premisses in
  $\{X\vdash\alpha(\tau\cdot f)\mid
  Y\models\alpha(f)\}\cup\{X\vdash\isdef\tau(y)\mid y\in Y\}$ are
  derivable. We prove by simultaneous induction on derivations that
  whenever $Y\vdash\beta(g)$ is derivable, then
  $X\vdash\beta(\tau\cdot g)$ is derivable, and whenever
  $Y\vdash\isdef t$ is derivable, then $X\vdash\isdef\tau(t)$ is
  derivable.

  As a base case, observe that if $Y\vdash\beta(g)$ was derived using
  $(\mathsf{Ctx})$, then $Y\models\beta(g)$ whence
  $X\vdash\beta(\tau\cdot g)$ is derivable by assumption. Similarly,
  if $Y\models\isdef y$ was derived using $(\mathsf{Var})$, then
  $X\vdash\isdef\tau(y)$ is derivable by assumption.
  As for the inductive step, we first note that corresponding to $(\IAr)$ is analogous to
  the case for $(\mathsf{Ax})$ shown further below; the remaining cases are as
  follows.
  \begin{enumerate}
  \item[$(\mathsf{RelAx})$] Suppose that $Y \vdash \beta(g)$, where $g
    = \tau' \cdot f$, $\Phi \implies \beta(f)$ is some relational
    axiom and $\tau'\colon \Vars \to \TSigma(Y)$, is was derived in
    the last step from premisses:
    \begin{enumerate}
    \item $X \vdash \tau'\cdot \varphi$ for all $\varphi \in \Phi$;
    \item $\isdef\tau'(f(i))$ for all $i \in \arity(\alpha)$.
    \end{enumerate}
    By the induction hypothesis, $Y\vdash \tau \cdot \tau'
    \cdot\varphi$ for $\varphi \in \Phi$ and $Y \dashv
    \isdef\tau(\tau'(f(i)))$ for $i \in \arity(\alpha)$ are
    derivable. Thus $Y \vdash \beta(\tau \cdot \tau' \cdot f)$ is
    derivable by an application of the $(\mathsf{RelAx})$ rule.
    
  \item[$(\EAr)$:] Suppose that $Y\vdash\isdef \sigma(f)$, for
    $\sigma\in\Sigma$ and $f\colon\arity(\sigma)\to\TSigma(Y)$, was
    derived in the last step from premisses $Y\vdash\alpha(f\cdot g)$
    for all $\alpha(g)\in\arity(\sigma)$ and $X\vdash \isdef f(i)$ for
    all $i \in \arity(\alpha)$. By induction,
    $X\vdash\alpha(\tau\cdot f\cdot g)$ is derivable for all
    $\alpha(g)\in\arity(\sigma)$, and $X\vdash\isdef(\tau\cdot f)(i)$ is
    derivable for all $i\in\arity(\sigma)$. Hence
    $X\vdash\isdef\sigma(\tau \cdot f)$ is derivable via $(\EAr)$, and
    since $\tau(\sigma(f)) = \sigma( \tau \cdot f)$ we are done. 

  \item[$(\mathsf{Mor})$:] Suppose that $Y\vdash\beta(g)$, where
    $g\colon\arity(\beta)\to\TSigma(Y)$ has the form of an assignment
    $g(i) = \sigma(f_i)$ for some
    $\sigma\in\Sigma$ and $f_i\colon \arity(\sigma) \to \TSigma(Y)$
    for $i \in \arity(\beta)$,
    was derived  from premisses
    \begin{enumerate}
    \item $Y\vdash\beta(f_i(j))$ for all $j\in\arity(\sigma)$;
    \item $Y\vdash\isdef\sigma(f_i)$ for all $i\in\arity(\beta)$;
    \end{enumerate}
    Applying the inductive hypothesis to these items, it follows immediately
    that $X\vdash\beta(\tau\cdot g)$ is derivable via $(\mathsf{Mor})$.

  \item[$(\mathsf{Ax})$:] Finally, suppose that
    $Y\vdash\beta(\kappa\cdot k)$ was derived via $(\mathsf{Ax})$
    using an axiom $\Gamma\vdash\beta(k)\in\E$ and a substitution
    $\kappa\colon\Gamma\to\TSigma(Y)$ from premisses
    $Y\vdash\alpha(\kappa\cdot f)$ for all $\alpha(f)\in\Edge(\Gamma)$
    and $Y\vdash\isdef\kappa(z)$ for all
    $z\in\Gamma$. 
    By induction, $X\vdash\alpha(\tau\cdot\kappa\cdot f)$ is derivable
    for all $\alpha(f)\in\Edge(\Gamma)$, and
    $X\vdash\isdef(\tau\cdot\kappa)(z)$ is derivable for all
    $z\in\Gamma$. Thus, by application of $(\mathsf{Ax})$, we have a
    derivation of $X\vdash\beta(\tau\cdot\kappa\cdot k)$.  \qedhere
  \end{enumerate}
\end{proof}
\subparagraph*{Constructing free algebras}
\begin{lem}\label{L:ind}
  \begin{enumerate}
  \item\label{L:ind:1} The relational structure on $\F X$ is independent of the
    choice of $u$.

  \item\label{L:ind:2} $\sigma_{\F X}$ is defined on
    $[\arity(\sigma),\Term_{\V}(X)/{\sim}]$ and independent of the
    choice of the splitting $u$.
  \end{enumerate}
\end{lem}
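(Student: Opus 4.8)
The plan is to derive both assertions from two congruence properties of derivable equality $\sim$ on $\Term_{\V}(X)$, which hold by virtue of the deduction rules together with \autoref{assn:equality}, and then to finish each with a short computation about the splitting. The first property is \textbf{(A):} \emph{if $s_i,s_i'\in\Term_{\V}(X)$ with $s_i\sim s_i'$ for all $i\in\arity(\alpha)$, then $X\vdash\alpha(s_1,\dots,s_{\arity(\alpha)})$ is derivable iff $X\vdash\alpha(s_1',\dots,s_{\arity(\alpha)}')$ is.} For the forward direction one applies rule $(\mathsf{RelAx})$ to the axiom $\Eq(x_1,y_1)\cup\dots\cup\Eq(x_{\arity(\alpha)},y_{\arity(\alpha)})\cup\{\alpha(x_1,\dots,x_{\arity(\alpha)})\}\rimpl\alpha(y_1,\dots,y_{\arity(\alpha)})$ --- which lies in $\RelAx$ by \autoref{assn:equality} --- with the substitution $\tau$ defined by $\tau(x_i)=s_i$, $\tau(y_i)=s_i'$: the premisses $X\vdash\tau\cdot\varphi$ for $\varphi$ in some $\Eq(x_i,y_i)$ are derivable because $\tau\cdot\Eq(x_i,y_i)=\Eq(s_i,s_i')$ and $s_i\sim s_i'$; the premiss for $\varphi=\alpha(x_1,\dots,x_{\arity(\alpha)})$ is derivable by hypothesis; and the definedness premisses $X\vdash\isdef\tau(y_i)$, i.e.\ $X\vdash\isdef s_i'$, are derivable since $s_i'\in\Term_{\V}(X)$. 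The converse uses symmetry of $\sim$ (noted in the text, and itself derivable from \autoref{assn:equality} in the same way).

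The second property is \textbf{(B):} \emph{if $m,m'\colon|\arity(\sigma)|\to\Term_{\V}(X)$ with $m(i)\sim m'(i)$ for all $i$ and $\sigma(m),\sigma(m')\in\Term_{\V}(X)$, then $\sigma(m)\sim\sigma(m')$.} It suffices to derive $X\vdash\varphi$ for every $\varphi\in\Eq(\sigma(m),\sigma(m'))$; such a $\varphi$ equals $\rho\cdot\psi$ for some $\psi=\beta(h)\in\Eq(x,y)$ with $\beta\in\Rels$ and $h\colon\arity(\beta)\to\{x,y\}$, where $\rho(x)=\sigma(m)$, $\rho(y)=\sigma(m')$. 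One applies rule $(\mathsf{Mor})$ for this $\sigma$ and $\beta$, taking $f_i:=m$ if $h(i)=x$ and $f_i:=m'$ if $h(i)=y$; then the tuple $(\sigma(f_i))_i$ is precisely $\rho\cdot h$, so the conclusion of the rule is $X\vdash\varphi$, the definedness premisses $X\vdash\isdef\sigma(f_i)$ hold since $\sigma(m),\sigma(m')\in\Term_{\V}(X)$, and for each $j\in\arity(\sigma)$ the premiss $X\vdash\beta((f_i(j))_i)$ is the substitution instance of $\psi$ along $x\mapsto m(j)$, $y\mapsto m'(j)$, hence an edge of $\Eq(m(j),m'(j))$, hence derivable because $m(j)\sim m'(j)$.

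Given (A) and (B), both assertions are short. Any two splittings $u,u'$ of $q\colon\Term_{\V}(X)\to\Term_{\V}(X)/{\sim}$ satisfy $u([t])\sim u'([t])$ for all $[t]$, since $q\cdot u=q\cdot u'=\id$. For the independence of the relational structure: for $\alpha\in\Rels$ and $f\colon\arity(\alpha)\to|\Term_{\V}(X)/{\sim}|$, the edge $\alpha(f)$ lies in the structure computed via $u$ iff $X\vdash\alpha(u\cdot f)$ is derivable, and via $u'$ iff $X\vdash\alpha(u'\cdot f)$ is derivable; these are equivalent by (A) applied to $s_i=u(f(i))$, $s_i'=u'(f(i))$. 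For the statement about $\sigma_{\F X}$: given a relation-preserving $f\colon\arity(\sigma)\to\Term_{\V}(X)/{\sim}$, the term $\sigma(u\cdot f)$ lies in $\Term_{\V}(X)$ by an application of $(\EAr)$ to $\sigma$ and $u\cdot f$, whose definedness premisses hold because each $u(f(i))\in\Term_{\V}(X)$ and whose relational premisses $X\vdash\alpha((u\cdot f)\cdot g)$ (for edges $\alpha(g)$ of $\arity(\sigma)$) hold because $f$ is relation-preserving and, by definition of the structure on $\Term_{\V}(X)/{\sim}$, this amounts to $\Term_{\V}(X)/{\sim}\models\alpha(f\cdot g)$. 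Hence $\sigma_{\F X}(f)=[\sigma(u\cdot f)]$ is a well-defined element of $\Term_{\V}(X)/{\sim}$; and applying this also to $u'$, (B) with $m=u\cdot f$, $m'=u'\cdot f$ gives $\sigma(u\cdot f)\sim\sigma(u'\cdot f)$, i.e.\ $[\sigma(u\cdot f)]=[\sigma(u'\cdot f)]$.

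The main obstacle I anticipate is the index bookkeeping in (B): one has to align an arbitrary edge $\psi\in\Eq(x,y)$ with the pattern of the maps $f_i$ needed to instantiate $(\mathsf{Mor})$, and then recognise the resulting premisses as substitution instances of $\Eq$-edges --- while simultaneously discharging the definedness side conditions of $(\mathsf{Mor})$ (this is exactly the point at which the well-definedness part of the $\sigma_{\F X}$ statement is invoked). Step (A) is essentially immediate once the relevant congruence axiom is read off from \autoref{assn:equality}, and the computations in the last paragraph are routine unfoldings of the definitions of $\Term_{\V}(X)/{\sim}$ and of $\sigma_{\F X}$.
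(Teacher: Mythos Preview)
Your proof is correct and follows essentially the same approach as the paper: the paper also uses the congruence axiom from \autoref{assn:equality} via rule $(\mathsf{RelAx})$ for part~\ref{L:ind:1}, and rule $(\EAr)$ for well-definedness together with rule $(\mathsf{Mor})$ for independence in part~\ref{L:ind:2}. Your organization is slightly cleaner in that you isolate the two congruence properties (A) and (B) as reusable lemmas and are more explicit about the index bookkeeping in the application of $(\mathsf{Mor})$, whereas the paper argues directly for the two given splittings; but the technical content is identical.
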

\begin{proof}
  \begin{enumerate}
  \item Given another splitting $v$ of $q$, we need to prove that
    $\Term_\V(X) \models \alpha(u \cdot f)$ iff
    $\Term_\V(X)\models \alpha(v \cdot f)$ for every
    $f\colon \arity(\alpha) \to \F X$. Equivalently,
    $X \vdash \alpha(u \cdot f)$ is derivable iff so is
    $X \vdash \alpha(v \cdot f)$. Since $u, v$ are splittings of $q$
    we have that for each $i \in \arity(\alpha)$,
    $X \vdash \varphi(u \cdot g(i), v \cdot g(i))$ is derivable for
    every $\varphi(x,y) \in \Eq(x,y)$. Since by
    \autoref{assn:equality}, $\RelAx$ explicitly includes axioms
    stating that all relations are closed under $\Eq(-,-)$, the claim
    follows by an application of the $(\textsf{RelAx})$ rule in each
    direction.%
    
  \item For the first claim, we show that $\sigma(u\cdot g)$ is
    defined for all $g\in[\arity(\sigma), \F X]$. Since $g$ is
    relation preserving, it follows that for every edge $\alpha(f)$
    such that $\arity(\sigma)\models\alpha(f)$ we have
    $\F X\models\alpha(u\cdot g\cdot f)$. That is,
    $X\vdash\alpha(u\cdot g\cdot f)$ is derivable.  Hence
    $X\vdash\isdef\sigma(u\cdot g)$ is derivable via $(\EAr)$, which
    implies $\sigma(u\cdot g)\in\Term_{\V}(X)$.

    We now show that $[\sigma(u\cdot g)]$ is independent of the choice
    of $u$. To this end, suppose that we are given another splitting
    $v$, and let $\varphi(x,y)\in\Eq(x,y)$.  By definition of~$\sim$,
    it suffices to show that
    $X\vdash\varphi(\sigma(u\cdot g), \sigma(v\cdot g))$ is
    derivable. Since $u,v$ are splittings of~$q$, we have that for
    each $i\in\arity(\sigma)$,
    $X\vdash\varphi(u\cdot g(i),v\cdot g(i))$ is derivable. The stated
    goal $X\vdash\varphi(\sigma(u\cdot g), \sigma(v\cdot g))$ follows
    by rule $(\mathsf{Mor})$.\qedhere
  \end{enumerate}
\end{proof}
\begin{rem}
  The set $\Term_\V(X)$ is a $\Sigma$-algebra under the operations
  given by the usual term formation: for every operation symbol
  $\sigma$ in $\Sigma$ we have
  \[
    \sigma_{\Term_\V(X)}\colon [\arity(\sigma), \Term_\V(X)] \to
    \Term_{\V}(X)
    \qquad\text{given by}\qquad
    f \mapsto \sigma(f). 
  \]
  Indeed, for a relation preserving map $f\colon \arity(\sigma) \to
  \Term_\V(X)$ we have $X \vdash \alpha(f \cdot g)$  for every
  $\alpha(g) \in \arity(\sigma)$. Thus $X \vdash \isdef{\sigma(f)}$ by
  an application of the $(\textsf{E-Ar})$ rule. 
\end{rem}
\begin{cor}\label{C:homo}
  The quotient map $q\colon \Term_\V(X) \to \F X$ is a relation
  preserving $\Sigma$-algebra homomorphism. 
\end{cor}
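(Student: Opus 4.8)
The plan is to unfold the two conditions defining a relation-preserving $\Sigma$-algebra homomorphism and reduce each to a single application of a rule of relational logic. The one fact used throughout is that for every $t \in \Term_\V(X)$ the chosen representative satisfies $t \sim u(q(t))$ --- so $X \vdash \varphi(t, u(q(t)))$ is derivable for all $\varphi \in \Eq(x,y)$ --- and that $u(q(t)) \in \Term_\V(X)$, so that $X \vdash \isdef u(q(t))$ is derivable as well.

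First I would show that $q$ is relation preserving. Suppose $\Term_\V(X) \models \alpha(f)$, i.e.\ $X \vdash \alpha(f)$ is derivable, for some $f\colon \arity(\alpha) \to \Term_\V(X)$. By the definition of the $\Rels$-structure on $\F X$, it suffices to derive $X \vdash \alpha(u \cdot q \cdot f)$. This is obtained by one application of $(\mathsf{RelAx})$ to the axiom $\Eq(x_1,y_1) \cup \dots \cup \Eq(x_{\arity(\alpha)},y_{\arity(\alpha)}) \cup \{\alpha(x_1,\dots,x_{\arity(\alpha)})\} \rimpl \alpha(y_1,\dots,y_{\arity(\alpha)})$, which lies in $\RelAx$ by \autoref{assn:equality}, under the substitution $\tau$ with $\tau(x_i) = f(i)$ and $\tau(y_i) = u(q(f(i)))$: the relational premisses $X \vdash \tau\cdot\varphi$ are derivable because $X \vdash \alpha(f)$ is derivable by assumption and $f(i) \sim u(q(f(i)))$, and the definedness premisses $X \vdash \isdef u(q(f(i)))$ hold since $u(q(f(i))) \in \Term_\V(X)$. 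Since $\tau\cdot\alpha(y_1,\dots,y_{\arity(\alpha)}) = \alpha(u\cdot q \cdot f)$, this is exactly what is needed. In particular, $q \cdot f$ is relation preserving whenever $f$ is, so $\sigma_{\F X}(q\cdot f)$ is defined for all $f \in [\arity(\sigma), \Term_\V(X)]$.

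Next I would verify the homomorphism square for each $\sigma \in \Sigma$. For relation-preserving $f\colon \arity(\sigma) \to \Term_\V(X)$ we have, by the remark preceding the corollary and the definition of $\sigma_{\F X}$, that $q(\sigma_{\Term_\V(X)}(f)) = [\sigma(f)]$ while $\sigma_{\F X}(q \cdot f) = [\sigma(u \cdot q \cdot f)]$; hence it remains to show $\sigma(f) \sim \sigma(u \cdot q \cdot f)$, i.e.\ that $X \vdash \varphi(\sigma(f), \sigma(u \cdot q \cdot f))$ is derivable for every $\varphi \in \Eq(x,y)$. This follows exactly as in the proof of \autoref{L:ind}\ref{L:ind:2}, by an application of $(\mathsf{Mor})$ whose premisses are the derivable judgements $X \vdash \varphi(f(j), u(q(f(j))))$ for $j \in \arity(\sigma)$ (using $f(j) \sim u(q(f(j)))$) together with $X \vdash \isdef \sigma(f)$ and $X \vdash \isdef \sigma(u \cdot q \cdot f)$ --- both derivable since $\sigma(f) = \sigma_{\Term_\V(X)}(f) \in \Term_\V(X)$ and, by the first part of the proof of \autoref{L:ind}\ref{L:ind:2} applied to the relation-preserving map $q \cdot f$, also $\sigma(u\cdot q\cdot f) \in \Term_\V(X)$.

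The argument is routine given the earlier lemmas; the only mild care needed is the bookkeeping in the side conditions of $(\mathsf{RelAx})$ and $(\mathsf{Mor})$ --- in particular, checking that all required definedness judgements are available --- but no genuine obstacle is expected.
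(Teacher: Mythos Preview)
Your proof is correct, but it takes a different route from the paper's. The paper exploits \autoref{L:ind} more fully: since the relational and algebraic structure on $\F X$ are independent of the splitting~$u$, one may, for each fixed $f$ (resp.~$g$), choose~$u$ so that $u\cdot q\cdot f = f$ (resp.~$u\cdot q\cdot g = g$). With this choice the claims become immediate identities---$\Term_\V(X)\models\alpha(f)$ is literally $\Term_\V(X)\models\alpha(u\cdot q\cdot f)$, and $q(\sigma(g)) = q(\sigma(u\cdot q\cdot g)) = \sigma_{\F X}(q\cdot g)$---with no appeal to $(\mathsf{RelAx})$ or $(\mathsf{Mor})$ needed. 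Your argument instead keeps~$u$ fixed and re-derives, via the rules of relational logic, essentially the content of \autoref{L:ind} specialized to the two splittings $u$ and the ``identity-on-$f$'' splitting. This is perfectly valid and arguably more self-contained, but it duplicates work already encapsulated in \autoref{L:ind}; the paper's change-of-splitting trick is shorter and shows more clearly why \autoref{L:ind} is the right lemma to isolate.
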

\begin{proof}
  To see that $q$ relation preserving, suppose
  $\T_\V(X) \models \alpha(f)$ for
  $f\colon \arity(\alpha) \to \Term_V(X)$. By \autoref{L:ind}\ref{L:ind:1}, we may
  assume that the spliting $u$ is chosen such that
  $u \cdot q \cdot f = f$. Therefore $\Term_V(X) \models \alpha(u
  \cdot q \cdot f)$ which gives $\F X \models \alpha(q \cdot f)$ as
  desired. 

  Given $\sigma$ in $\Sigma$ and $g\colon \arity(\sigma) \to
  \Term_\V(X)$ relation preserving, we may assume by \autoref{L:ind}\ref{L:ind:2}
  that the splitting $u\colon \F X \to \Term_\V(X)$ is chosen such
  that $u \cdot q \cdot g = g$. Then we have
  \begin{align*}
    q \cdot \sigma_{\Term_\V(X)}(g)
    &=
    q(\sigma(g))
    &
    \text{def.~of $\sigma_{\Term_{\V}(X)}$}
    \\
    &=
    q(\sigma(u \cdot q \cdot g))
    &
    \text{since $g = u\cdot q \cdot g$}
    \\
    &=
    \sigma_{\F X}(q \cdot g).
    &\text{def.~of $\sigma_{\F X}$} & \qedhere
  \end{align*}
\end{proof}

%
%
\begin{proof}[Proof of \autoref{thm:free-alg}]
  \takeout{
    \textbf{$\F X$ is an algebra in $\V$}: Indeed, let
    $\Delta\vdash\beta(g)$ be an axiom of $\V$, where
    $g\colon \arity(\beta) \to T_\Sigma(\Delta)$ and let
    $e\colon\Delta\to\F X$ be a relation preserving map; we are going to
    show that $e^\#(g(i))$ is defined for all $i\in\arity(\beta)$ and
    that $\F X\models\beta(e^\#\cdot g)$. (This implies that $\F X$
    satisfies $\Delta\vdash\beta(g)$, as required.) To this end, define
    a substitution $\tau\colon\Delta\to\Term_\V(X)$ (again note
    $\Term_\V(X)\subseteq T_\Sigma(X)$) as the map
    \[
      \tau
      =
      |\Delta|\xra{e}\F X \xra{u}\Term_{\V}(X)
    \]
    
    For all edges $\alpha(f)\in\Delta$, we have that
    $\F X\models\alpha(e\cdot f)$ because $e$ is
    relation-preserving. Equivalently,
    $\Term_V(X) \models \alpha(u \cdot e \cdot f)$, which means that
    $X\vdash\alpha(\tau\cdot f)$ is derivable. Hence also
    $X\vdash\beta(\tau\cdot g)$ is derivable via $(\mathsf{Ax})$.  Then,
    using the $(\mathsf{Subterm})$ rule (\autoref{L:admissible}), we see
    that for each $i\in\arity(\beta)$, $X\vdash\isdef(\tau\cdot g(i))$
    is derivable.  By \autoref{lem:FX-sondness}, and because
    $q\cdot\tau=e$, we obtain that $e^\#(f(i))$ is defined in $\F X$ and
    $\F X\models\beta(e^\#\cdot g)$, as desired.

    \smnote[inline]{Alternative proof attempt:}}
  Indeed, let $\Delta\vdash\beta(g)$ be an axiom of $\V$, where
  $g\colon \arity(\beta) \to T_\Sigma(\Delta)$, and let
  $e\colon\Delta\to\F X$ be a relation preserving map; we are going to
  show that $e^\#(g(i))$ is defined for all $i\in\arity(\beta)$ and
  that $\F X\models\beta(e^\#\cdot g)$. (This implies that $\F X$
  satisfies $\Delta\vdash\beta(g)$, as required.) To this end, define
  a substitution $\tau\colon\Delta\to\Term_\V(X)$ (again note
  $\Term_\V(X)\subseteq T_\Sigma(X)$) as the map
  \[
    \tau
    =
    |\Delta|\xra{e}\F X \xra{u}\Term_{\V}(X).
  \]
  For all edges $\alpha(f)\in\Delta$, we have that
  $\F X\models\alpha(e\cdot f)$ because $e$ is
  relation-preserving. Equivalently,
  $\Term_V(X) \models \alpha(u \cdot e \cdot f)$, which means that
  $X\vdash\alpha(\tau\cdot f)$ is derivable. Hence also
  $X\vdash\beta(\tau\cdot g)$ is derivable via $(\mathsf{Ax})$. Since
  $q$ is relation preserving by \autoref{C:homo} we have
  $\F X \models \beta(q \cdot \tau \cdot g)$. We shall show below that
  for all $r$ in $\sub(g)$ we have that both $q \cdot \tau(r)$ and $e^\#(r)$ are
  defined and 
  \begin{equation}\label{eq:qtau}
    q \cdot \tau(r) = e^\#(r).
  \end{equation}
  (Recall from \autoref{def:subst} that we write $\tau$ also for the
  extension $\bar\tau\colon T_\Sigma(\Delta) \to T_\Sigma(X)$.) Thus,
  in particular $e^\#(g(i))$ is defined for all $i \in \arity(\beta)$,
  and we obtain $\F X \models \beta(e^\# \cdot g)$ as desired.

 To prove~\eqref{eq:qtau} we proceed by induction on $r$. For a
  variable $x$ in $\sub(g)$ we clearly have that both $q \cdot \tau$
  and $e^\#$ are defined in $x$ and
  \[
    q\cdot \tau(x) = q \cdot u \cdot e(x) = e(x) = e^\#(x),
  \]
  using the definitions of $\tau$ and $e^\#$ and that
  $q \cdot u = \id$. For a term $\sigma(k)$ in $\sub(g)$ we know by
  induction that both $q \cdot \tau(k(i))$ and $e^\#(k(i))$ are
  defined and $q\cdot \tau(k(i)) = e^\#(k(i))$ for all
  $i \in \arity(\sigma)$.  Then $\tau(\sigma(k))$ is a provably
  defined term, i.e.~it lies in $\Term_\V(X)$, so that $q$ is defined
  on it; indeed, since $X \vdash \beta(\tau \cdot g)$ is derivable,
  this follows from an application of the $(\textsf{Subterm})$ rule in
  \autoref{L:admissible} since $\sigma(k)$ in $\sub(g)$ yields that
  $\tau(\sigma(k))$ lies in $\sub(\tau \cdot g)$. Now we obtain
  \begin{align*}
    q \cdot \tau(\sigma(k))
    &=
    q(\sigma(\tau \cdot k))
    &
    \text{def.~of $\tau = \bar \tau$}
    \\
    &=
    \sigma_{\F X}(q\cdot \tau \cdot k)
    &\text{by \autoref{C:homo}}
    \\
    &=
    \sigma_{\F X}(e^\# \cdot k)
    & \text{by induction}
    \\
    &=
    e^\#(\sigma(k))
    &
    \text{def.~of $e^\#$}.
  \end{align*}

  \smallskip\noindent \textbf{$\F X$ is free on $X$}: We define
  $\eta_X\colon X \to \F X$ by $\eta_X(x) = [x]$ and prove that this
  is a universal map. First observe that $\eta$ is clearly relation
  preserving since for every $X \models \alpha(f)$ we have
  $\F X \models \alpha(\eta_X \cdot f)$ by the rule
  $(\mathsf{Ctx})$. Now given $f\colon X \to A$ relation preserving we
  define $\bar f\colon \F X \to A$ by
  \[
    \bar f([t]) = f^\#(t)
    \qquad
    \text{for all $t \in \Term_{\V}(X)$}.
  \]
  Then $\bar f$ is well-defined by soundness (proved below), and we 
  clearly have $\bar f \cdot \eta_X = f$.
  \begin{enumerate}
  \item We prove that $\bar f$ is relation preserving. Suppose that
    $\F X \models \alpha(g)$. Equivalently we have
    $\Term_\V(X) \models \alpha(u \cdot g)$, which in turn means that
    $X \vdash \alpha(u \cdot g)$ is derivable. By soundness, $A$
    satisfies $X \vdash \alpha(u \cdot g)$. In particular, we have
    $A \models \alpha(f^\# \cdot u \cdot g)$. Then we obtain
    $A \models \alpha(\bar f \cdot g)$ as desired since
    \[
      f^\# \cdot u = \bar f \cdot q \cdot u = \bar f
    \]
    because $f^\# \cdot q = \bar f$ by the definition of $\bar f$.

  \item Next we show that $\bar f$ is $\Sigma$-algebra homomorphism.
    Indeed, for every operation symbol $\sigma$ we show that the
    following square commutes:
    \[
      \begin{tikzcd}[column sep = 40]
        {[\arity(\sigma), \F X]}
        \ar{r}{\sigma_{\F X}}
        \ar{d}[swap]{\bar f \cdot (-)}
        &
        \F X
        \ar{d}{\bar f}
        \\
        {[\arity(\sigma), A]}
        \ar{r}{\sigma_A}
        &
        A
      \end{tikzcd}
    \]
    Indeed, for every $g$ in $[\arity(\sigma), \F X]$ we have
    \begin{align*}
      \bar f (\sigma_{\F X}(g))
      &= \bar f([\sigma(u \cdot g)])
      &
      \text{def.~of $\sigma_{\F X}$}
      \\
      &=
      f^\#(\sigma(u \cdot g))
      &\text{def.~of $\bar f$}
      \\
      &=
      \sigma_A(f^\# \cdot u \cdot g)
      & \text{def.~of $f^\#$}
      \\
      &= \sigma_A(\bar f \cdot q \cdot u \cdot g),
      &
      \text{since $\bar f \cdot q = f^\#$ by def.}
      \\
      &=
      \sigma_A(\bar f \cdot g)
      &
      \text{since $q \cdot u = \id$.}
    \end{align*}
    
  \item Finally, we show that $\bar f$ is unique. Indeed this follows
    from the required homomorphism property and the required extension
    property $\bar f\cdot \eta_X=f$, since by construction every
    element of $\F X$ is denoted by a well-founded term formed from
    the elements of~$X$ using the operations of~$\Sigma$ (formally,
    use well-founded induction on terms).
    \qedhere
  \end{enumerate}
\end{proof}

\begin{lem}\label{L:complete}
  Let $t\in\TSigma(X)$. Then, for the universal map
  $\eta_X\colon X\to\F X$, if $\eta_X^\#(t)$ is defined, then
  $t\in\Term_{\V}(X)$ and $\eta_X^\#(t) = [t]$.
\end{lem}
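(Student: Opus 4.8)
The plan is to prove the two assertions simultaneously by structural induction on $t\in\TSigma(X)$, fixing throughout the splitting $u\colon\F X\to\Term_\V(X)$ of $q\colon\Term_\V(X)\to\F X$ that was used to define $\F X$ and its operations. The base case $t=x\in|X|$ is immediate: $\eta_X^\#(x)=\eta_X(x)=[x]$ by definition, and $X\vdash\isdef x$ is derivable by rule $(\mathsf{Var})$, so $x\in\Term_\V(X)$.

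For the induction step let $t=\sigma(f)$ with $f\colon\arity(\sigma)\to\TSigma(X)$, and assume $\eta_X^\#(\sigma(f))$ is defined. By \autoref{D:evaluation} this yields (a) $\eta_X^\#(f(i))$ defined for all $i\in\arity(\sigma)$, and (b) $\F X\models\alpha(\eta_X^\#\cdot f\cdot g)$ for every $\Rels$-edge $\alpha(g)$ in $\arity(\sigma)$. The induction hypothesis applied to the immediate subterms $f(i)$ gives $f(i)\in\Term_\V(X)$ and $\eta_X^\#(f(i))=q(f(i))$. To obtain $X\vdash\isdef\sigma(f)$ via rule $(\EAr)$ it then remains to derive $X\vdash\alpha(f\cdot g)$ for each edge $\alpha(g)$ in $\arity(\sigma)$. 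By (b) and the induction hypothesis, $\F X\models\alpha(q\cdot f\cdot g)$, and unfolding the definition of the relational structure of $\F X$ this says precisely that $X\vdash\alpha(u\cdot q\cdot f\cdot g)$ is derivable. Since $q\cdot u=\id$, the terms $u(q(f(g(i))))$ and $f(g(i))$ lie in the same $\sim$-class, so $X\vdash\varphi(u(q(f(g(i)))),f(g(i)))$ is derivable for every $\varphi\in\Eq(x,y)$ and every $i\in\arity(\alpha)$; feeding these together with $X\vdash\alpha(u\cdot q\cdot f\cdot g)$ into rule $(\mathsf{RelAx})$ instantiated with the equality-closure axiom for $\alpha$ guaranteed by \autoref{assn:equality} (whose definedness side conditions are supplied by the induction hypothesis and by $u$ landing in $\Term_\V(X)$) yields $X\vdash\alpha(f\cdot g)$. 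Hence $X\vdash\isdef\sigma(f)$ is derivable and $\sigma(f)\in\Term_\V(X)$.

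It remains to show $\eta_X^\#(\sigma(f))=[\sigma(f)]$. As the left-hand side is defined, it equals $\sigma_{\F X}(\eta_X^\#\cdot f)=\sigma_{\F X}(q\cdot f)=[\sigma(u\cdot q\cdot f)]$ by the induction hypothesis and the definition of $\sigma_{\F X}$, so it suffices to prove $\sigma(u\cdot q\cdot f)\sim\sigma(f)$. We already know $u(q(f(i)))\sim f(i)$ for all $i$, and both $\sigma(u\cdot q\cdot f)$ and $\sigma(f)$ are provably defined (the latter by the previous paragraph; the former by another application of $(\EAr)$, using the edge judgements for $u\cdot q\cdot f$ established above together with $X\vdash\isdef u(q(f(i)))$). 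Applying the congruence rule $(\mathsf{Mor})$ to each predicate $\varphi\in\Eq(x,y)$ with the two argument tuples $u\cdot q\cdot f$ and $f$ produces $X\vdash\varphi(\sigma(u\cdot q\cdot f),\sigma(f))$ for all $\varphi\in\Eq(x,y)$, i.e.\ $\sigma(u\cdot q\cdot f)\sim\sigma(f)$, completing the induction. The only delicate point — and the one demanding care throughout — is that every fact extracted from the relational structure of $\F X$ speaks about $u\circ q$-renamed terms rather than the original ones, and this discrepancy must be repaired systematically by invoking the equality predicate $\Eq$ through rules $(\mathsf{RelAx})$ and $(\mathsf{Mor})$; apart from this bookkeeping the argument is the routine completeness counterpart of the soundness computation in the proof of \autoref{thm:free-alg}.
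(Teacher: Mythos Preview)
Your proof is correct and follows the same inductive skeleton as the paper's, but handles the splitting~$u$ differently. The paper exploits \autoref{L:ind} (independence of~$\F X$ and its operations from the choice of splitting) to replace~$u$ locally by one satisfying $u\cdot q\cdot m = m$; this collapses $u\cdot\eta_X^\#\cdot m\cdot g$ to $m\cdot g$ in one line, and the final identity $\eta_X^\#(\sigma(m))=[\sigma(m)]$ then drops out of \autoref{C:homo}. You instead keep a fixed~$u$ and repair the gap between $u\cdot q\cdot f$ and~$f$ by explicit equality reasoning in the deduction system, invoking the $\Eq$-closure axioms via $(\mathsf{RelAx})$ for the relational judgements and $(\mathsf{Mor})$ for the final congruence $\sigma(u\cdot q\cdot f)\sim\sigma(f)$. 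Your route is more self-contained (it does not appeal to \autoref{L:ind} or \autoref{C:homo}) at the price of the bookkeeping you yourself flag; the paper's route is shorter because the independence lemma absorbs exactly that bookkeeping once and for all.
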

\noindent
The latter means that $\eta^\#$ is the canonical quotient map
$q\colon \Term_\V(X) \to \F X$.
\begin{proof}
By induction on terms. If $t$ is a variable in $X$,
then $X\vdash\isdef t$ is derivable via $(\mathsf{Var})$
and $\eta_X^\#(t)=\eta_X(t)=[t]$. Inductively assume
that the lemma holds for all $s\in\sub(t)$, and let
$t=\sigma(m)$ for some $\sigma\in\Sigma$ and 
some $m\colon\arity(\sigma)\to\TSigma(X)$ such that
$\eta_X^\#(\sigma(m))$ is defined. Then:
\begin{enumerate}
\item
$\eta_X^\#(m(i))$ is defined for all $i\in\arity(\sigma)$;
\item
$\F X\models\beta(\eta_X^\#\cdot m\cdot g)$ for all 
$g\colon\arity(\beta)\to\arity(\sigma)$ such that 
$\arity(\sigma)\models\beta(g)$.
\end{enumerate}
Combining the inductive hypothesis with (1), we 
have that $X\vdash\isdef m(i)$ for all $i\in\arity(\sigma)$, 
and $\eta^\#(m(i)) = [m(i)]$; that is, $\eta_X^\# \cdot m = q \cdot m$.  

In particular, by (2), we have that
$\Term_\V \models \beta(u\cdot\eta_X^\#\cdot m\cdot g)$ for all
$g\colon\arity(\beta)\to\arity(\sigma)$ such that
$\arity(\sigma)\models\beta(g)$ and where
$u\colon \F X \to \Term_\V(X)$ is a splitting of the canonical
quotient map~$q$. By \autoref{L:ind}\ref{L:ind:2}, we may assume that
$u$ us chosen such that $u \cdot q \cdot m = m$. Now, we have%
\smnote{I think element-free is better.}
\[
  u\cdot\eta^\#_X\cdot m\cdot g
  =
  u \cdot q \cdot m \cdot g
  =
  m \cdot g,
\]
whence $\Term_\V \models \beta(m \cdot g)$. This means that
$X\vdash\beta(m\cdot g)$ is derivable for all
$\beta(g)\in\arity(\sigma)$. Hence $X\vdash\isdef\sigma(m)$
is derivable via $(\EAr)$, which means that $\sigma(m)$ lies in
$\Term_\V(X)$. Moreover, we have
\begin{align*}
  \eta_X^\#(\sigma(m))
  &= \sigma_{\F X} (\eta_X^\# \cdot m)
  & \text{def.~of $\eta_X^\#$}
  \\
  &= \sigma_{\F X} (q \cdot m)
  &\text{induction hypothesis}
  \\
  &= q(\sigma(m))
  &\text{by \autoref{C:homo}}
  \\
  &= [\sigma(m)].\tag*{\qedhere}
\end{align*}
%
%
\end{proof}

\takeout{ 
\begin{lem}[Substitution lemma]\label{L:substitution}
  Let $A$ be a $\Sigma$-algebra, let $t\in T_\Sigma(X)$, let
  $\tau\colon |X|\to T_\Sigma(Y)$ be a substitution, and let
  $e\colon |Y|\to A$ be an assignment such that $e^\#(\tau(t))$ is
  defined. Then $e^\#(\tau(s))$ is defined for all~$s\in\sub(t)$, and
  \begin{equation*}
    e^\#(\tau(s)) =e_\tau^\#(s)\quad\text{where $e_\tau(x)=e^\#(\tau(x))$ for $x\in X\cap\sub(t)$}
  \end{equation*}
  (and $e_\tau(x)$ is defined arbitrarily for
  $x\in X\setminus\sub(t)$).
\end{lem}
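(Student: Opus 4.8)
The plan is to prove the statement by structural induction on the term $t\in T_\Sigma(X)$, carrying definedness and the claimed equality together for all subterms. Throughout I use the convention of \autoref{def:subst} identifying $\tau$ with its extension to terms, so that $\tau(\sigma(f))=\sigma(\tau\cdot f)$, and I read $e^\#$ and $e_\tau^\#$ as the partial maps determined by the recursion of \autoref{D:evaluation} (the recursion makes sense for the assignment $e_\tau$ exactly as for $e$). I first note that for $s\in\sub(t)$ the value $e_\tau^\#(s)$ depends on $e_\tau$ only at variables in $\sub(s)\cap|X|\subseteq\sub(t)\cap|X|$, where $e_\tau$ takes its prescribed value $e^\#(\tau(x))$; hence the arbitrary choice of $e_\tau$ off $\sub(t)$ is irrelevant and there is no circularity. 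For the base case $t=x$ a variable, $\sub(t)=\{x\}$, $\tau(x)$ is the only subterm image, $e^\#(\tau(x))$ is defined by hypothesis, and $e_\tau^\#(x)=e_\tau(x)=e^\#(\tau(x))$ by definition of $e_\tau$.

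For the inductive step let $t=\sigma(f)$ with $f\colon|\arity(\sigma)|\to T_\Sigma(X)$, so $\tau(t)=\sigma(\tau\cdot f)$. Assuming $e^\#(\tau(t))$ is defined, clause~(2) of \autoref{D:evaluation} yields that $e^\#(\tau(f(i)))$ is defined for every $i\in\arity(\sigma)$, and that $A\models\alpha(e^\#\cdot\tau\cdot f\cdot g)$ for every edge $\alpha(g)$ in $\arity(\sigma)$. Each $f(i)$ lies in $\sub(t)$, and $e^\#(\tau(f(i)))$ is defined, so the induction hypothesis applies to $f(i)$: for all $s\in\sub(f(i))$ the value $e^\#(\tau(s))$ is defined and equals $e_\tau^\#(s)$. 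Since $\sub(t)=\{t\}\cup\bigcup_i\sub(f(i))$, this establishes the claim for every proper subterm of $t$, leaving only $s=t$ to treat.

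For $s=t$, the induction hypothesis in particular gives $e_\tau^\#(f(i))=e^\#(\tau(f(i)))$ for all $i$, i.e.\ $e_\tau^\#\cdot f=e^\#\cdot\tau\cdot f$ as maps $\arity(\sigma)\to A$. Hence clause~(2a) of \autoref{D:evaluation} holds for $e_\tau^\#(\sigma(f))$ (all $e_\tau^\#(f(i))$ are defined), and clause~(2b) transfers verbatim: for each edge $\alpha(g)$ in $\arity(\sigma)$ we have $e_\tau^\#\cdot f\cdot g=e^\#\cdot\tau\cdot f\cdot g$, so $A\models\alpha(e_\tau^\#\cdot f\cdot g)$ follows from the constraint noted above. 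Therefore $e_\tau^\#(\sigma(f))$ is defined, and
\[
  e_\tau^\#(\sigma(f))=\sigma_A(e_\tau^\#\cdot f)=\sigma_A(e^\#\cdot\tau\cdot f)=e^\#(\sigma(\tau\cdot f))=e^\#(\tau(t)),
\]
which completes the induction.

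The induction is routine; the only point requiring care is the bookkeeping of partiality. One must first extract definedness of the subterm evaluations $e^\#(\tau(f(i)))$ from definedness of $e^\#(\tau(t))$ before the induction hypothesis can be invoked, and then reconstruct definedness of $e_\tau^\#(t)$ from that hypothesis. The observation that makes both directions align is that the relational side-condition~(2b) is phrased entirely through the tuples $e^\#\cdot\tau\cdot f$ respectively $e_\tau^\#\cdot f$, so the inductive identity $e_\tau^\#\cdot f=e^\#\cdot\tau\cdot f$ lets each definedness constraint pass between the two evaluations unchanged.
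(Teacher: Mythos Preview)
Your proof is correct and follows essentially the same route as the paper's: a structural induction that unfolds the definition of evaluation on $\sigma(\tau\cdot f)$, uses the induction hypothesis to identify $e^\#\cdot\tau\cdot f$ with $e_\tau^\#\cdot f$, and then reassembles. The only organizational difference is that the paper first records, as a one-line preliminary, that definedness of $e^\#$ is inherited by subterms and then runs the induction purely on the equality, whereas you carry definedness and equality together through the induction; both are fine.
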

\begin{proof}
  Induction on subterms~$s$ of~$t$. We note first that by the
  definition of evaluation, definedness is inherited by subterms, so
  $e^\#(\tau(s))$ is defined for all such~$s$, in particular, for~$s$
  a variable in~$X$ as claimed in the statement.

  First let~$s$ be a variable~$x$. Then
  \begin{equation*}
    e^\#(\tau(x)) = e_\tau(x) = e_\tau^\#(x).
  \end{equation*}
  Now, let $s=\sigma(f)$. Then
  \begin{align*}
    e^\#(\tau(\sigma(f)))
    & = e^\#(\sigma(\tau\cdot f))
    &\text{def.~of $\tau = \bar \tau$}
    \\
    & = \sigma_A(e^\#\cdot\tau\cdot f)
    &\text{def.~of $e^\#$}
    \\
    & = \sigma_A(e_\tau^\#\cdot f)
    &\text{induction hypothesis}\\
    & =e_\tau^\#(\sigma(f)) &
    \text{def.~of $e^\#_\tau$}. &\qedhere
  \end{align*}
\end{proof}
} 

\begin{notn}
  Let $A$ be a $\Sigma$-algebra, let $\tau\colon X\to\TSigma(Y)$ be a
  substitution, and let $e\colon X\to A$ be an assignment such that
  $e^\#\cdot\tau(x)$ is defined for all~$x\in X$. We define
  \[
    e_{\tau} = \big(X\xra{\tau}\TSigma(Y)\xra{e^\#} A\big).
  \]
\end{notn}

\begin{lem}[Substitution lemma]\label{L:subs}
  Let $A$ be a $\Sigma$-algebra in $\V$, let
  $\tau\colon X\to\TSigma(Y)$ be a substitution, and let
  $e\colon Y\to A$ be an assignment such that $e^\#(\tau(x))$ is
  defined for all $x\in X$. Then, for all $t\in\TSigma(X)$:
  \[
    \text{$e^\#(\overline{\tau}(t))$ is defined}
    \qquad\text{iff}\qquad
    \text{$e_{\tau}^\#(t)$ is defined;}
  \]
  in this case, $e^\#(\overline{\tau}(t))=e_{\tau}^\#(t)$.
\end{lem}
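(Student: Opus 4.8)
The plan is to prove both halves of the statement -- the definedness biconditional and the equation $e^\#(\overline\tau(t))=e_\tau^\#(t)$ in the case where both sides are defined -- simultaneously by structural induction on $t\in\TSigma(X)$, formulating the induction hypothesis strongly enough that it can be invoked for all proper subterms of $t$. The observation that makes the induction run is that $\overline\tau$ commutes with term formation, i.e.\ $\overline\tau(\sigma(f))=\sigma(\overline\tau\cdot f)$ by \autoref{def:subst}, so that the two evaluation processes $e^\#\cdot\overline\tau$ and $e_\tau^\#$ recurse over the same term tree and differ only at the variables, where $e^\#(\overline\tau(x))=e^\#(\tau(x))=e_\tau(x)=e_\tau^\#(x)$ by the definition of $e_\tau$ and the standing hypothesis that $e^\#(\tau(x))$ is defined for every $x\in X$. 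This handles the base case.

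For the inductive step I would take $t=\sigma(f)$ with $\sigma\in\Sigma$ and $f\colon\arity(\sigma)\to\TSigma(X)$ and unfold \autoref{D:evaluation} on both sides: $e^\#(\sigma(\overline\tau\cdot f))$ is defined exactly when (i) $e^\#(\overline\tau(f(i)))$ is defined for all $i\in\arity(\sigma)$ and (ii) $A\models\alpha(e^\#\cdot\overline\tau\cdot f\cdot g)$ for every edge $\alpha(g)$ of $\arity(\sigma)$, while $e_\tau^\#(\sigma(f))$ is defined exactly when (i$'$) $e_\tau^\#(f(i))$ is defined for all $i$ and (ii$'$) $A\models\alpha(e_\tau^\#\cdot f\cdot g)$ for every such $\alpha(g)$. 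The induction hypothesis applied to the subterms $f(i)$ yields (i)$\Leftrightarrow$(i$'$) and, whenever this common condition holds, the equality of maps $e^\#\cdot\overline\tau\cdot f=e_\tau^\#\cdot f\colon\arity(\sigma)\to A$; consequently (ii) and (ii$'$) are literally the same condition, so the two definedness statements coincide (and if (i$'$) fails, so does (i) by the hypothesis, and both sides are undefined, so the biconditional holds vacuously). When both sides are defined, the equation follows from $e^\#(\sigma(\overline\tau\cdot f))=\sigma_A(e^\#\cdot\overline\tau\cdot f)=\sigma_A(e_\tau^\#\cdot f)=e_\tau^\#(\sigma(f))$, again using the equality of evaluated tuples just obtained.

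I expect no real obstacle beyond careful bookkeeping. The two points that need attention are (a) stating the induction hypothesis so that it produces not merely equi-definedness but the pointwise equality of the evaluated tuples $e^\#\cdot\overline\tau\cdot f$ and $e_\tau^\#\cdot f$, since it is this equality that transports the relational side-conditions (ii)/(ii$'$) between the two sides, and (b) treating the definedness biconditional in both directions uniformly rather than arguing the ``defined'' and ``undefined'' cases asymmetrically. I note in passing that the hypothesis $A\in\V$ is not actually used -- evaluation of terms depends only on the $\Sigma$-algebra structure of $A$, so the argument is valid for an arbitrary $\Sigma$-algebra; the hypothesis is retained only to match the form in which the lemma is applied.
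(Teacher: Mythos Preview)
Your proposal is correct and follows essentially the same approach as the paper's proof: structural induction on~$t$, with the base case handled by the definition of~$e_\tau$ and the inductive step by unfolding \autoref{D:evaluation} on both sides, transporting the relational side-conditions via the pointwise equality $e^\#\cdot\overline\tau\cdot f=e_\tau^\#\cdot f$ supplied by the induction hypothesis. Your side remark that the hypothesis $A\in\V$ is unused is also accurate; the paper's proof never invokes it.
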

\begin{proof}
  By induction on terms. If $t=x\in X$, then
  \begin{align*}
    e^\#\cdot\overline{\tau}(t)
    &= e^\#\cdot\tau(x)
    &\text{def.~of $\overline\tau$} \\
    &=e_{\tau}(x)
    &\text{def.~of $e_{\tau}$} \\
    &=e_{\tau}^\#(t)
    &\text{def.~of $e_\tau^\#$},
  \end{align*}
  with the reading of the equational steps including that the left
  side is defined iff the right side is.

  Next, suppose that $t=\sigma(f)$ for some $\sigma\in\Sigma$ and some
  map $f\colon|\arity(\sigma)|\to\TSigma(X)$. We want to show that
  $e^\#(\overline{\tau}(\sigma(f))) = e^\#(\sigma(\overline{\tau}\cdot
  f))$ is defined iff $e_{\tau}^\#(\sigma(f))$ is defined.

  Observe that if $e^\#(\sigma(\overline{\tau}\cdot f))$ is defined,
  then by definition of $e^\#$ we have
  \begin{enumerate}
  \item $e^\#(\overline{\tau}(f(i)))$ is defined for all $i\in\arity(\sigma)$; 
  \item $A\models\alpha(e^\#\cdot\overline{\tau}\cdot (f\cdot g))$ for
    all $\alpha(g)\in\arity(\sigma)$.
  \end{enumerate}
  Thus, by the inductive hypothesis, we have that $e^\#_{\tau}(f(i))$
  is defined for all $i\in\arity(\sigma)$, and
  $A\models\alpha(e_{\tau}^\#\cdot (f\cdot g))$ for all
  $\alpha(g)\in\arity(\sigma)$.
  Thus, $e_{\tau}^\#(\sigma(f))$ is defined. The converse direction
  follows completely analogously (indeed, simply note that each
  implication is reversible). We conclude the proof with a computation
  in which all expressions are equi-defined:
  \begin{align*}
    e^\#(\overline{\tau}(\sigma(f)))
    &= e^\#(\sigma(\overline{\tau}\cdot f))			
    &\text{def.~of $\overline{\tau}$} 
    \\
    &= \sigma_A(e^\#\cdot\overline{\tau}\cdot f)         
    &\text{def.~of $e^\#$}
    \\
    &= \sigma_A(e_{\tau}^\#\cdot f)				
    &\text{induction hypothesis} 
    \\
    &= e_{\tau}^\#(\sigma(f))					
    &\text{def.~of $e_{\tau}^\#$} \tag*{\qedhere}
  \end{align*}
\end{proof}
\begin{cor}\label{C:subs}
  Let $A, \tau,$ and $e$ be as in \autoref{L:subs} above. Then, for all
  $\alpha\in\Rels$ and all maps $f\colon|\arity(\alpha)|\to\TSigma(X)$,
  \[ 
    A\models\alpha(e^\#\cdot\overline{\tau}\cdot f)
    \qquad\text{iff}\qquad
    A\models\alpha(e_{\tau}^\#\cdot f).
  \]
\end{cor}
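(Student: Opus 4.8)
The plan is to reduce the corollary directly to the Substitution Lemma (\autoref{L:subs}). I would fix $\alpha\in\Rels$ and a map $f\colon|\arity(\alpha)|\to\TSigma(X)$, and apply \autoref{L:subs} separately to each term $t=f(i)\in\TSigma(X)$ for $i\in\arity(\alpha)$. This yields, for each such $i$, that $e^\#(\overline{\tau}(f(i)))$ is defined if and only if $e_{\tau}^\#(f(i))$ is defined, and that whenever both are defined they coincide. Hence the partial assignment $e^\#\cdot\overline{\tau}\cdot f$ is total on $\arity(\alpha)$ exactly when $e_{\tau}^\#\cdot f$ is, and when they are both total they are equal as maps $\arity(\alpha)\to A$.

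It then remains only to invoke the convention (consistent with \autoref{D:evaluation} and \autoref{D:relational-theory}) that writing $A\models\alpha(g)$ for a partial assignment $g\colon\arity(\alpha)\to A$ carries the implicit requirement that $g$ be defined on all of $\arity(\alpha)$, so that $\alpha_A$ depends only on the resulting total map. Given this, if $A\models\alpha(e^\#\cdot\overline{\tau}\cdot f)$ then $e^\#\cdot\overline{\tau}\cdot f$ is total, so by the previous paragraph $e_{\tau}^\#\cdot f$ is total and the two maps agree; hence $A\models\alpha(e_{\tau}^\#\cdot f)$. The converse direction is entirely symmetric, which establishes the claimed equivalence.

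I do not expect any real obstacle: the corollary is a pointwise application of \autoref{L:subs} ranging over the (finite) arity of $\alpha$. The only point requiring a little care is the bookkeeping of partiality, namely making explicit that definedness transfers uniformly in $i$ before one can speak of the edge $\alpha$ holding at all; but this is precisely the equi-definedness assertion already contained in the statement of \autoref{L:subs}, so it comes for free.
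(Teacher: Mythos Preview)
Your proposal is correct and is exactly the intended argument: the paper states this as an immediate corollary of \autoref{L:subs} without separate proof, and your pointwise application over $i\in\arity(\alpha)$, together with the observation that $A\models\alpha(-)$ presupposes totality of the argument tuple, is precisely how it follows.
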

\begin{proof}[Proof of \autoref{P:complete}]
  \textbf{{Soundness}} ($\Rightarrow$): Let $A$ be an algebra in $\V$
  and let $e\colon X\to A$ be a relation preserving assignment. We are
  going to show by simultaneous induction on derivations that
  \begin{enumerate}
  \item if $X\vdash\alpha(f)$ is derivable, then $e^\#\cdot f(i)$
    is defined for all $i\in\arity(\alpha)$, and
    \(
    A\models\alpha(e^\#\cdot f);
    \)
  \item if $X\vdash\isdef t$ is derivable, then $e^\#(t)$ is
    defined.
  \end{enumerate}
  For the base case of our inductive proof, first observe that if
  $X\vdash\alpha(f)$ is derivable via $(\mathsf{Ctx})$, then each
  $f(i)$ is a variable, so $e^\#\cdot f(i)=e\cdot f(i)$ is defined,
  and moreover $X\models\alpha(f)$, so $A\models\alpha(e\cdot f)$
  since $e\colon X\to A$ is relation preserving. Second, if
  $X\vdash\isdef t$ is derivable via $(\mathsf{Var})$, then $t$ is a
  variable in $X$ so that $e^\#(t)=e(t)$ is defined. We proceed by
  case distinction on the last rule applied in the
  derivation.%
  \smnote{I like the following much better as an enumerate; if we want to
    change the formatting we can easily do so by changing its settings.}

  \begin{enumerate}
\item[$(\mathsf{RelAx})$:] this follows from the fact that the given
  algebra $A$ is carried by an $\Horn$-model, in combination with
  \autoref{L:subs} and \autoref{C:subs}.%
  \smnote{Sorry, but I do not agree that it is 'immediate'; and by
    this terse style we missed that \autoref{L:subs} must also be used.}
  
  %

\item[$(\EAr)$:] Suppose that $X\vdash\isdef t$ is derived via the
  rule $(\EAr)$ so that $t = \sigma(h)$ for some $\sigma\in\Sigma$ and
  some map $h\colon\arity(\sigma)\to\TSigma(X)$. Then, by induction,
  we are given that $e^\#(h(i))$ is defined for all
  $i\in\arity(\sigma)$, and $A\models\alpha(e^\#\cdot(h\cdot g))$ for
  all $\alpha(g)\in\arity(\sigma)$.  It follows immediately from the
  definition of $e^\#$ that $e^\#(\sigma(h))$ is defined, as
  required.%
  \smnote{Why not $f$ in lieu of $h$ so that it matches precisely
    what's writen in the rule?}

\item[$(\mathsf{Mor})$:] Suppose that $X\vdash\alpha(f)$ is derived
  using the rule $(\mathsf{Mor})$. Then, for some $\sigma\in\Sigma$,
  we have that $f(i)= \sigma(g_i)$ for all~$i$, where
  $g_i\colon\arity(\sigma)\to\TSigma(X)$, and the premisses
  \[
    X\vdash\alpha(g_i(j))~(j\in\arity(\sigma))
    \qquad\text{and}\qquad
    X\vdash\isdef\sigma(g_i)~(i\in\arity(\alpha))
  \]
  of $(\mathsf{Mor})$ are derivable.  Then, by induction,
  $e^\#(\sigma(g_i))$ is defined for all $i\in\arity(\alpha)$.  By
  definition of $e^\#$, this means that
  $A\models\beta(e^\#\cdot (g_i\cdot k))$ for all
  $\beta(k)\in\arity(\sigma)$; that is,
  $e^\#\cdot g_i\in[\arity(\sigma), A]$ for all $i\in\arity(\alpha)$.
  Moreover, by the induction hypothesis applied to the first item, we
  have that $A\models\alpha(e^\#\cdot g_i(j))$ for all
  $j\in\arity(\sigma)$. In other words,
  $A\models\alpha(\pi_j\cdot (e^\#\cdot g_i))$ for all
  $j\in\arity(\sigma)$ (recall that
  $\pi_j(e^\#\cdot g_i) = e^\#\cdot g_i(j)$); that is,
  $[\arity(\sigma), A]\models\alpha(e^\#\cdot g_i)$. Now, using that
  $\sigma_A$ is a relation-preserving map $[\arity(\sigma), A]\to A$,
  it follows that $A\models\alpha(\sigma_A(e^\#\cdot g_i))$. But
  $\sigma_A(e^\#\cdot g_i)=e^\#(\sigma(g_i)) =e^\#\cdot f(i)$ for all
  $i\in\arity(\alpha)$, so we have that $A\models\alpha(e^\#\cdot f)$,
  as desired.

\item[$(\mathsf{Ax})$:] Now suppose that $X\vdash\alpha(f)$ is
  derivable via $(\mathsf{Ax})$ so that for some axiom
  $\Delta\vdash\alpha(h)$, where
  $h\colon\arity(\alpha)\to\TSigma(\Delta)$, and some substitution
  $\tau\colon\Delta\to\TSigma(X)$ we have $f=\tau\cdot h$.  The
  premisses of this rule have the following shape:
  \begin{enumerate}[label=(\arabic*)]
  \item
    $X\vdash\isdef\tau(y)$ for all $y\in\Delta$;
  \item
    $X\vdash\beta(\tau\cdot g)$ for all $\beta(g)\in\Delta$.
  \end{enumerate}
  By the inductive hypothesis, derivability of these premisses implies
  that $e^\#\cdot\tau(y)$ is defined for all $y\in\Delta$, and the
  assignment $e_{\tau}=e^\#\cdot\tau\colon\Delta\to A$ is
  relation-preserving. Since $A$, being in~$\V$, satisfies
  $\Delta\vdash\alpha(h)$, it follows that
  $A\models\alpha(e_\tau^\#\cdot h)$. By \autoref{C:subs}, we obtain
  $A\models\alpha(e^\#\cdot\tau\cdot h)$,
  i.e.~$A\models\alpha(e^\#\cdot f)$ as required.

\item[$(\IAr)$:] Finally, assume that $X\vdash\alpha(f)$ is derived
  using the rule $(\IAr)$. Then, for some substitution
  $\tau\colon\Delta\to\TSigma(X)$ and some $\V$-axiom
  $\Delta\vdash\beta(g)$, there exists $\sigma(h)\in\sub(g)$ and
  $\alpha(k)\in\arity(\sigma)$ such that%
  \smnote{What about $\arity(\sigma) \models \alpha(k)$ which is part
    of the side condition of the rule?}
  \[
    f=\arity(\alpha)\xra{k}\arity(\sigma)\xra{h}\TSigma(\Delta)\xra{\tau}\TSigma(X),
  \]
  and the premisses
  \begin{enumerate}[label=(\arabic*)]
  \item $X\vdash\gamma(\tau\cdot c)$ for all $\gamma(c)\in\Delta$;
  \item $X\vdash\isdef\tau(y)$ for all $y\in\Delta$
  \end{enumerate}
  of $(\IAr)$ are derivable.  Just as for the rule $(\mathsf{Ax})$,
  the induction hypothesis implies that $e^\#\cdot\tau(x)$ is defined
  for all $x\in X$, and $e_{\tau}=e^\#\cdot\tau\colon X\to A$ is a
  relation-preserving assignment.  Since $A$ lies in $\V$, we know
  that $A$ satisfies $\Delta\vdash\beta(g)$ so that
  $e_{\tau}^\#(g(i))$ is defined for all $i\in\arity(\beta)$ and
  $A\models\beta(e_{\tau}^\#\cdot g)$. Since $\sigma(h)\in\sub(g(i))$
  for some $i\in\arity(\beta)$, it follows from a routine induction on
  terms that $e_{\tau}^\#(\sigma(h))$ is defined (i.e.~we use that
  definedness is inherited by
  subterms).  
  Unwinding the definition of $e_{\tau}^\#$, we obtain that
  $e_{\tau}^\#(h(i))$ is defined for all $i\in\arity(\sigma)$, and
  $A\models\gamma(e_{\tau}^\#\cdot h\cdot c)$ for all
  $\gamma(c)\in\arity(\sigma)$. In particular,
  $A\models\alpha(e^\#_{\tau}\cdot h\cdot k)$. By~\autoref{C:subs},
  this implies that $A\models\alpha(e^\#\cdot \tau\cdot h\cdot k)$;
  since $f= \tau\cdot h\cdot k$, this means
  that~$A\models\alpha(e^\#\cdot f)$, as desired.
\end{enumerate}

\textbf{Completeness} ($\Leftarrow$):
Suppose that every algebra in $\V$ satisfies $X\vdash\alpha(f)$. 
Then, in particular, $\F X$ satisfies $X\vdash\alpha(f)$. Hence,
for the relation preserving assignment 
$\eta_X\colon X\to\TSigma(X)$ we have that $\eta^\#_X(f(i))$
is defined for all $i\in\arity(\alpha)$, and 
$\F X\models\alpha(\eta^\#\cdot f)$. By~\autoref{L:complete},
we have that $\eta^\#_X(f(i)) = [f(i)]$ for all $i\in\arity(\alpha)$.
In particular, $f$ factorizes as a map 
$f\colon\arity(\alpha)\to\Term_{\V}(X)\hookrightarrow\TSigma(X)$, 
and $\Term_{\V}(X)\models\alpha(f)$. That is,
$X\vdash\alpha(f)$ is derivable.
\end{proof}
\section{Details for \autoref{S:monad-theory}}
\begin{proof}[Proof of \autoref{L:TX-in-VT}]
  Let $X\in\BC$; we first show, for a given operation symbol $\sigma$ of arity $\Gamma$, 
  that $\sigma_{TX}\colon [\Gamma, TX] \to TX$
  is relation preserving. Indeed, we have
  \[
    \sigma_{TX} = \big(
    [\Gamma, TX]
    \xra{(-)^*}
    [T\Gamma, TX]
    \xra{\pi_x}
    TX
    \big),
  \]
  where $(-)^*$ is the Kleisli extension of the monad $T$ and
  $\pi_x(g) = g(x)$ (cf.~\autoref{D:internalhom}) is the evaluation
  map for $x = \sigma = \sigma(u_{\Gamma})$
  (\autoref{N:operation}). The Kleisli extension $(-)^*$ is relation
  preserving since $T$ is an enriched monad, and $\pi_x$ clearly is
  relation preserving by the definition of the relational structure on
  $[T\Gamma, TX]$. Thus $\sigma_{TX}$ is relation preserving.
  
  Now we are going to verify that $TX$ lies in
  $\V_{\bbT}$. First, observe that for all $\sigma\in\Sigma_{\bbT}$
  and all relation preserving assignments
  $f\colon\arity(\sigma)\to TX$, we have
  \begin{equation}\label{P:canonical-eq}
    f^\#(\sigma) = f^*(\sigma).			
  \end{equation}
  (On the left-hand side of the equation above, we view $\sigma$ as a
  term according to~\autoref{N:operation}.)  Indeed, where
  $|\arity(\sigma)|=\{x_i \mid i\in\arity(\sigma)\}$, we see that
  $f^\#(\sigma)$ is defined using that $f$ is relation preserving: for
  every $\Rels$-edge $\alpha(g)$ in $\arity(\sigma)$ we have
  $\alpha_{TX}(f^\# \cdot (u_{\arity(\sigma)}) \cdot g) =
  \alpha_{TX}(f \cdot g)$. Furthermore, using only the definitions of
  $f^\#$ and of $\sigma_{TX}$, we have
  \[
    f^\#(\sigma)
    =
    f^\#(\sigma(x_i))
    =
    \sigma_{TX}(f^\#(x_i))
    = \sigma_{TX}(f(x_i))
    = \sigma_{TX}(f)
    = f^*(\sigma).
  \]
  
  We will now see that $TX$ satisfies all axioms of $\V_{\bbT}$, as
  required. To this end, we make a case distinction on the axiom types
  described in \autoref{D:inducedtheory}:

  \begin{enumerate}
  \item 
    Given a $\Sigma$-relation $\Gamma\vdash\alpha(\sigma_i)$ such that
    $T\Gamma\models\alpha(\sigma_i)$ and a relation preserving map
    $f\colon\Gamma\to TX$, we first note that
    $TX\models\alpha(f^\#(\sigma_i))$ since $f^*\colon T\Gamma\to TX$
    is relation preserving and $f^*(\sigma_i)=f^\#(\sigma_i)$ for all
    $i\in\arity(\alpha)$ by~\eqref{P:canonical-eq}. Thus, to see that
    $TX$ satisfies $\Gamma\vdash\alpha(\sigma_i)$, it suffices to show
    that $f^\#(\sigma_i)$ is defined for all $i\in\arity(\sigma)$.
    This follows immediately from~\eqref{P:canonical-eq}.

  \item Now, given $\Delta\in\pres_{\lambda}$, a morphism
    $f\colon\Delta\to T\Gamma$, and an operation $\sigma\in T\Delta$,
    we verify that $TX$ satisfies
    $\Gamma\vdash f^*(\sigma)=\sigma(f)$.  To this end, let
    $m\colon\Gamma\to TX$ be a relation preserving assignment. Then
    $m^\#(f^*(\sigma))$ is defined; this is shown completely
    analogously as before.
    Furthermore, we have
    \begin{align*}
      m^\#(f^*(\sigma))
      &= m^*\cdot f^*(\sigma)  & \text{by~\eqref{P:canonical-eq}} \\
      &= (m^*\cdot f)^*(\sigma)& \text{by~\eqref{eq:kleisli-laws}} \\
      &= \sigma_{TX}(m^*\cdot f)&\text{def.~of $\sigma_{TX}$} \\
      &= \sigma_{TX}(m^\#\cdot f)&\text{by~\eqref{P:canonical-eq}} \\
      &= m^\#(\sigma(f))&\text{def.~of $m^\#$.} 
    \end{align*}

  \item Given $f\colon \Gamma \to TX$ relation preserving, we
    use~\eqref{P:canonical-eq} to obtain
    \[
      f^\#(\eta_\Gamma(x)) = f^*(\eta_\Gamma(x)) = f(x) =
      f^\#(x).
      \tag*{\qedhere}
    \]
  \end{enumerate}
\end{proof}

\begin{proof}[Proof of \autoref{T:monad-theory}]
  We first show that every canonical algebra
  $TX$ has the universal property of a free algebra in
  $\V_{\bbT}$ with respect to the monad unit $\eta_X\colon X\to
  TX$.  We split this into two parts:
  \begin{enumerate}
  \item Assume that
    $X=\Gamma\in\pres_{\lambda}$ and let $f\colon\Gamma\to
    A$ be a relation preserving map; we proceed to show that there
    exists a unique homomorphism $\bar{f}\colon T\Gamma\to
    A$ such that
    $f=\bar{f}\cdot\eta_{\Gamma}.$ To this end, let
    $\bar{f}$ be the map defined by
    \[
      \bar{f}(\sigma):=\sigma_A(f)
    \]
    for all $\sigma\in T\Gamma$. We first prove that $\bar f \cdot
    \eta_\Gamma = f$; for every $x \in \Gamma$ we have
    \begin{align*}
      \bar{f}\cdot\eta_{\Gamma}(x)
      &=
      \bar{f}(\eta_{\Gamma}(x))
      \\
      &=
      (\eta_{\Gamma}(x))_A(f)
      &
      \text{def.~of $\bar f$} \\
      &=
      (\eta_{\Gamma}(x))_A(f^\# \cdot u_\Gamma)
      &
      \text{def.~of $f^\#$}
      \\
      &=
      f^\#\big((\eta_\Gamma(x))(u_\Gamma)\big)
      &
      \text{def.~of $f^\#$} \\
      &=
      f^\#(x)
      &
      \text{since $A$ satisfies $\Gamma \vdash \eta_\Gamma(x) = x$}
      \\
      &=
      f(x)
      &\text{def.~of $f^\#$}.
    \end{align*}
    
    Next we prove that
    $\bar{f}$ is relation preserving. Indeed, given an edge
    $\alpha(\sigma_i)$ in $T\Gamma$, we have that
    $A$ satisfies
    $\Gamma\vdash\alpha(\sigma_i)$ since it is an algebra in
    $\V_{\bbT}$. In particular,
    $A\models\alpha_A(f^\#(\sigma_i))$. Moreover, we have
    \[
      f^\#(\sigma_i)
      =
      f^\#(\sigma_i(x_i))
      =
      (\sigma_i)_A(f(x_i))
      =
      \bar{f}(\sigma_i)
    \]
    for all $i\in\arity(\alpha)$. Hence  
    $A\models\alpha(\bar{f}(\sigma_i)).$ 

    Now we show that $\bar{f}$ is a homomorphism.  To this end, let
    $\tau\in\Sigma_{\bbT}$. We are going to show that the following
    square commutes
    \[
      \begin{tikzcd}[column sep = 40]
        {[\arity(\tau), T\Gamma]}
        \ar{r}{\tau_{T\Gamma}}
        \ar{d}[swap]{\bar f \cdot (-)}
        &
        T\Gamma
        \ar{d}{\bar f}
        \\
        {[\arity(\tau), A]}
        \ar{r}{\tau_A}
        &
        A        
      \end{tikzcd}
    \]
    Indeed, for every relation preserving $m\colon\arity(\tau)\to T\Gamma$ we have
    \begin{align*}
      \bar{f}(\tau_{T\Gamma}(m))
      &= \bar{f}(m^*(\tau))	& \text{def.~of $\tau_{T\Gamma}$} \\
      &= (m^*(\tau))_A(f)	&\text{def. of $\bar{f}$} \\
      &=  f^\#(m^*(\tau))	& \text{def. of $f^\#$} \\
      &=  f^\#(\tau(m))	& \text{A satisfies $\Gamma\vdash m^*(\tau)=\tau(m)$} \\
      &=  \tau_A(f^\#\cdot m)	& \text{def. of $f^\#$} \\
      &= \tau_A(\bar{f}\cdot m).
    \end{align*}
    For the last step we shall prove that
    $f^\# (m(x)) = \bar f (m(x))$ for every $x \in
    \arity(\tau)$. Indeed, using the definition of $f^\#$ we see that
    the operation symbol $\sigma = m(x)$, considered as the
    term~$\sigma(u_\Gamma)$ as in \autoref{N:operation} satisfies
    \[
      f^\#(\sigma(u_\Gamma))
      =
      \sigma_A(f^\#\cdot u_\Gamma)
      =
      \sigma_A(f)
      =
      \bar f(\sigma).
    \]

    As for the uniqueness, suppose that $\bar f\colon T\Gamma \to A$
    is a homomorphism such that $\bar f \cdot \eta_\Gamma = f$. Then
    the above square commutes for $\arity(\tau) = \Gamma$ which
    applied to $\eta_\Gamma \in [\Gamma, T\Gamma]$ yields for every
    $\sigma \in |T\Gamma|$:
    \begin{align*}
      \bar f(\sigma)
      &= \bar f (\eta^*_\Gamma(\sigma))
      & \text{by~\eqref{eq:kleisli-laws}}
      \\
      &= \bar f(\eta^\#_\Gamma(\sigma))
      & \text{by~\eqref{P:canonical-eq}}
      \\
      &=
      \bar f(\sigma_{T\Gamma}(\eta_\Gamma))
      &\text{def.~of $\eta_\Gamma$}
      \\
      &=\sigma_A(\bar f \cdot \eta_\Gamma)
      &\text{$\bar f$ homomorphism}
      \\
      &= \sigma_A(f)
      & \text{since $\bar f \cdot \eta_\Gamma = f$}.
    \end{align*}
    
  \item Now suppose that $X$ is an arbitrary object in $\BC$.  Using
    that $\BC$ is locally $\lambda$-presentable, we may express $X$ as
    a $\lambda$-directed colimit of $\lambda$-presentable objects, say
    $X=\colim\Gamma_i$.  Then, since $T$ is $\lambda$-accessible, we
    see that $TX=\colim T\Gamma_i$, and this lifts to a colimit in
    $\V_{\bbT}$ because the forgetful functor $\V_{\bbT}\to\BC_0$
    creates $\lambda$-directed colimits by \autoref{P:creation} and
    \autoref{rem:alg-sigma-props}\ref{item:alg-sigma-accessble}.

  \item To complete the proof we apply \autoref{R:kleisli-mnd}. The
    given monad $\bbT$ and the free algebra monad of $\V_\bbT$ share
    the same object assignment $X \mapsto TX$, and the family of
    morphisms $\eta_X$, as shown in the previous two items. It remains
    to prove that for every morphism $h\colon X \to TY$ the morphism
    $h^*\colon TX \to TY$ is a $\Sigma$-homomorphism. Then $\bbT$ and
    the free algebra monad of $\V_\bbT$ also share the same operator
    $h \mapsto h^*$. Given $\sigma \in |T\Gamma|$ we shall prove that
    the following square commutes:
    \[
      \begin{tikzcd}[column sep = 40]
        {[\Gamma, TX]}
        \ar{r}{\sigma_{TX}}
        \ar{d}[swap]{h^* \cdot (-)}
        &
        TX
        \ar{d}{h^*}
        \\
        {[\Gamma, TY]}
        \ar{r}{\sigma_{TY}}
        &
        TY
      \end{tikzcd}
    \]
    Indeed, given $f\colon \Gamma  \to TY$ we have
    \begin{align*}
      h^* \cdot \sigma_{TX}(f) &= h^* \cdot f^*(\sigma)
      & \text{def.~of $\sigma_{TX}$}
      \\
      &= (h^* \cdot f)^*(\sigma)
      & \text{by~\eqref{eq:kleisli-laws}}
      \\
      &=\sigma_{TY}(h^* \cdot f)
      & \text{def.~of $\sigma_{TY}$}.
      \tag*{\qedhere}
    \end{align*}
  \end{enumerate}
\end{proof}

\end{document}